\newcommand{\Title}[1]{\bigskip\bigskip\centerline{\bf #1}\bigskip}
\newcommand{\Author}[1]{\medskip\centerline{ \it #1}}
\newcommand{\Affiliation}[1]{\medskip\centerline{#1}}
\newcommand{\Email}[1]{\medskip\centerline{#1}\bigskip}
\begin{document}

\newcommand{\N}{\mbox {$\mathbb N $}}
\newcommand{\Z}{\mbox {$\mathbb Z $}}
\newcommand{\Q}{\mbox {$\mathbb Q $}}
\newcommand{\R}{\mbox {$\mathbb R $}}
\newcommand{\lo }{\longrightarrow }
\newcommand{\ul}{\underleftarrow }
\newcommand{\rl}{\underrightarrow }
\newcommand{\rs }{\rightsquigarrow }
\newcommand{\ra }{\rightarrow }
\newcommand{\dd }{\rightsquigarrow }
\newcommand{\ol }{\overline }
\newcommand{\la }{\langle }
\newcommand{\tr }{\triangle }
\newcommand{\xr }{\xrightarrow }
\newcommand{\de }{\delta }
\newcommand{\pa }{\partial }
\newcommand{\LR }{\Longleftrightarrow }
\newcommand{\Ri }{\Rightarrow }
\newcommand{\va }{\varphi }
\newcommand{\Den}{{\rm Den}\,}
\newcommand{\Ker}{{\rm Ker}\,}
\newcommand{\Reg}{{\rm Reg}\,}
\newcommand{\Fix}{{\rm Fix}\,}
\newcommand{\Sup}{{\rm Sup}\,}
\newcommand{\Inf}{{\rm Inf}\,}
\newcommand{\Img}{{\rm Im}\,}
\newcommand{\Id}{{\rm Id}\,}
\newcommand{\ord}{{\rm ord}\,}

\newtheorem{theorem}{Theorem}[section]
\newtheorem{lemma}[theorem]{Lemma}
\newtheorem{proposition}[theorem]{Proposition}
\newtheorem{corollary}[theorem]{Corollary}
\newtheorem{definition}[theorem]{Definition}
\newtheorem{example}[theorem]{Example}
\newtheorem{examples}[theorem]{Examples}
\newtheorem{xca}[theorem]{Exercise}
\theoremstyle{remark}
\newtheorem{remark}[theorem]{Remark}
\newtheorem{remarks}[theorem]{Remarks}
\numberwithin{equation}{section}

\def\leftmark{L.C. Ciungu}

\Title{GENERALIZATIONS OF QUANTUM-WAJSBERG ALGEBRAS} 
\title[Generalizations of quantum-Wajsberg algebras]{}
                                                                           
\Author{\textbf{LAVINIA CORINA CIUNGU}}
\Affiliation{Department of Mathematics} 
\Affiliation{St Francis College}
\Affiliation{179 Livingston Street, Brooklyn, NY 11201, USA}
\Email{lciungu@sfc.edu}

\begin{abstract} 
Starting from involutive BE algebras, we redefine the pre-MV and meta-MV algebras, by introducing the notion of pre-Wajsberg and meta-Wajsberg algebras, as generalizations of quantum-Wajsberg algebras. 
We characterize these algebras, we show that any pre-Wajsberg algebra is a meta-Wajsberg algebra, and  
we prove that an implicative-orthomodular algebra is a quantum-Wajsberg algebra if and only if it is a meta-Wajsberg algebra. 
It is shown that the commutative quantum-Wajsberg (implicative-orthomodular, pre-Wajsberg, meta-Wajsberg)
algebras coincide with the Wajsberg algebras. 
We give conditions for the implicative-orthomodular, pre-Wajsberg and meta-Wajsberg algebras to be Wajsberg algebras.  The commutative center of an implicative-orthomodular algebra is defined and studied, and it is proved that 
it is a Wajsberg subalgebra of the implicative-orthomodular algebra. \\

\textbf{Keywords:} {quantum-Wajsberg algebra, implicative-orthomodular algebra, pre-Wajsberg algebra, meta-Wajsberg algebra, commutative center, Wajsberg center} \\
\textbf{AMS classification (2020):} 06F35, 03G25, 06A06, 81P10, 06C15
\end{abstract}

\maketitle

\section{Introduction} 

In the last decades, the study of algebraic structures related to the logical foundations of quantum mechanics 
became a central topic of research. Generally known as quantum structures, these algebras serve as algebraic 
semantics for the classical and non-classical logics, as well as for the quantum logics. 
As algebraic structures connected with quantum logics we mention the following algebras: bounded involutive lattices,  De Morgan algebras, ortholattices, orthomodular lattices, MV algebras, quantum-MV algebras, orthomodular algebras. \\
The quantum-MV algebras (or QMV algebras) were introduced by R. Giuntini in \cite{Giunt1} 
as non-lattice generalizations of MV algebras (\cite{Chang, Cig1}) and as non-idempotent generalizations of orthomodular lattices (\cite{Beran, Kalm1}). 
These structures were intensively studied by R. Giuntini (\cite{Giunt2, Giunt3, Giunt4, Giunt5, Giunt6}), 
A. Dvure\v censkij and S. Pulmannov\'a (\cite{DvPu}), R. Giuntini and S. Pulmannov\'a (\cite{Giunt7}) and by 
A. Iorgulescu in \cite{Ior30, Ior31, Ior32, Ior33, Ior34, Ior35}. 
An extensive study on the orthomodular structures as quantum logics can be found in \cite{Ptak}. \\ 
Starting from the M algebras (RM algebras, ..., BE algebras, ..., BCK algebras) in the ``world" of 
algebras of logic, A. Iorgulescu introduced in \cite{Ior30} new commutative unital magmas with additional
operations (m-RM algebras, ..., m-BE algebras , ... m-BCK algebras), and she established connections between the
new algebras and the MV algebras, the ortholattices and the Boolean algebras (\cite{Ior30, Ior31, Ior34, Ior35}). 
Based on the involutive m-BE algebras, A. Iorgulescu redefined the quantum-MV algebras, and she generalized 
these algebras, by introducing three new algebras: orthomodular, pre-MV and meta-MV algebras. 
The implicative-ortholattices (also particular involutive BE algebras, introduced and studied in 
\cite{Ior30, Ior35}) are term-equivalent to ortholattices (redefined as particular involutive m-BE algebras in \cite{Ior30, Ior35}), and the implicative-Boolean algebras (introduced by A. Iorgulescu in 2009, also as particular involutive BE algebras, namely as particular involutive BCK algebras) are term-equivalent to Boolean algebras (redefined as particular involutive m-BE algebras, namely as particular (involutive) m-BCK algebras in \cite{Ior35}). \\
The connections between algebras of logic/algebras and quantum algebras were clarified by A. Iorgulescu in 
\cite{Ior30, Ior31, Ior35}: it was proved that the quantum algebras belong, in fact, to the ``world" of involutive unital commutative magmas. 
These connections were established by redefining equivalently the bounded involutive 
lattices and De Morgan algebras as involutive m-MEL algebras and the ortholattices, the MV, the quantum MV and 
the Boolean algebras as involutive m-BE algebras, verifying some properties, and then putting all of them on
the involutive ``big map". \\
We redefined in \cite{Ciu78} the quantum-MV algebras as involutive BE algebras, by introducing and studying the 
notion of quantum-Wajsberg algebras. It was proved that the quantum-Wajsberg algebras are equivalent to quantum-MV algebras and that the Wajsberg algebras are both quantum-Wajsberg algebras and commutative quantum-B algebras. 
We also redefined in \cite{Ciu81} the orthomodular algebras, by defining the implicative-orthomodular algebras, 
proving that the quantum-Wajsberg algebras are particular cases of these structures. 
We characterized the implicative-orthomodular algebras and we gave conditions for implicative-orthomodular algebras 
to be quantum-Wajsberg algebras.  
We also introduced and studied the notions of deductive systems in implicative-orthomodular algebras, and we defined  
the quotient implicative-orthomodular algebra with respect to the congruence induced by a deductive system. \\
In this paper, we redefine the pre-MV and the meta-MV algebras, by introducing the notions of pre-Wajsberg and meta-Wajsberg algebras, respectively, proving that these structures are generalizations of quantum-Wajsberg algebras. 
We give characterizations of these algebras, and we show that any pre-Wajsberg algebra is a meta-Wajsberg algebra. 
It is proved that an implicative-orthomodular algebra is a quantum-Wajsberg algebra if and only if it is a meta-Wajsberg algebra. 
If an involutive BE algebra is implicative, we prove that the implicative-orthomodular algebras coincide with the quantum-Wajsberg and the pre-Wajsberg algebras, and that any implicative-orthomodular algebra is a meta-Wajsberg algebra. 
We show that the Wajsberg algebras are implicative-orthomodular, pre-Wajsberg and meta-Wajsberg algebras, and 
that the commutative quantum-Wajsberg (implicative-orthomodular, pre-Wajsberg, meta-Wajsberg) algebras are Wajsberg algebras.  
We also prove that implicative-orthomodular, the pre-MV and the meta-MV algebras are Wajsberg algebras if and only if 
the relation $\le$ is antisymmetric. 
The commutative center of an implicative-orthomodular algebra $X$ is defined as the set of those elements of $X$ that commute with all other elements of $X$. 
We study certain properties of the commutative center and we prove that it is a Wajsberg subalgebra of $X$.

$\vspace*{1mm}$

\section{Preliminaries}

In this section, we recall some basic notions and results regarding BE algebras and m-BE algebras   
that will be used in the paper. Additionally, we prove new properties of involutive BE algebras. \\
\indent
Starting from the systems of positive implicational calculus, weak systems of positive implicational calculus 
and BCI and BCK systems, in 1966 Y. Imai and K. Is\`eki introduced the \emph{BCK algebras} (\cite{Imai}). 
BCK algebras are also used in a dual form, with an implication $\ra$ and with one constant element $1$, 
that is the greatest element (\cite{Kim2}). 
A (dual) BCK algebra is an algebra $(X,\ra,1)$ of type $(2,0)$ satisfying the following conditions, 
for all $x,y,z\in X$: 
$(BCK_1)$ $(x\ra y)\ra ((y\ra z)\ra (x\ra z))=1;$ 
$(BCK_2)$ $1\ra x=x;$ 
$(BCK_3)$ $x\ra 1=1;$ 
$(BCK_4)$ $x\ra y=1$ and $y\ra x=1$ imply $x=y$. 
In this paper, we use the dual BCK algebras. 
If $(X,\ra,1)$ is a BCK algebra, for $x,y\in X$ we define the relation $\le$ by $x\le y$ if and only if $x\ra y=1$, 
and $\le$ is a partial order on $X$. \\
\indent
\emph{Wajsberg algebras} were introduced in 1984 by Font, Rodriguez and Torrens in \cite{Font1} as algebraic model 
of $\aleph_0$-valued \L ukasiewicz logic.   
A \emph{Wajsberg algebra} is an algebra $(X,\ra,^*,1)$ of type $(2,1,0)$ satisfying the following conditions 
for all $x,y,z\in X$: 
$(W_1)$ $1\ra x=x;$ 
$(W_2)$ $(y\ra z)\ra ((z\ra x)\ra (y\ra x))=1;$ 
$(W_3)$ $(x\ra y)\ra y=(y\ra x)\ra x;$ 
$(W_4)$ $(x^*\ra y^*)\ra (y\ra x)=1$. 
Wajsberg algebras are bounded with $0=1^*$, and they are involutive. 
It was proved in \cite{Font1} that Wajsberg algebras are termwise equivalent to MV algebras. \\
\indent
\emph{BE algebras} were introduced in \cite{Kim1} as algebras $(X,\ra,1)$ of type $(2,0)$ satisfying the 
following conditions, for all $x,y,z\in X$: 
$(BE_1)$ $x\ra x=1;$ 
$(BE_2)$ $x\ra 1=1;$ 
$(BE_3)$ $1\ra x=x;$ 
$(BE_4)$ $x\ra (y\ra z)=y\ra (x\ra z)$. 
A relation $\le$ is defined on $X$ by $x\le y$ iff $x\ra y=1$. 
A BE algebra $X$ is \emph{bounded} if there exists $0\in X$ such that $0\le x$, for all $x\in X$. 
In a bounded BE algebra $(X,\ra,0,1)$ we define $x^*=x\ra 0$, for all $x\in X$. 
A bounded BE algebra $X$ is called \emph{involutive} if $x^{**}=x$, for any $x\in X$. \\
\indent
A \emph{suplement algebra} (\emph{S-algebra, for short}) (\cite{DvPu}) is an algebra $(X,\oplus,^*,0,1)$ of type $(2,1,0,0)$   
satisfying the following axioms, for all $x, y, z\in X:$ 
$(S_1)$ $x\oplus y=y\oplus x;$ 
$(S_2)$ $x\oplus (y\oplus z)=(x\oplus y)\oplus z;$ 
$(S_3)$ $x\oplus x^*=1;$ 
$(S_4)$ $x\oplus 0=x;$ 
$(S_5)$ $x^{**}=x;$ 
$(S_6)$ $0^*=1;$ 
$(S_7)$ $x\oplus 1=1$. \\
The following additional operations can be defined in a supplement algebra: \\
$\hspace*{3cm}$ $x\odot y=(x^*\oplus y^*)^*$, 
                $x\Cap_S y=(x\oplus y^*)\odot y$, 
                $x\Cup_S y=(x\odot y^*)\oplus y$. \\
Obviously, $(x\Cup_S y)^*=x^*\Cap_S y^*$ and $(x\Cap_S y)^*=x^*\Cup_S y^*$.  
For all $x,y\in X$, we define $\le_S$ by $x\le_S y$ iff $x=x\Cap_S y$. \\
\indent
A \emph{quantum-MV algebra} (\emph{QMV algebra, for short}) (\cite{Giunt1}) is an S-algebra $(X,\oplus,^*,0,1)$ satisfying the following axiom, for all $x, y, z\in X:$ \\
$(QMV)$ $x\oplus ((x^*\Cap_S y)\Cap (z\Cap_S x^*))=(x\oplus y)\Cap_S (x\oplus z)$. 

\begin{lemma} \label{qbe-10} $\rm($\cite{Ciu78}$\rm)$ 
Let $(X,\ra,1)$ be a BE algebra. The following hold for all $x,y,z\in X$: \\
$(1)$ $x\ra (y\ra x)=1;$ \\
$(2)$ $x\le (x\ra y)\ra y$. \\
If $X$ is bounded, then: \\
$(3)$ $x\ra y^*=y\ra x^*;$ \\
$(4)$ $x\le x^{**}$. \\
If $X$ is involutive, then: \\
$(5)$ $x^*\ra y=y^*\ra x;$ \\
$(6)$ $x^*\ra y^*=y\ra x;$ \\
$(7)$ $(x\ra y)^*\ra z=x\ra (y^*\ra z);$ \\
$(8)$ $x\ra (y\ra z)=(x\ra y^*)^*\ra z;$ \\   
$(9)$ $(x^*\ra y)^*\ra (x^*\ra y)=(x^*\ra x)^*\ra (y^*\ra y)$.  
\end{lemma}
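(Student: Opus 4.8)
The plan is to prove the nine identities in the listed order, deriving each from the BE-algebra axioms and, wherever convenient, from the previously established items; the workhorse is the exchange law $(BE_4)$, supplemented by $x\ra x=1$ and, for the last five items, by involutivity.

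For $(1)$, apply $(BE_4)$ to interchange $x$ and $y$ and then use $(BE_1)$, $(BE_2)$: $x\ra(y\ra x)=y\ra(x\ra x)=y\ra 1=1$. For $(2)$, the assertion is $x\ra((x\ra y)\ra y)=1$, and $(BE_4)$ rewrites the left side as $(x\ra y)\ra(x\ra y)$, which is $1$ by $(BE_1)$. For $(3)$, unfold the stars as $y^*=y\ra 0$ and $x^*=x\ra 0$ and apply $(BE_4)$ directly: $x\ra y^*=x\ra(y\ra 0)=y\ra(x\ra 0)=y\ra x^*$. Item $(4)$ is just the special case $y=0$ of $(2)$, since $(x\ra 0)\ra 0=x^{**}$.

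For the involutive items, $(5)$ follows from $(3)$ applied to the pair $x^*,y^*$, using $x^{**}=x$: $x^*\ra y=x^*\ra y^{**}=y^*\ra x^{**}=y^*\ra x$. Likewise $(6)$ follows from $(3)$ applied to $x^*,y$: $x^*\ra y^*=y\ra x^{**}=y\ra x$. For $(7)$, first write $z=z^{**}$ and use $(6)$ to get $(x\ra y)^*\ra z=(x\ra y)^*\ra z^{**}=z^*\ra(x\ra y)$; then $(BE_4)$ moves $z^*$ past $x$, giving $x\ra(z^*\ra y)$; and $(5)$ rewrites $z^*\ra y$ as $y^*\ra z$, so that $(x\ra y)^*\ra z=x\ra(y^*\ra z)$. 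Item $(8)$ is then immediate from $(7)$ by substituting $y^*$ for $y$ and using $y^{**}=y$.

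The only identity requiring a little bookkeeping is $(9)$, and the plan there is to reduce each side separately to the common expression $x^*\ra(x^*\ra(y^*\ra y))$. For the left side, $(7)$ (with $x^*$ in the role of $x$, $y$ in the role of $y$, and $x^*\ra y$ in the role of $z$) gives $(x^*\ra y)^*\ra(x^*\ra y)=x^*\ra(y^*\ra(x^*\ra y))$, and $(BE_4)$ turns the inner term $y^*\ra(x^*\ra y)$ into $x^*\ra(y^*\ra y)$. For the right side, $(7)$ (now with $x^*$ in the role of $x$, $x$ in the role of $y$, and $y^*\ra y$ in the role of $z$) gives $(x^*\ra x)^*\ra(y^*\ra y)=x^*\ra(x^*\ra(y^*\ra y))$. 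Since both sides now read $x^*\ra(x^*\ra(y^*\ra y))$, the identity follows. I expect no conceptual obstacle anywhere; the only thing to watch is getting the instantiations of $(BE_4)$, $(5)$, $(6)$ and $(7)$ right, particularly in $(9)$.
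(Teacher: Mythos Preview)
Your proof is correct in every item; each step is a clean application of $(BE_4)$, $(BE_1)$, $(BE_2)$ together with involutivity where needed, and the reduction of both sides of $(9)$ to $x^*\ra(x^*\ra(y^*\ra y))$ via $(7)$ and $(BE_4)$ is exactly right. The paper does not give its own proof of this lemma (it is cited from \cite{Ciu78}), so there is nothing to compare against here.
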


\noindent
In a BE algebra $X$, we define the additional operation: \\
$\hspace*{3cm}$ $x\Cup y=(x\ra y)\ra y$. \\
If $X$ is involutive, we define the operations: \\
$\hspace*{3cm}$ $x\Cap y=((x^*\ra y^*)\ra y^*)^*$, 
                $x\odot y=(x\ra y^*)^*=(y\ra x^*)^*$, \\
and the relation $\le_Q$ by: \\
$\hspace*{3cm}$ $x\le_Q y$ iff $x=x\Cap y$. 

\begin{proposition} \label{qbe-20} $\rm($\cite{Ciu78}$\rm)$ Let $X$ be an involutive BE algebra. 
Then the following hold for all $x,y,z\in X$: \\
$(1)$ $x\le_Q y$ implies $x=y\Cap x$ and $y=x\Cup y;$ \\
$(2)$ $\le_Q$ is reflexive and antisymmetric; \\
$(3)$ $x\Cap y=(x^*\Cup y^*)^*$ and $x\Cup y=(x^*\Cap y^*)^*;$ \\ 
$(4)$ $x\le_Q y$ implies $x\le y;$ \\
$(5)$ $0\le_Q x \le_Q 1;$ \\
$(6)$ $0\Cap x=x\Cap 0=0$ and $1\Cap x=x\Cap 1=x;$ \\
$(7)$ $(x\Cap y)\ra z=(y\ra x)\ra (y\ra z);$ \\
$(8)$ $z\ra (x\Cup y)=(x\ra y)\ra (z\ra y);$ \\
$(9)$ $x\Cap y\le x,y\le x\Cup y;$ \\
$(10)$ $x\Cap (y\Cap x)=y\Cap x$ and $x\Cap (x\Cap y)=x\Cap y$. 
\end{proposition}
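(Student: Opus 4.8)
The plan is to verify the ten items by direct computation from the definitions of $\Cap$, $\Cup$, $\le_Q$ and Lemma~\ref{qbe-10}, arranging the order so that $(4)$ is established essentially first and the order-theoretic statements $(1)$, $(2)$ and the reductions in $(10)$ all feed off it. Item $(3)$ is immediate: since $x\Cup y=(x\ra y)\ra y$ we have $x^*\Cup y^*=(x^*\ra y^*)\ra y^*$, so $x\Cap y=((x^*\ra y^*)\ra y^*)^*=(x^*\Cup y^*)^*$ by the definition of $\Cap$, and applying $^*$ together with $x^{**}=x$ gives $x\Cup y=(x^*\Cap y^*)^*$. The reflexivity half of $(2)$ is just as short: $x\Cap x=((x^*\ra x^*)\ra x^*)^*=(1\ra x^*)^*=x^{**}=x$ by $(BE_1)$ and $(BE_3)$.

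The key step is $(4)$. Unfolding the definition, $x\le_Q y$ says $x^*=(x^*\ra y^*)\ra y^*$, which by Lemma~\ref{qbe-10}$(6)$ becomes $x^*=(y\ra x)\ra y^*$. Using $x\ra y=x\ra y^{**}=y^*\ra x^*$ (Lemma~\ref{qbe-10}$(3)$) and then $(BE_4)$,
\[
x\ra y=y^*\ra\big((y\ra x)\ra y^*\big)=(y\ra x)\ra(y^*\ra y^*)=(y\ra x)\ra 1=1,
\]
so $x\le y$. With this in hand, $(1)$ follows at once: $x\Cup y=(x\ra y)\ra y=1\ra y=y$, and since $y^*\ra x^*=x\ra y=1$ by Lemma~\ref{qbe-10}$(6)$ we get $y\Cap x=((y^*\ra x^*)\ra x^*)^*=(1\ra x^*)^*=x^{**}=x$. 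The antisymmetry half of $(2)$ is then immediate: if $x\le_Q y$ and $y\le_Q x$, then by $(4)$ both $x\ra y=1$ and $y\ra x=1$, and substituting $y\ra x=1$ into $x^*=(y\ra x)\ra y^*$ gives $x^*=1\ra y^*=y^*$, hence $x=y$.

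The rest is computation. For $(6)$, use $0^*=1$, $1^*=0$ and $0\le x^*$: e.g. $0\Cap x=((1\ra x^*)\ra x^*)^*=(x^*\ra x^*)^*=1^*=0$, $x\Cap 0=((x^*\ra 1)\ra 1)^*=1^*=0$, $1\Cap x=((0\ra x^*)\ra x^*)^*=(1\ra x^*)^*=x^{**}=x$, and $x\Cap 1=((x^*\ra 0)\ra 0)^*=(x^{**}\ra 0)^*=x^{**}=x$; then $(5)$ is contained in $(6)$. For $(7)$, rewrite $x\Cap y=((y\ra x)\ra y^*)^*$ with Lemma~\ref{qbe-10}$(6)$ and apply Lemma~\ref{qbe-10}$(7)$ (with $y^{**}=y$) to get $(x\Cap y)\ra z=((y\ra x)\ra y^*)^*\ra z=(y\ra x)\ra(y\ra z)$. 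Item $(8)$ is $(BE_4)$ applied to $z\ra\big((x\ra y)\ra y\big)$. Then $(9)$ drops out: $(x\Cap y)\ra x=(y\ra x)\ra(y\ra x)=1$ and $(x\Cap y)\ra y=(y\ra x)\ra 1=1$ by $(BE_1)$, $(BE_2)$, while $x\le x\Cup y$ is Lemma~\ref{qbe-10}$(2)$ and $y\ra(x\Cup y)=(x\ra y)\ra 1=1$. Finally $(10)$ reduces to $(9)$: since $x\Cap y\le x$, i.e. $(x\Cap y)\ra x=1$, setting $v=x\Cap y$ and using Lemma~\ref{qbe-10}$(6)$ gives $x\Cap v=((v\ra x)\ra v^*)^*=(1\ra v^*)^*=v$, and symmetrically $x\Cap(y\Cap x)=y\Cap x$ via $(y\Cap x)\ra x=1$.

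There is no deep obstacle here, but the computation in $(4)$ is the one that must be gotten right: the point is to recognize that after rewriting $x^*\ra y^*$ as $y\ra x$ and using involutivity, $(BE_4)$ forces $x\ra y=1$. Everything distinctive about $\le_Q$ — the content of $(1)$, the antisymmetry in $(2)$, and the collapses in $(10)$ — rests on that single identity, so I would prove $(4)$ up front even though it appears fourth in the statement.
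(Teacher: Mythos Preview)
Your proof is correct. The paper does not supply a proof of this proposition; it is quoted from \cite{Ciu78} as a preliminary result, so there is no argument in the present paper to compare against. Your ordering---establishing $(4)$ first and deriving $(1)$, the antisymmetry in $(2)$, and $(10)$ from it---is sound, and each computation checks out against the definitions and Lemma~\ref{qbe-10}.
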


\begin{proposition} \label{qbe-30} $\rm($\cite{Ciu81}$\rm)$ Let $X$ be an involutive BE algebra. 
Then the following hold for all $x,y,z\in X$: \\
$(1)$ $x, y\le_Q z$ and $z\ra x=z\ra y$ imply $x=y;$ \emph{(cancellation law)} \\   
$(2)$ $(x\ra (y\ra z))\ra x^*=((y\ra z)\Cap x)^*;$ \\
$(3)$ $x\ra ((y\ra x^*)^*\Cup z)=y\Cup (x\ra z);$ \\
$(4)$ $((y\ra x)\Cap z)\ra x=y\Cup (z\ra x);$ \\
$(5)$ $x\le_Q y$ implies $(y\ra x)\odot y=x;$ \\ 
$(6)$ $x\ra (z\odot y^*)=((z\ra y)\odot x)^*;$ \\
$(7)$ $(x\Cup y)\Cap y=y$ and $(x\Cap y)\Cup y=y;$ \\
$(8)$ $z\Cap x=(x\ra (x\ra z)^*)^*;$ \\
$(9)$ $(x\Cap (y\Cap z))^*=((z\ra x)\Cap (z\ra y))\ra z^*;$ \\ 
$(10)$ $(x\Cap y)^*\ra (y\ra x)^*=y\Cup (y\ra x)^*$. 
\end{proposition}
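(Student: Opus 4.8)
The plan is to treat each of the ten items as an identity (items $(1)$ and $(5)$ as conditional identities) in the involutive BE algebra $X$, obtained by unfolding the derived operations $\Cap$, $\Cup$, $\odot$ back into the implication $\ra$ and then rewriting with the basic rules: $(BE_3)$, $(BE_4)$, parts $(3)$, $(6)$, $(7)$ of Lemma \ref{qbe-10}, and parts $(3)$, $(7)$, $(9)$ of Proposition \ref{qbe-20}. Two elementary reformulations carry most of the load. First, since $u^*\ra v^*=v\ra u$ by Lemma \ref{qbe-10}$(6)$, we have $u\Cap v=((u^*\ra v^*)\ra v^*)^*=((v\ra u)\ra v^*)^*$, hence the variable-free form of $(2)$: $(u\Cap v)^*=(v\ra u)\ra v^*$ for all $u,v\in X$. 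Second, the same computation together with the definition of $\odot$ gives $x\Cap y=((y\ra x)\ra y^*)^*=(y\ra x)\odot y$ for all $x,y\in X$.

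With these in hand, $(2)$ and $(8)$ are immediate: putting $w=y\ra z$, the variable-free reformulation gives $(w\Cap x)^*=(x\ra w)\ra x^*$, which is $(2)$; applying Lemma \ref{qbe-10}$(3)$ to rewrite $(x\ra w)\ra x^*$ as $x\ra(x\ra w)^*$ and taking complements gives $z\Cap x=(x\ra(x\ra z)^*)^*$, which is $(8)$. Item $(5)$ is the second reformulation combined with the hypothesis $x=x\Cap y$, and $(1)$ then follows by applying $(5)$ to $x\le_Q z$ and to $y\le_Q z$ and cancelling the common factor $z\ra x=z\ra y$, i.e. $x=(z\ra x)\odot z=(z\ra y)\odot z=y$. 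For $(6)$, $z\odot y^*=(z\ra y^{**})^*=(z\ra y)^*$, so the left side is $x\ra(z\ra y)^*=(z\ra y)\ra x^*$ by Lemma \ref{qbe-10}$(3)$, while the right side is $((z\ra y)\odot x)^*=((z\ra y)\ra x^*)^{**}=(z\ra y)\ra x^*$.

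Items $(3)$ and $(4)$ reduce, after unfolding $\Cup$, to recognising a $\Cup$ on the other side. In $(3)$, Lemma \ref{qbe-10}$(7)$ gives $(y\ra x^*)^*\ra z=y\ra(x\ra z)$, hence $(y\ra x^*)^*\Cup z=(y\ra(x\ra z))\ra z$, and $(BE_4)$ turns $x\ra((y\ra(x\ra z))\ra z)$ into $(y\ra(x\ra z))\ra(x\ra z)=y\Cup(x\ra z)$. In $(4)$, Proposition \ref{qbe-20}$(7)$ gives $((y\ra x)\Cap z)\ra x=(z\ra(y\ra x))\ra(z\ra x)$, and $(BE_4)$ rewrites $z\ra(y\ra x)$ as $y\ra(z\ra x)$, yielding $y\Cup(z\ra x)$. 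For $(7)$: $x\Cap y\le y$ by Proposition \ref{qbe-20}$(9)$, so $(x\Cap y)\ra y=1$ and $(x\Cap y)\Cup y=1\ra y=y$ by $(BE_3)$; the dual identity $(x\Cup y)\Cap y=y$ follows by applying this to $x^*,y^*$ and using the De Morgan laws of Proposition \ref{qbe-20}$(3)$. For $(9)$, apply the variable-free $(2)$ to the pair $(x,\,y\Cap z)$ to get $(x\Cap(y\Cap z))^*=((y\Cap z)\ra x)\ra(y\Cap z)^*$, then use $(y\Cap z)\ra x=(z\ra y)\ra(z\ra x)$ (Proposition \ref{qbe-20}$(7)$) and $(y\Cap z)^*=(z\ra y)\ra z^*$ (variable-free $(2)$ again); the result is $((z\ra y)\ra(z\ra x))\ra((z\ra y)\ra z^*)$, which is precisely $((z\ra x)\Cap(z\ra y))\ra z^*$ by Proposition \ref{qbe-20}$(7)$. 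Finally, in $(10)$ the left side is $((y\ra x)\ra y^*)\ra(y\ra x)^*$ by the variable-free $(2)$, and the right side $y\Cup(y\ra x)^*=(y\ra(y\ra x)^*)\ra(y\ra x)^*$ becomes the same expression once Lemma \ref{qbe-10}$(3)$ rewrites $y\ra(y\ra x)^*$ as $(y\ra x)\ra y^*$.

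There is no single deep step: the work is entirely bookkeeping. The two points worth flagging are that $\Cap$, $\Cup$, $\odot$ are not symmetric in this setting, so the order of the arguments must be tracked at every rewrite, and that $(9)$ is the one place where a fairly opaque expression, $((z\ra y)\ra(z\ra x))\ra((z\ra y)\ra z^*)$, has to be identified as an instance of Proposition \ref{qbe-20}$(7)$; I would expect that recognition — and, secondarily, choosing the correct instance of Lemma \ref{qbe-10}$(7)$ in $(3)$ — to be where a first attempt is most likely to stall.
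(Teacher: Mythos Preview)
Your proof is correct in every item; the bookkeeping is clean and the two reformulations $(u\Cap v)^*=(v\ra u)\ra v^*$ and $x\Cap y=(y\ra x)\odot y$ do exactly the work you claim. Note that the paper does not actually prove this proposition---it is quoted as a preliminary result from \cite{Ciu81}---so there is no in-paper proof to compare against.
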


\begin{definition} \label{qmv-30} $\rm($\cite{Ior30}$\rm)$
\emph{      
A \emph{(left-)m-BE algebra} is an algebra $(X,\odot,^{*},1)$ of type $(2,1,0)$ satisfying the following properties, 
for all $x,y,z\in X$:  
(PU) $1\odot x=x=x\odot 1;$ 
(Pcomm) $x\odot y=y\odot x;$ 
(Pass) $x\odot (y\odot z)=(x\odot y)\odot z;$  
(m-L) $x\odot 0=0;$ 
(m-Re) $x\odot x^{*}=0$, 
where $0:=1^*$. 
}\end{definition}
Note that, according to \cite[Cor. 17.1.3]{Ior35}, the involutive (left-)BE algebras $(X,\ra,^*,1)$ are 
term-equivalent to involutive (left-)m-BE algebras $(X,\odot,^*,1)$, by the mutually inverse transformations 
(\cite{Ior30, Ior35}): \\ 
$\hspace*{3cm}$ $\Phi:$\hspace*{0.2cm}$ x\odot y:=(x\ra y^*)^*$ $\hspace*{0.1cm}$ and  
                $\hspace*{0.1cm}$ $\Psi:$\hspace*{0.2cm}$ x\ra y:=(x\odot y^*)^*$. 

\begin{definition} \label{qbe-30-1} $\rm($\cite{Ior34},\cite[Def. 11.3.1]{Ior35}$\rm)$
\emph{
An involutive (left-)m-BE algebra $(X,\odot,^*,1)$ is called: \\
- an \emph{orthomodular algebra} (OM algebra for short), if it verifies: \\
$(Pom)$ $x\Cup (x\odot y)=x$; \\
- a \emph{pre-MV algebra} if it verifies: \\
$(Pmv)$ $x\odot (x^*\Cup y)=x\odot y$; \\
- a \emph{meta-MV algebra} if it verifies: \\
$(\Delta_m)$ $(x\Cap y)\odot (y\Cap x)^*=0$. 
}
\end{definition}


$\vspace*{1mm}$

\section{Generalizations of quantum-Wajsberg algebras}

In this section, we redefine the pre-MV and the meta-MV algebras, by introducing the notions of pre-Wajsberg and meta-Wajsberg algebras, respectively, proving that these structures are generalizations of quantum-Wajsberg algebras. 
We give characterizations of these algebras, and we show that any pre-Wajsberg algebra is a meta-Wajsberg algebra. 
It is proved that an implicative-orthomodular algebra is a quantum-Wajsberg algebra if and only if it is a meta-Wajsberg algebra. 
If an involutive BE algebra is implicative, we prove that the implicative-orthomodular algebras coincide with the quantum-Wajsberg and pre-Wajsberg algebras, and that any implicative-orthomodular algebra is a meta-Wajsberg algebra. \\
\noindent
A \emph{left-quantum-MV algebra}, or a \emph{(left-)QMV algebra} for short (\cite[Def. 3.10]{Ior34}), is an involutive (left-)m-BE algebra $(X,\odot,^{*},1)$ verifying the following axiom: for all $x,y,z\in X$, \\
(Pqmv) $x\odot ((x^*\Cup y)\Cup (z\Cup x^*))=(x\odot y)\Cup (x\odot z)$. \\
\noindent 
A \emph{left-quantum-Wajsberg algebra} (\emph{QW algebra, for short}) (\cite{Ciu78}) $(X,\ra,^*,1)$ is an 
involutive BE algebra $(X,\ra,^*,1)$ satisfying the following condition: for all $x,y,z\in X$, \\
(QW) $x\ra ((x\Cap y)\Cap (z\Cap x))=(x\ra y)\Cap (x\ra z)$. \\
Condition (QW) is equivalent to the following conditions: \\
($QW_1$) $x\ra (x\Cap y)=x\ra y;$ \\ 
($QW_2$) $x\ra (y\Cap (z\Cap x))=(x\ra y)\Cap (x\ra z)$. \\
Using the transformations $\Phi$ and $\Psi$, we can show that axioms $(Pqmv)$ and $(QW)$ are equivalent, hence 
the left-quantum-Wajsberg algebras are term-equivalent to left-quantum-MV algebras. \\
In what follows, by quantum-MV algebras and quantum-Wajsberg algebras we understand the left-quantum-MV algebras and  left-quantum-Wajsberg algebras, respectively. 
For more details on quantum-Wajsberg algebras we refer the reader to \cite{Ciu78}. 

\begin{definition} \label{gqw-10} 
\emph{
An involutive BE algebra $X$ is called: \\
- a \emph{pre-Wajsberg algebra} (pre-W algebra for short), if it verifies $(QW_1);$ \\
- an \emph{implicative-orthomodular algebra} (IOM algebra for short), if it verifies $(QW_2);$ \\
- a \emph{meta-Wajsberg algebra} (mreta-algebra for short), if it verifies: \\
$(QW_3)$ $(x\Cap y)\ra (y\Cap x)=1$. 
}
\end{definition}

\begin{remarks} \label{gqw-10-10} 
$(1)$ Condition $(QW_3)$ is equivalent to condition: for all $x,y\in X$, \\
$(QW_3^{'})$ $(x\Cup y)\ra (y\Cup x)=1$. \\
Indeed, by $(QW_3)$ we have: \\
$\hspace*{2cm}$ $(x\Cup y)\ra (y\Cup x)=(y\Cup x)^*\ra (x\Cup y)^*=(y^*\Cap x^*)\ra (x^*\Cap y^*)=1$. \\
Conversely, using $(QW_3^{'})$ we get: \\
$\hspace*{2cm}$ $(x\Cap y)\ra (y\Cap x)=(y\Cap x)^*\ra (x\Cap y)^*=(y^*\Cup x^*)\ra (x^*\Cup y^*)=1$. \\
$(2)$ Obviously, the quantum-Wajsberg algebras are pre-Wajsberg algebras and implicative-orthomodular algebras. 
\end{remarks}

\begin{proposition} \label{gqw-20} Any pre-Wajsberg algebra $X$ is a meta-Wajsberg algebra. 
\end{proposition}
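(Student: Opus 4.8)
The plan is to verify the defining identity $(QW_3)$, namely $(x\Cap y)\ra(y\Cap x)=1$ for all $x,y\in X$, directly from the pre-Wajsberg axiom $(QW_1)$, i.e. $x\ra(x\Cap y)=x\ra y$. The key observation is that the right-hand term $y\Cap x$ appearing in $(QW_3)$ is exactly what Proposition~\ref{qbe-20}(7) turns into something to which $(QW_1)$ applies: that proposition rewrites any implication with premise $x\Cap y$ as $(x\Cap y)\ra z=(y\ra x)\ra(y\ra z)$, and the choice $z:=y\Cap x$ produces the subterm $y\ra(y\Cap x)$, which $(QW_1)$ collapses.

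Concretely, I would first instantiate Proposition~\ref{qbe-20}(7) with $z:=y\Cap x$, obtaining
$$(x\Cap y)\ra(y\Cap x)=(y\ra x)\ra\bigl(y\ra(y\Cap x)\bigr).$$
Next, I apply $(QW_1)$ with the roles of $x$ and $y$ interchanged (it is a universally quantified identity), which yields $y\ra(y\Cap x)=y\ra x$. Substituting this into the previous display gives $(x\Cap y)\ra(y\Cap x)=(y\ra x)\ra(y\ra x)$, and this equals $1$ by $(BE_1)$. Hence $X$ satisfies $(QW_3)$, i.e. $X$ is a meta-Wajsberg algebra. (Alternatively, one could establish the equivalent form $(QW_3')$ from Remarks~\ref{gqw-10-10}(1) using Proposition~\ref{qbe-20}(8) in place of (7) together with the $\Cup$-version of $(QW_1)$, but the computation above is the shortest.)

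I do not expect any real obstacle here: the entire argument is two substitutions followed by reflexivity of $\ra$, and the only thing that requires thought is selecting the right auxiliary identity, Proposition~\ref{qbe-20}(7). The proof uses nothing about $X$ beyond its being an involutive BE algebra (so that Proposition~\ref{qbe-20}(7) is available) and the axiom $(QW_1)$; in particular none of the remaining items of Propositions~\ref{qbe-20} and \ref{qbe-30} are needed.
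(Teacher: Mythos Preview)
Your proof is correct and follows essentially the same idea as the paper's: reduce the meta-Wajsberg identity to an instance of $(QW_1)$ and then conclude by reflexivity $(BE_1)$. The only cosmetic difference is that the paper verifies the equivalent form $(QW_3')$ by expanding $(x\Cup y)\ra(y\Cup x)$ by hand (using $(BE_4)$ and contraposition to isolate $x^*\ra(x^*\Cap y^*)$ before applying $(QW_1)$), whereas you verify $(QW_3)$ directly and let Proposition~\ref{qbe-20}(7) do that unpacking for you; your version is the shorter of the two.
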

\begin{proof} 
Assume that $X$ is a pre-Wajsberg algebra. Using $(QW_1)$, we get: \\
$\hspace*{2cm}$ $(x\Cup y)\ra (y\Cup x)=(x\Cup y)\ra ((y\ra x)\ra x)=(y\ra x)\ra ((x\Cup y)\ra x)$ \\
$\hspace*{5cm}$ $=(y\ra x)\ra (x^*\ra (x\Cup y)^*)=(y\ra x)\ra (x^*\ra (x^*\Cap y^*)$ \\
$\hspace*{5cm}$ $=(y\ra x)\ra (x^*\ra y^*)=(y\ra x)\ra (y\ra x)=1$, \\
that is condition $(QW_3^{'})$ is satisfied. Hence $X$ is a meta-Wajsberg algebra. 
\end{proof}

\begin{corollary} \label{gqw-20-10} Any quantum-Wajsberg algebra $X$ is a meta-Wajsberg algebra. 
\end{corollary}

\begin{lemma} \label{gqw-30} Let $X$ be an implicative-orthomodular algebra. Then $X$ satisfies the following  equivalent conditions: \\
$(IOM)$ $x\Cap (x^*\ra y)=x;$ $\hspace*{5cm}$ \\
$(IOM^{'})$ $x\Cap (y\ra x)=x;$ \\
$(IOM^{''})$ $x\Cup (x\ra y)^*=x$. 
\end{lemma}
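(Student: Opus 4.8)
The plan is to prove the statement in two stages: (i) that $(IOM)$, $(IOM')$, $(IOM'')$ are equivalent in \emph{every} involutive BE algebra, so that $(QW_2)$ plays no role there; and (ii) that one of them, say $(IOM)$, holds in every implicative-orthomodular algebra, which is where $(QW_2)$ enters.

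For (i): $(IOM)\Leftrightarrow(IOM')$ follows from Lemma \ref{qbe-10}$(5)$, since $x^{*}\ra y=y^{*}\ra x$ and $y\mapsto y^{*}$ is a bijection of $X$, so ``$x\Cap(x^{*}\ra y)=x$ for all $y$'' and ``$x\Cap(y\ra x)=x$ for all $y$'' are literally the same assertion. For $(IOM')\Leftrightarrow(IOM'')$, apply $^{*}$ to $x\Cap(y\ra x)=x$ and use $(a\Cap b)^{*}=a^{*}\Cup b^{*}$ (Proposition \ref{qbe-20}$(3)$) together with $(y\ra x)^{*}=(x^{*}\ra y^{*})^{*}$ (Lemma \ref{qbe-10}$(6)$); writing $u=x^{*}$, $v=y^{*}$ this reads $u\Cup(u\ra v)^{*}=u$, i.e.\ $(IOM'')$. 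So it suffices to prove $(IOM)$.

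For (ii): observe first that $(IOM)$, namely $x\Cap(x^{*}\ra y)=x$, is by definition the relation $x\le_{Q}(x^{*}\ra y)$ (one inequality, $x\Cap(x^{*}\ra y)\le x$, being automatic by Proposition \ref{qbe-20}$(9)$), while the plain inequality $x\le x^{*}\ra y$ is immediate from Lemma \ref{qbe-10}$(8)$, since $x\ra(x^{*}\ra y)=(x\ra x)^{*}\ra y=0\ra y=1$; thus the task is to promote $\le$ to $\le_{Q}$. Next I would extract from $(QW_2)$ the fact that $y\Cap(z\Cap x)\le_{Q}x$ for all $x,y,z$: by $(QW_2)$, $(x\ra y)\Cap(x\ra z)=x\ra\!\big(y\Cap(z\Cap x)\big)$, while Proposition \ref{qbe-30}$(9)$ gives $\big(y\Cap(z\Cap x)\big)^{*}=\big((x\ra y)\Cap(x\ra z)\big)\ra x^{*}$, so with $w=y\Cap(z\Cap x)$ and the instance $(a\ra b)\ra a^{*}=(b\Cap a)^{*}$ of Proposition \ref{qbe-30}$(2)$ one gets $w^{*}=(x\ra w)\ra x^{*}=(w\Cap x)^{*}$, i.e.\ $w=w\Cap x$; in particular $x\Cap(x^{*}\ra y)\le_{Q}x^{*}\ra y$. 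Then, taking $(QW_2)$ with $x^{*}\ra y$ in place of its first and third variables and $x$ in place of its second, and simplifying with $p\Cap p=p$, $p\ra p=1$ and Proposition \ref{qbe-20}$(6)$, one reaches
\[
(x^{*}\ra y)\ra\!\big(x\Cap(x^{*}\ra y)\big)=(x^{*}\ra y)\ra x ,
\]
and the remaining move is to cancel $(x^{*}\ra y)\ra(-)$ via the cancellation law of Proposition \ref{qbe-30}$(1)$.

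The hard part is exactly this cancellation. Proposition \ref{qbe-30}$(1)$ requires both $x\Cap(x^{*}\ra y)$ and $x$ to be $\le_{Q}$ the same element: $x\Cap(x^{*}\ra y)\le_{Q}x^{*}\ra y$ is available, but $x\le_{Q}x^{*}\ra y$ is precisely the conclusion we want, so the argument loops if applied naively. The genuine work will be to avoid this loop — by rewriting the $\Cap$-expressions with the idempotency/absorption identities of Proposition \ref{qbe-20}$(10)$ so that the cancellation is applied with a majorant already known to dominate both terms in the $\le_{Q}$-sense, or by establishing $x\le_{Q}x^{*}\ra y$ and the companion $\le_{Q}$-relation separately and then invoking the antisymmetry of $\le_{Q}$ (Proposition \ref{qbe-20}$(2)$). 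The non-commutativity of $\Cap$ and $\Cup$ is what makes this delicate; once $(IOM)$ is in place, part (i) immediately yields $(IOM')$ and $(IOM'')$.
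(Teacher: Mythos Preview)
Your part (i) is fine: the equivalence of $(IOM)$, $(IOM')$, $(IOM'')$ in any involutive BE algebra is exactly the substitution-and-duality argument you give.

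Your part (ii), however, has a genuine gap that you yourself flag and then do not close. You arrive at
\[
(x^{*}\ra y)\ra\bigl(x\Cap(x^{*}\ra y)\bigr)=(x^{*}\ra y)\ra x,
\]
but to invoke the cancellation law of Proposition \ref{qbe-30}$(1)$ you need $x\le_{Q}x^{*}\ra y$, which is precisely the conclusion $(IOM)$. The two escape routes you sketch---``rewrite with Proposition \ref{qbe-20}$(10)$'' and ``establish $x\le_{Q}x^{*}\ra y$ separately''---are not actually worked out, and the second is literally the statement to be proved. So as it stands the argument is circular and the lemma is not established.

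The missed idea is that $(IOM)$ drops out of $(QW_2)$ by a single substitution, with no cancellation needed. In
\[
x\ra\bigl(y\Cap(z\Cap x)\bigr)=(x\ra y)\Cap(x\ra z)
\]
put $y:=0$. By Proposition \ref{qbe-20}$(6)$ the left-hand side becomes $x\ra 0=x^{*}$, while the right-hand side is $x^{*}\Cap(x\ra z)$. Hence $x^{*}=x^{*}\Cap(x\ra z)$; replacing $x$ by $x^{*}$ and $z$ by $y$ gives $x=x\Cap(x^{*}\ra y)$, which is $(IOM)$. This is what the paper has in mind by ``straightforward'' (and is made explicit in the proof of Theorem \ref{gqw-60}). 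Your elaborate route through Proposition \ref{qbe-30}$(2)$,$(9)$ and cancellation is unnecessary and, in the form given, incomplete.
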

\begin{proof} It is straightforward. 
\end{proof}

\begin{proposition} \label{gqw-30-10} Let $X$ be an involutive BE algebra satisfying the equivalent conditions 
$(IOM)$, $(IOM^{'})$, $(IOM^{''})$. Then the following hold for all $x,y,z\in X$: \\
$(1)$ $x\Cap (y\Cup x)=x$ and $x\Cup (y\Cap x)=x$. \\
$(2)$ if $x\le_Q y$, then $y\Cup x=y$ and $y^*\le_Q x^*;$ \\
$(3)$ if $x\le_Q y$, then $y\ra z\le_Q x\ra z$ and $z\ra x\le_Q z\ra y;$ \\ 
$(4)$ if $x\le_Q y$, then $x\Cap z\le_Q y\Cap z$ and $x\Cup z\le_Q y\Cup z;$ \\
$(5)$ $x\Cap ((y\ra x)\Cap (z\ra x))=x;$ \\
$(6)$ $x\Cup ((y^*\ra x^*)^*\Cup (z^*\ra x^*))=x;$ \\
$(7)$ $(x\Cup y)\ra (x\ra y)^*=y^*.$   
\end{proposition}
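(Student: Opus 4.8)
The plan is to prove the seven identities in the listed order, invoking only the three equivalent forms $(IOM)$, $(IOM')$, $(IOM'')$ supplied by Lemma \ref{gqw-30} together with the arithmetic of $\Cap$ and $\Cup$ already collected in Lemma \ref{qbe-10} and Propositions \ref{qbe-20}, \ref{qbe-30}; items (4)--(6) will be bootstrapped from (2)--(3), and (6) is the $^*$-dual of (5). Item (1) is immediate: $x\Cap(y\Cup x)=x\Cap\big((y\ra x)\ra x\big)$ is a direct instance of $(IOM')$, and then $x\Cup(y\Cap x)=\big(x^*\Cap(y^*\Cup x^*)\big)^*=(x^*)^*=x$, using De Morgan (Proposition \ref{qbe-20}(3)) and the first identity of (1) applied to $x^*,y^*$.

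The crux is (2), and I expect it to be the main obstacle, since $\Cup$ is not commutative and so the known implication $x\le_Q y\Rightarrow y=x\Cup y$ (Proposition \ref{qbe-20}(1)) does not give $y\Cup x=y$ directly. The idea is to combine one line of the arithmetic of Proposition \ref{qbe-30} with $(IOM'')$: if $x\le_Q y$ then $x=x\Cap y$, so by Proposition \ref{qbe-30}(10), $x^*\ra(y\ra x)^*=(x\Cap y)^*\ra(y\ra x)^*=y\Cup(y\ra x)^*$, and the right-hand side equals $y$ by $(IOM'')$ (at $a:=y$, $b:=x$); on the other hand $x^*\ra(y\ra x)^*=(y\ra x)\ra x=y\Cup x$ by Lemma \ref{qbe-10}(6). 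Hence $y\Cup x=y$, whence $y^*\le_Q x^*$, because $y^*\Cap x^*=(y\Cup x)^*=y^*$ by De Morgan.

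Items (3)--(6) then cascade. For (3): from $x=x\Cap y$ and Proposition \ref{qbe-20}(7), $x\ra z=(y\ra x)\ra(y\ra z)$, so $(y\ra z)\Cap(x\ra z)=y\ra z$ by $(IOM')$, i.e. $y\ra z\le_Q x\ra z$; using $y=y\Cup x$ from (2) and Proposition \ref{qbe-20}(8), $z\ra y=(y\ra x)\ra(z\ra x)$, so $(z\ra x)\Cap(z\ra y)=z\ra x$ by $(IOM')$, i.e. $z\ra x\le_Q z\ra y$. For (4): from $x\le_Q y$ get $y^*\le_Q x^*$ by (2), then apply the first half of (3) twice to obtain $(y^*\ra z^*)\ra z^*\le_Q(x^*\ra z^*)\ra z^*$, and apply (2) once more; since $\big((x^*\ra z^*)\ra z^*\big)^*=x\Cap z$ by definition, this reads $x\Cap z\le_Q y\Cap z$, and $x\Cup z\le_Q y\Cup z$ is its $^*$-dual via De Morgan. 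For (5): $(IOM')$ gives $x\le_Q y\ra x$, so (4) yields $x=x\Cap(z\ra x)\le_Q(y\ra x)\Cap(z\ra x)$, which is exactly the claim; and (6) follows by applying (5) to $x^*,y^*,z^*$ and complementing.

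Finally, (7) is a self-contained computation. By Lemma \ref{qbe-10}(3) and De Morgan, $(x\Cup y)\ra(x\ra y)^*=(x\ra y)\ra(x^*\Cap y^*)$; rewriting $x^*\Cap y^*=\big(y^*\ra(x\ra y)^*\big)^*$ by Proposition \ref{qbe-30}(8) together with Lemma \ref{qbe-10}(6), and then applying Lemma \ref{qbe-10}(3) once more, the expression becomes $y^*\Cup(x\ra y)^*$, which equals $y^*$ by $(IOM'')$ (at $a:=y^*$, $b:=x^*$, using $y^*\ra x^*=x\ra y$). This is the desired identity.
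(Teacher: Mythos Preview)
Your proof is correct throughout; the overall architecture (prove (1) first, bootstrap (2)--(6), compute (7) separately) matches the paper, but several individual steps take a different route.

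For (2) you invoke Proposition~\ref{qbe-30}(10) to rewrite $(x\Cap y)^*\ra(y\ra x)^*$ and then apply $(IOM'')$. The paper's argument is shorter: since $x=x\Cap y$, item~(1) gives directly $y\Cup x=y\Cup(x\Cap y)=y$. Your detour works, but you are not using the item you just proved. For (3) the situation is reversed: your use of Proposition~\ref{qbe-20}(7) (to get $x\ra z=(y\ra x)\ra(y\ra z)$ from $x=x\Cap y$) and Proposition~\ref{qbe-20}(8) (to get $z\ra y=(y\ra x)\ra(z\ra x)$ from $y=y\Cup x$) makes both monotonicity statements fall out of $(IOM')$ in one line each; the paper instead expands $(y\ra z)\Cap(x\ra z)$ and $(z\ra x)\Cap(z\ra y)$ explicitly, which is longer. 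For (7) you push the expression to $y^*\Cup(x\ra y)^*$ and finish with $(IOM'')$; the paper instead starts from $y=y\Cap(x\ra y)$ (an instance of $(IOM')$), unfolds the $\Cap$, and reads off $y^*=(x\Cup y)\ra(x\ra y)^*$ in two lines --- a bit more direct. Items (1), (4), (5), (6) are handled essentially identically in both proofs.
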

\begin{proof} 
$(1)$ Applying condition $(IOM^{'})$, we get: \\
$\hspace*{1.50cm}$ $x\Cap (y\Cup x)=x\Cap ((y\ra x)\ra x)=x;$ \\ 
$\hspace*{1.50cm}$ $x\Cup (y\Cap x)=x\Cup(y^*\Cup x^*)^*=(x^*\Cap ((y^*\ra x^*)\ra x^*))^*=(x^*)^*=x$. \\
$(2)$ Since $x=x\Cap y$, using $(1)$ we have $y\Cup x=y\Cup (x\Cap y)=y$. 
Thus $y^*=(y\Cup x)^*=y^*\Cap x^*$, that is $y^*\le_Q x^*$. \\
$(3)$ By $(IOM)$, $y\ra z\le_Q (y\ra z)^*\ra (y\ra x)^*$, so that we get: \\
$\hspace*{2.00cm}$ $(y\ra z)\Cap (x\ra z)=(y\ra z)\Cap ((x\Cap y)\ra z)$ \\
$\hspace*{5.15cm}$ $=(y\ra z)\Cap (z^*\ra (x^*\Cup y^*))$ \\
$\hspace*{5.15cm}$ $=(y\ra z)\Cap ((x^*\ra y^*)\ra (z^*\ra y^*))$ \\
$\hspace*{5.15cm}$ $=(y\ra z)\Cap ((y\ra x)\ra (y\ra z))$ \\
$\hspace*{5.15cm}$ $=(y\ra z)\Cap ((y\ra z)^*\ra (y\ra x)^*)=y\ra z$, \\
so that $y\ra z\le_Q x\ra z$. 
Since by $(2)$, $y=y\Cup x$, using $(IOM)$ we have $z\ra x\le_Q (z\ra x)^*\ra (y\ra x)^*$. 
Hence: \\
$\hspace*{2.00cm}$ $(z\ra x)\Cap (z\ra y)=(z\ra x)\Cap (z\ra (y\Cup x))$ \\
$\hspace*{5.15cm}$ $=(z\ra x)\Cap (z\ra ((y\ra x)\ra x))$ \\
$\hspace*{5.15cm}$ $=(z\ra x)\Cap ((y\ra x)\ra (z\ra x))$ \\
$\hspace*{5.15cm}$ $=(z\ra x)\Cap ((z\ra x)^*\ra (y\ra x)^*)$ \\
$\hspace*{5.15cm}$ $=z\ra x$. \\
It follows that $z\ra x\le_Q z\ra y$. \\
$(4)$ Since $x\le_Q y$ implies $y^*\le_Q x^*$, applying $(3)$ we get $x^*\ra z^*\le_Q y^*\ra z^*$ and 
$(y^*\ra z^*)\ra z^*\le_Q (x^*\ra z^*)\ra z^*$. 
It follows that $((x^*\ra z^*)\ra z^*)^*\le_Q ((y^*\ra z^*)\ra z^*)^*$, 
that is $x\Cap z\le_Q y\Cap z$. 
By $(3)$ we also have $y\ra z\le_Q x\ra z$ and $(x\ra z)\ra z\le_Q (y\ra z)\ra z$, hence $x\Cup z\le_Q y\Cup z$. \\
$(5)$ Since $x\le_Q y\ra x, z\ra x$, applying $(4)$, we have: 
$x=x\Cap (z\ra x)\le_Q (y\ra x)\Cap (z\ra x)$, hence $x\Cap ((y\ra x)\Cap (z\ra x))=x$. \\
$(6)$ By $(5)$, we have $x^*\Cup ((y\ra x)^*\Cup (z\ra x)^*))=x^*$. 
Replacing $x, y, z$ with $x^*, y^*, z^*$, respectively, it follows that 
$x\Cup ((y^*\ra x^*)^*\Cup (z^*\ra x^*))=x$. \\
$(7)$ Applying $(IOM^{'})$, we have $y=y\Cap (x\ra y)=((y^*\ra (x\ra y)^*)\ra (x\ra y)^*)^*$. 
It follows that: \\
$\hspace*{2.00cm}$ $y^*=(y^*\ra (x\ra y)^*)\ra (x\ra y)^*=((x\ra y)\ra y)\ra (x\ra y)^*$ \\
$\hspace*{2.50cm}$ $=(x\Cup y)\ra (x\ra y)^*$. 
\end{proof}

\begin{theorem} \label{gqw-60} An involutive BE algebra $X$ is an implicative-orthomodular algebra if and 
only if it satisfies the equivalent conditions $(IOM)$, $(IOM^{'})$, $(IOM^{''})$. 
\end{theorem}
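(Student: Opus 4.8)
The plan is to establish the biconditional by showing that the defining axiom $(QW_2)$ of an implicative-orthomodular algebra is equivalent, over involutive BE algebras, to condition $(IOM)$ (and hence to $(IOM')$ and $(IOM'')$, whose mutual equivalence is already handled by Lemma~\ref{gqw-30}). The forward direction is essentially the content of Lemma~\ref{gqw-30}, which asserts that an IOM algebra satisfies these conditions; so the real work is the converse: assuming $X$ is an involutive BE algebra satisfying $(IOM)$, $(IOM')$, $(IOM'')$, derive $(QW_2)$, namely $x\ra (y\Cap (z\Cap x))=(x\ra y)\Cap (x\ra z)$ for all $x,y,z$.

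First I would unwind the right-hand side using the already-proven machinery. By Proposition~\ref{gqw-30-10}$(5)$ we have $x\Cap ((y\ra x)\Cap (z\ra x))=x$, and the monotonicity facts in Proposition~\ref{gqw-30-10}$(3)$--$(4)$ let me track how $\le_Q$ behaves under $\ra$ and $\Cap$. The idea is to compute $(x\ra y)\Cap (x\ra z)$ by first rewriting $x\ra z$ as $(x\Cap y)\ra z = z^*\ra(x^*\Cup y^*) = (x^*\ra y^*)\ra(z^*\ra y^*)$ (using $(QW_1)$-type rewriting, but here I only have $(QW_2)$'s consequences, so I must instead go through $x\Cap(x^*\ra y)=x$ directly), and then collapsing the double-$\Cap$ via Proposition~\ref{qbe-20}$(10)$ and Proposition~\ref{qbe-30}. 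On the left-hand side, $y\Cap(z\Cap x)$ should be massaged with Proposition~\ref{qbe-30}$(8)$, $z\Cap x=(x\ra(x\ra z)^*)^*$, and with the identity $(QW_2)$'s associated form $x\ra(y\Cap w)$; the key algebraic lever is Proposition~\ref{qbe-20}$(7)$, $(x\Cap y)\ra z=(y\ra x)\ra(y\ra z)$, which converts meets inside an implication into nested implications where the BE-algebra exchange axiom $(BE_4)$ and Lemma~\ref{qbe-10}$(5)$--$(8)$ can be applied freely.

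The main obstacle I anticipate is bridging the gap between the ``pointwise'' condition $(IOM)$, which only controls a single factor of the form $x\Cap(\text{something}\ra x)$, and the ``two-variable'' identity $(QW_2)$, which mixes two independent parameters $y,z$ inside a nested meet. Concretely, after reducing both sides I expect to need that $(x\ra y)\Cap(x\ra z)$ equals $x\ra\big((x\ra y)^* \Cap ((x\ra z)^*\ra x^*)^*\big)$ or some such reassociation, and verifying this will hinge on the cancellation law Proposition~\ref{qbe-30}$(1)$: both sides lie below a common element $w$ with $w\ra(\text{LHS})=w\ra(\text{RHS})$, forcing equality. So the proof strategy is: (i) show both sides of $(QW_2)$ are $\le_Q$ some explicit upper bound built from $x\ra y$ and $x\ra z$; (ii) apply $\ra$ by that bound and simplify each resulting expression to the same normal form using Lemmas~\ref{qbe-10}, Propositions~\ref{qbe-20}, \ref{qbe-30}, and~\ref{gqw-30-10}; (iii) invoke cancellation. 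I would also note that $(QW_1)$ is \emph{not} assumed here (that would make $X$ pre-Wajsberg), so every step must use only the $(IOM)$ family, which is exactly why the cancellation-law route, rather than a direct rewriting, is the natural one.
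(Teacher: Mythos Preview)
Your forward direction is fine and matches the paper. For the converse, however, your plan diverges substantially from the paper's argument and, as written, does not yet constitute a proof: you never specify the common upper bound $w$, you do not show that both sides are $\le_Q w$, and you do not exhibit the normal form to which both $w\ra(\text{LHS})$ and $w\ra(\text{RHS})$ are supposed to reduce. The cancellation-law route is plausible in principle, but the details you sketch are too vague to assess, and at one point you appeal to ``$(QW_2)$'s associated form $x\ra(y\Cap w)$'' while $(QW_2)$ is precisely what you are trying to prove.

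More importantly, you are missing the short direct argument that the paper uses and that bypasses cancellation entirely. The key identities, already established in Proposition~\ref{gqw-30-10}, are $(6)$ $x\Cup\big((y^*\ra x^*)^*\Cup(z^*\ra x^*)^*\big)=x$ and $(7)$ $(x\Cup y)\ra(x\ra y)^*=y^*$. Identity $(7)$ is an ``inversion'' principle: whenever one can write an expression in the form $x\ra u^*$ with $x\Cup u=x$, the whole thing collapses to $u^*$. The paper's proof proceeds by rewriting $x\ra(y\Cap(z\Cap x))$, via Proposition~\ref{qbe-30}$(3)$ applied twice, into exactly such a form with $u=(y^*\ra x^*)^*\Cup(z^*\ra x^*)^*$; then $(6)$ supplies $x\Cup u=x$, and $(7)$ gives $u^*=(x\ra y)\Cap(x\ra z)$ in one stroke. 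No $\le_Q$-bounds, no cancellation, just a chain of equalities. Your proposal does not invoke $(6)$ or $(7)$ at all, and these are precisely the leverage points that turn the ``pointwise'' condition $(IOM)$ into the two-variable identity $(QW_2)$ that you correctly identify as the obstacle.
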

\begin{proof} 
Let $X$ be an IOM algebra. 
Taking $y:=0$ in $(QW_2)$, we have $x^*=x^*\Cap (x\ra z)$, and replacing $x$ with $x^*$ and $z$ with $y$,  
we get $x=x\Cap (x^*\ra y)$. It follows that $X$ satisfies condition $(IOM)$, and by Lemma \ref{gqw-30}, 
conditions $(IOM^{'})$, $(IOM^{''})$ are also verified. 
Conversely, suppose that $X$ is an ivolutive BE algebra satisfying conditions $(IOM)$, $(IOM^{'})$, $(IOM^{''})$.  
Using Propositions \ref{qbe-30}$(3)$, \ref{gqw-30-10}$(6)$,$(7)$ we get: \\
$\hspace*{0.50cm}$ $x\ra (y\Cap (z\Cap x))=x\ra (y\Cap (z^*\Cup x^*)^*)=x\ra (y^*\Cup (x\ra (z^*\ra x^*)^*)^*$ \\
$\hspace*{3.50cm}$ $=x\ra (x\ra ((y^*\ra x^*)^*\Cup (z^*\ra x^*)^*))^*$ (Prop. \ref{qbe-30}$(3)$) \\
$\hspace*{3.50cm}$ $=(x\Cup ((y^*\ra x^*)^*\Cup (z^*\ra x^*)^*))\ra (x\ra ((y^*\ra x^*)^*\Cup (z^*\ra x^*)^*))^*$ \\
$\hspace*{12.50cm}$ (Prop. \ref{gqw-30-10}$(6)$) \\
$\hspace*{3.50cm}$ $=((y^*\ra x^*)^*\Cup (z^*\ra x^*)^*)^*$ (Prop. \ref{gqw-30-10}$(7)$) \\
$\hspace*{3.50cm}$ $=(y^*\ra x^*)\Cap (z^*\ra x^*)=(x\ra y)\Cap (x\ra z)$. \\ 
Hence axiom $(QW_2)$ is satisfied, that is $X$ is an implicative-orthomodular algebra. 
\end{proof}

\begin{remark} \label{gqw-60-10}
Due to Theorem \ref{gqw-60}, an implicative-orthomodular algebra can be also defined as an involutive BE algebra 
satisfying the equivalent conditions $(IOM)$, $(IOM^{'})$, $(IOM^{''})$ (see \cite{Ciu81}). 
\end{remark}

The following two propositions can be proved similarly as in  \cite[Prop. 3.6, 3.10]{Ciu81}. 

\begin{proposition} \label{gqw-50} 
Let $X$ be an IOM algebra. 
The following hold for all $x,y,z\in X$: \\
$(1)$ $(x\ra y)\Cup y=x\ra y;$ \\
$(2)$ $(x\ra y)\ra (y\Cap x)=x;$ \\
$(3)$ $x\ra (y\Cap x)=x\ra y;$ \\
$(4)$ $(z\ra y)\Cup (z\ra x)=z\ra y;$ \\ 
$(5)$ $(x\ra y)^*\Cap x=(x\ra y)^*;$ \\
$(6)$ $x\le y$ iff $y\Cap x=x;$ \\
$(7)$ $x\le_Q y$ and $y\le x$ imply $x=y;$ \\
$(8)$ $x\Cap y\le_Q y\le_Q x\Cup y;$ \\
$(9)$ $(x\Cup y)\ra y=x\ra y;$ \\
$(10)$ $x\Cap y, y\Cap x\le_Q x\ra y.$ 
\end{proposition}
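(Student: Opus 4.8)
The plan is to derive the ten items one after another, each by a short computation that reduces to the defining identity $(QW_2)$ of an IOM algebra in one of its equivalent forms $(IOM)$, $(IOM^{'})$, $(IOM^{''})$ (Lemma~\ref{gqw-30}), fed through the identities for $\Cap$, $\Cup$, $\odot$ already collected in Lemma~\ref{qbe-10} and Propositions~\ref{qbe-20}, \ref{qbe-30}, \ref{gqw-30-10}. The two workhorses are Proposition~\ref{qbe-30}$(8)$, which rewrites any $\Cap$-term as $z\Cap x=(x\ra(x\ra z)^*)^*$ and so translates statements about $\Cap$ back into statements about $\ra$ and $^*$, and Proposition~\ref{gqw-30-10}$(2)$, which promotes $u\le_Q v$ to the absorption law $v\Cup u=v$. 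With these it is natural to prove the items in the order $(1),(5),(2),(3),(9),(6),(7),(4),(8),(10)$, each allowed to invoke the earlier ones.

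I would begin with the ``seed'' identities. For $(1)$, $(IOM^{'})$ gives $y\Cap(x\ra y)=y$, i.e.\ $y\le_Q x\ra y$, and Proposition~\ref{gqw-30-10}$(2)$ turns this into $(x\ra y)\Cup y=x\ra y$; item $(4)$ is an analogous absorption rewriting, and $(9)$ is the De~Morgan dual of $(3)$ obtained via Proposition~\ref{qbe-20}$(3)$ and Lemma~\ref{qbe-10}$(6)$. For $(5)$ one expands $(x\ra y)^*\Cap x$ by Proposition~\ref{qbe-30}$(8)$, pushes the stars around with Lemma~\ref{qbe-10}$(3)$ until the term becomes $\big((x\ra y)\Cup x^*\big)^*$, and then uses $(IOM^{''})$ (which, after a star, says $x^*\le_Q x\ra y$) together with Proposition~\ref{gqw-30-10}$(2)$. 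For $(2)$, again replace $y\Cap x$ using Proposition~\ref{qbe-30}$(8)$, apply Lemma~\ref{qbe-10}$(3)$, recognize the result as $x\Cup(x\ra y)^*$, and finish by $(IOM^{''})$. Item $(3)$ is then one more application of Proposition~\ref{qbe-30}$(8)$ on top of $(5)$.

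For $(6)$, the implication $y\Cap x=x\Rightarrow x\le y$ is Proposition~\ref{qbe-20}$(4)$, and conversely $x\ra y=1$ makes $(x\ra y)^*=0$, whence Proposition~\ref{qbe-30}$(8)$ gives $y\Cap x=(x\ra 0)^*=x^{**}=x$; $(7)$ follows by applying $(6)$ and Proposition~\ref{qbe-20}$(1)$ to both $x\le_Q y$ and $y\le x$. For $(8)$, $x\Cap y\le_Q y$ comes from expanding $(x\Cap y)\Cap y$ by Proposition~\ref{qbe-30}$(8)$ and collapsing the inner $\ra$ with $(3)$, while $y\le_Q x\Cup y$ is the dual statement, run through $(9)$, Proposition~\ref{qbe-30}$(8)$ and Proposition~\ref{gqw-30-10}$(7)$. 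Finally $(10)$: from $(8)$ and its $x\leftrightarrow y$ instance one has $x\Cap y\le_Q y$ and $y\Cap x\le_Q x$, and from $(IOM^{'})$ that $y\le_Q x\ra y$; using Proposition~\ref{qbe-30}$(5)$ together with $(2)$ and $(3)$ to evaluate $(y\Cap x)\Cap(x\ra y)$ and $(x\Cap y)\Cap(x\ra y)$, both collapse to $y\Cap x$ and $x\Cap y$ respectively.

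The only real difficulty is structural rather than computational: $\le_Q$ is known to be merely reflexive and antisymmetric (Proposition~\ref{qbe-20}$(2)$), not transitive, and $\Cap$ is not commutative, so one can neither chain $\le_Q$-inequalities nor freely swap the arguments of $\Cap$. Consequently every place where a plain $\le$ must be upgraded to $\le_Q$, or where a ``wrong-order'' $\Cap$ appears, has to be resolved by hand --- invariably by translating back to $\ra$ through Proposition~\ref{qbe-30}$(8)$ (or to $\odot$ through Proposition~\ref{qbe-30}$(5)$) and invoking one of the $(IOM)$ forms. Keeping the bookkeeping of stars and argument-orders straight in these rewritings, and sequencing the ten items so that each has exactly the tools it needs, is where the work lies; since none of the individual computations runs more than a couple of lines, the statement can indeed be handled ``similarly as in \cite[Prop.~3.6,~3.10]{Ciu81}''.
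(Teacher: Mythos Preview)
The paper gives no proof beyond the reference to \cite[Prop.~3.6,~3.10]{Ciu81}, so there is nothing to compare against at the level of individual steps; your sketch is effectively the missing elaboration, and for items $(1)$--$(3)$, $(5)$--$(9)$ the computations you outline (reduce every $\Cap$ via Proposition~\ref{qbe-30}$(8)$ and then collapse with one of $(IOM)$, $(IOM^{'})$, $(IOM^{''})$, using Proposition~\ref{gqw-30-10}$(2)$ to turn $\le_Q$ into $\Cup$-absorption) are correct and are exactly what the reference does. Your remark that $\le_Q$ is not yet known to be transitive at this point is well taken and your avoidance of transitivity is the right discipline.

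Two places deserve comment. First, item $(4)$ as printed is false: with $z:=1$ it reads $y\Cup x=y$ for all $x,y$, which fails already in any nontrivial involutive BE algebra. This is a typographical slip carried over from the source; the identity intended in \cite{Ciu81} has an extra structure on the second argument (compare Proposition~\ref{gqw-50-10}$(8)$, $(x\ra y)\Cup (x\ra (z\Cap y))=x\ra y$). So your ``analogous absorption rewriting'' cannot be carried out for the printed $(4)$ and should instead target whatever the corrected form is. Second, for $(10)$ your plan is a little thin: the half $y\Cap x\le_Q x\ra y$ drops out immediately from $(2)$ and Proposition~\ref{qbe-30}$(8)$, as you say, but the other half $x\Cap y\le_Q x\ra y$ does not follow just from $(8)$ and $y\le_Q x\ra y$ without transitivity. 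One clean way is to first derive the companion of $(2)$, namely $(y\ra x)\ra(x\Cap y)=y$ (this is Proposition~\ref{gqw-30-10}$(7)$ applied to $x^*,y^*$), and then run the same Proposition~\ref{qbe-30}$(8)$ computation; that yields $x\Cap y\le_Q y\ra x$, and swapping $x,y$ gives the stated inequality.
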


\begin{proposition} \label{gqw-50-10} 
Let $X$ be an IOM algebra. 
The following hold for all $x,y,z\in X$: \\
$(1)$ $(x\Cap y)\Cap y=x\Cap y;$ \\
$(2)$ $x\Cup (y\Cap x)=x;$ \\ 
$(3)$ $x\Cap (y\Cup x)=x;$ \\
$(4)$ $x\Cap y\le_Q y\le_Q x\Cup y;$ \\
$(5)$ $(x\Cap y)\Cap (y\Cap z)=(x\Cap y)\Cap z;$ \\
$(6)$ $(x\Cup y)\Cup (y\Cup z)=(x\Cup y)\Cup z;$ \\
$(7)$ $\le_Q$ is transitive; \\
$(8)$ $(x\ra y)\Cup (x\ra (z\Cap y))=x\ra y;$ \\ 
$(9)$ $(x\ra y)\Cup ((z\ra x)\ra y)=x\ra y;$ \\
$(10)$ $(z\Cap x)\ra (y\Cap x)=(z\Cap x)\ra y;$ \\
$(11)$ $z\Cap ((y^*\ra z)\Cap (x^*\ra y))=z\Cap (x^*\ra y);$ \\
$(12)$ $x\Cup (x\ra y)^*=x;$ \\
$(13)$ $(z\Cup x)\ra (y\Cup x)=z\ra (y\Cup x)$. 
\end{proposition}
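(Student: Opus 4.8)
The plan is to establish the thirteen identities in roughly the stated order, leaning on the equivalent conditions $(IOM)$, $(IOM')$, $(IOM'')$ (Lemma \ref{gqw-30}, Theorem \ref{gqw-60}), the $\le_Q$-monotonicity statements of Proposition \ref{gqw-30-10}, and the arithmetic of $\Cap$, $\Cup$ in Propositions \ref{qbe-20} and \ref{qbe-30}; the argument follows the pattern of \cite[Prop.~3.10]{Ciu81}.

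Many items require almost nothing. Items $(2)$ and $(3)$ are exactly Proposition \ref{gqw-30-10}$(1)$, item $(4)$ is Proposition \ref{gqw-50}$(8)$, and item $(12)$ is condition $(IOM'')$, available by Theorem \ref{gqw-60}. Item $(1)$ is immediate from Proposition \ref{gqw-50}$(8)$: since $x\Cap y\le_Q y$, the definition of $\le_Q$ gives $x\Cap y=(x\Cap y)\Cap y$. Item $(8)$ comes from $z\Cap y\le_Q y$ (Proposition \ref{gqw-50}$(8)$): Proposition \ref{gqw-30-10}$(3)$ yields $x\ra(z\Cap y)\le_Q x\ra y$, and then Proposition \ref{gqw-30-10}$(2)$ gives $(x\ra y)\Cup(x\ra(z\Cap y))=x\ra y$. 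Symmetrically, $(9)$ follows from $x\le_Q z\ra x$ (condition $(IOM')$) together with Proposition \ref{gqw-30-10}$(3)$ and $(2)$.

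The heart of the matter is item $(5)$, $(x\Cap y)\Cap(y\Cap z)=(x\Cap y)\Cap z$. Put $u:=x\Cap y$. Proposition \ref{qbe-30}$(9)$ gives $u\Cap(y\Cap z)=\bigl(((z\ra u)\Cap(z\ra y))\ra z^*\bigr)^*$. Now $u\le_Q y$ by Proposition \ref{gqw-50}$(8)$, so Proposition \ref{gqw-30-10}$(3)$ yields $z\ra u\le_Q z\ra y$, i.e.\ $(z\ra u)\Cap(z\ra y)=z\ra u$; hence $u\Cap(y\Cap z)=((z\ra u)\ra z^*)^*=(z\ra(z\ra u)^*)^*$ by contraposition (a consequence of Lemma \ref{qbe-10}$(6)$), and this is $u\Cap z$ by Proposition \ref{qbe-30}$(8)$. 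Once $(5)$ is in hand: $(6)$ is obtained by taking complements and using Proposition \ref{qbe-20}$(3)$; $(7)$ follows since $x\le_Q y\le_Q z$ gives $x\Cap z=(x\Cap y)\Cap z=(x\Cap y)\Cap(y\Cap z)=(x\Cap y)\Cap y=x\Cap y=x$ by $(5)$ and $(1)$; and $(11)$ follows by noting $z\le_Q y^*\ra z$ (condition $(IOM')$) and applying $(5)$ to $z$, $y^*\ra z$, $x^*\ra y$.

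It remains to treat $(10)$ and $(13)$. For $(10)$, $(z\Cap x)\ra(y\Cap x)=(z\Cap x)\ra y$, write $w:=z\Cap x$; from $w\le_Q x$ one gets $w^*=w^*\Cup x^*$, and then $w\ra(y\Cap x)$ is rewritten successively via Lemma \ref{qbe-10}$(3)$, Proposition \ref{qbe-20}$(8)$, Proposition \ref{gqw-50}$(9)$, Lemma \ref{qbe-10}$(6)$ and Proposition \ref{gqw-50}$(3)$ into $(x\ra z)\ra(x\ra y)$, which equals $(z\Cap x)\ra y$ by Proposition \ref{qbe-20}$(7)$. Finally $(13)$ is the contraposition-dual of $(10)$: by Lemma \ref{qbe-10}$(6)$ and Proposition \ref{qbe-20}$(3)$, both $(z\Cup x)\ra(y\Cup x)$ and $z\ra(y\Cup x)$ reduce to $(y^*\Cap x^*)\ra z^*$, the former after invoking $(10)$ with the arguments $z^*$, $x^*$, $y^*$. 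I expect $(5)$ and $(10)$ to be the only steps demanding real computation; the rest is bookkeeping with the quoted lemmas, and the main subtlety throughout is that $\Cap$ and $\Cup$ are not associative, so every cancellation must be routed through $\le_Q$-monotonicity and the identities of Propositions \ref{qbe-20} and \ref{qbe-30}.
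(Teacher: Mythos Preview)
Your proposal is correct and follows essentially the same approach as the paper: the paper defers items $(1)$--$(12)$ to \cite[Prop.~3.10]{Ciu81} and only writes out $(13)$ explicitly, deriving it from $(10)$ via contraposition exactly as you do. Your detailed treatment of $(5)$ via Proposition~\ref{qbe-30}$(9)$ and of $(10)$ via the chain through Proposition~\ref{qbe-20}$(8)$ and Proposition~\ref{gqw-50}$(9)$ is sound (for $(10)$ one can shorten slightly by applying Proposition~\ref{qbe-20}$(7)$ directly to $(z\Cap x)\ra(y\Cap x)$ and then Proposition~\ref{gqw-50}$(3)$, but your route works).
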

\begin{proof}
$(13)$ Using $(10)$, we get: 
$(z\Cup x)\ra (y\Cup x)=(y\Cup x)^*\ra (z\Cup x)^*=(y^*\Cap x^*)\ra (z^*\Cap x^*)=(y^*\Cap x^*)\ra z^*=
z\ra (y^*\Cap x^*)^*=z\ra (y\Cup x)$.
\end{proof}

\begin{proposition} \label{gqw-70} 
The following pairs of algebras are term-equivalent: \\
$(1)$ pre-Wajsberg algebras and pre-MV algebras; \\
$(2)$ implicative-orthomodular algebras and orthomodular algebras; \\
$(3)$ meta-Wajsberg algebras and meta-MV algebras.
\end{proposition}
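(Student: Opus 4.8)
The plan is to reduce all three statements to the term-equivalence between involutive (left-)BE algebras $(X,\ra,^{*},1)$ and involutive (left-)m-BE algebras $(X,\odot,^{*},1)$ recalled after Definition~\ref{qmv-30}, realized by the mutually inverse transformations $\Phi\colon x\odot y=(x\ra y^{*})^{*}$ and $\Psi\colon x\ra y=(x\odot y^{*})^{*}$. Since $\Phi$ and $\Psi$ already translate the involutive BE-algebra axioms into the involutive m-BE-algebra axioms and back (\cite[Cor.~17.1.3]{Ior35}), it suffices to check that, under this dictionary, the extra identity defining each ``Wajsberg-type'' class is carried exactly onto the extra identity defining the matching ``MV-type'' class. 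The one preliminary step I would isolate is the bookkeeping lemma that $\Phi,\Psi$ intertwine the derived operations and the order: $\odot$ is literally the same on both sides; a direct computation gives that $x\Cup y=(x\ra y)\ra y$ is sent to $((x\odot y^{*})^{*}\odot y^{*})^{*}=(x\odot y^{*})\oplus y=x\Cup_{S}y$, hence by De Morgan (Proposition~\ref{qbe-20}$(3)$) that $x\Cap y=(x^{*}\Cup y^{*})^{*}$ is sent to $x\Cap_{S}y$; and $x\ra y=1$ holds iff $x\odot y^{*}=0$, so $\le$ is preserved.

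With this in hand the three items are one-line translations. For (1): by $\Psi$ together with $(x\Cap y)^{*}=x^{*}\Cup y^{*}$ (Proposition~\ref{qbe-20}$(3)$), the left side of $(QW_1)$, $x\ra(x\Cap y)$, becomes $(x\odot(x^{*}\Cup y^{*}))^{*}$ while $x\ra y$ becomes $(x\odot y^{*})^{*}$; cancelling the outer $^{*}$ and substituting $y\mapsto y^{*}$ turns $(QW_1)$ into $x\odot(x^{*}\Cup y)=x\odot y$, i.e. $(Pmv)$, and the steps reverse. For (3): since $a\ra b=1$ iff $a\odot b^{*}=0$, the identity $(QW_3)$, $(x\Cap y)\ra(y\Cap x)=1$, is literally $(x\Cap y)\odot(y\Cap x)^{*}=0$, i.e. $(\Delta_m)$. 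For (2): rather than translating $(QW_2)$ directly, I would first use Theorem~\ref{gqw-60} (through Lemma~\ref{gqw-30}) to replace $(QW_2)$ by the equivalent condition $(IOM^{''})$, $x\Cup(x\ra y)^{*}=x$; since $(x\ra y)^{*}=x\odot y^{*}$, this reads $x\Cup(x\odot y^{*})=x$, and substituting $y\mapsto y^{*}$ yields $x\Cup(x\odot y)=x$, which is $(Pom)$. Thus in every case an involutive BE algebra satisfies the defining identity of the left-hand class exactly when its associated involutive m-BE algebra satisfies the defining identity of the right-hand class, which gives the three term-equivalences.

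The only real content is the translation lemma for $\Cup,\Cap,\le$; everything after it is De Morgan laws plus the involutivity of $^{*}$. I expect the main (mild) obstacle to be making sure that the $\Cup$ and $\Cap$ occurring in $(Pmv)$, $(Pom)$ and $(\Delta_m)$ in Definition~\ref{qbe-30-1} are exactly the $S$-algebra operations $\Cup_{S},\Cap_{S}$ (equivalently, the images under $\Phi$ of the BE-algebra $\Cup,\Cap$), so that the three translations above are literal identities rather than equalities only up to a further derived identity; this, however, is routine from the definitions of $\Cup$ and $\Cap$ in an involutive m-BE algebra.
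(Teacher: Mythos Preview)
Your proposal is correct and follows essentially the same route as the paper: both reduce the problem to the $\Phi$/$\Psi$ dictionary and then translate each defining identity. The only cosmetic differences are that you make the translation of $\Cup$, $\Cap$ and $\le$ explicit (the paper silently uses the same symbols on both sides), and for part~(2) you go through $(IOM^{''})$ via Theorem~\ref{gqw-60}, whereas the paper works with the equivalent $(IOM)$; both choices amount to the same one-line substitution.
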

\begin{proof} 
We use the mutually inverse transformations $\Phi$ and $\Psi$. \\
$(1)$ Assume that $X$ is a pre-Wajsberg algebra, that is it satisfies condition $(QW_1)$. 
Then we have: \\
$\hspace*{1.00cm}$ $x\ra (x\Cap y)=x\ra y$ iff $x\ra (x^*\Cup y^*)^*=x\ra (y^*)^*$ iff 
                   $(x\odot (x^*\Cup y^*))^*=(x\odot y^*)^*$. \\
Replacing $y$ by $y^*$, we get $x\odot (x^*\Cup y)=x\odot y$, that is $(Pmv)$, hence $X$ is a pre-MV algebra. 
Conversely, if is a pre-MV algebra, then by $(Pmv)$ we have: \\
$\hspace*{1.00cm}$ $x\odot (x^*\Cup y)=x\odot y$ iff $(x\ra (x^*\Cup y)^*)^*=(x\ra y^*)^*$ iff 
                   $x\ra (x\Cap y^*)=x\ra y^*$. \\
Replacing $y$ by $y^*$ we get $(QW_1)$, that is $X$ is a pre-Wajsberg algebra. \\
$(2)$ Suppose that $X$ is an implicative-orthomodular algebra, that is it satisfies condition $(IOM)$. 
It follows that $x^*\Cup (x^*\ra y)^*=x^*$, and replacing $x$ by $x^*$ and $y$ by $y^*$, we get 
$x\Cup (x\ra y^*)^*=x$, so that $x\Cup (x\odot y)=x$. It follows that $X$ satisfies condition $(Pom)$, hence 
it is an orthomodular algebra. 
Conversely, if $X$ satisfies $(Pom)$, then $x\Cup (x\odot y)=x$, so that $x\Cup (x\ra y^*)^*=x$, hence 
$x^*\Cap (x\ra y^*)=x^*$. Replacing $x$ by $x^*$ and $y$ by $y^*$, we get $x=x\Cap (x^*\ra y)$, 
that is condition $(IOM)$ is verified. It follows that $X$ is an implicative-orthomodular algebra. \\
$(3)$ If $X$ is a meta-Wajsberg algebra, then it satisfies $(QW_3)$. 
We can see that $(x\Cap y)\ra (y\Cap x)=1$ is equivalent to $(x\Cap y)\odot (y\Cap x)^*=0$, hence the meta-Wajsberg algebras are equivalent to meta-MV algebras. 
\end{proof}

\begin{theorem} \label{gqw-80} An involutive BE algebra $X$ is an IOM algebra if and only if it satisfies  
the following condition for all $x,y,z\in X$: \\
$(QW_2^{'})$ $x\ra (y\Cap (x\ra z)^*)=(x\ra y)\Cap (x\ra (x\ra z)^*)$.  
\end{theorem}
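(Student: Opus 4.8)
The plan is to prove the equivalence of $(QW_2)$ and $(QW_2^{'})$ by a single change of variable in each direction, keeping all auxiliary work inside results already established in the preliminaries. For the direction from $(QW_2)$ to $(QW_2^{'})$, I would use that an IOM algebra satisfies $(x\ra t)^*\Cap x=(x\ra t)^*$ by Proposition \ref{gqw-50}$(5)$. Substituting $z:=(x\ra t)^*$ in $(QW_2)$, the subterm $z\Cap x$ collapses to $(x\ra t)^*$, so $(QW_2)$ turns into
\[
x\ra\bigl(y\Cap(x\ra t)^*\bigr)=(x\ra y)\Cap\bigl(x\ra(x\ra t)^*\bigr),
\]
which is exactly $(QW_2^{'})$ after renaming $t$ as $z$.

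For the converse, assume $(QW_2^{'})$. The key point is that in any involutive BE algebra, Proposition \ref{qbe-30}$(8)$ gives $t\Cap x=(x\ra(x\ra t)^*)^*$, so plugging $z:=(x\ra t)^*$ into $(QW_2^{'})$ we have $(x\ra z)^*=(x\ra(x\ra t)^*)^*=t\Cap x$, and $(QW_2^{'})$ becomes
\[
x\ra\bigl(y\Cap(t\Cap x)\bigr)=(x\ra y)\Cap\bigl(x\ra(t\Cap x)\bigr).
\]
This differs from $(QW_2)$ only in that the right-hand side carries $x\ra(t\Cap x)$ in place of $x\ra t$. To remove the discrepancy I would first record the special case $z:=0$ of $(QW_2^{'})$: since $(x\ra 0)^*=x$, $x\ra(x\ra 0)^*=x\ra x=1$, and $w\Cap 1=w$ by Proposition \ref{qbe-20}$(6)$, it yields $x\ra(y\Cap x)=x\ra y$ for all $x,y\in X$. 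Applying this identity (with $t$ playing the role of $y$) to rewrite $x\ra(t\Cap x)$ as $x\ra t$ turns the displayed equation into $(QW_2)$, so $X$ is an IOM algebra.

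I do not expect a genuine obstacle here; the only points requiring care are using a fresh variable in the substitutions so that it does not clash with the bound $z$, and making sure that in the converse direction I invoke only the unconditional Propositions \ref{qbe-20}$(6)$ and \ref{qbe-30}$(8)$ and not Proposition \ref{gqw-50}, which presupposes the very property being proved. Beyond that the argument is a short chain of rewritings.
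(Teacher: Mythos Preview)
Your argument is correct and follows essentially the same route as the paper: both directions hinge on the substitution $z:=(x\ra t)^*$, the identity $t\Cap x=(x\ra(x\ra t)^*)^*$ from Proposition~\ref{qbe-30}$(8)$, and the special case $x\ra(y\Cap x)=x\ra y$ obtained by setting $z:=0$ in $(QW_2')$ (or $z:=1$ in $(QW_2)$). The only cosmetic difference is that in the forward direction you shortcut the simplification of $(x\ra t)^*\Cap x$ by citing Proposition~\ref{gqw-50}$(5)$, whereas the paper rederives it from Proposition~\ref{qbe-30}$(8)$ and the special case; since $X$ is assumed IOM there, your use of Proposition~\ref{gqw-50} is legitimate.
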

\begin{proof} 
Let $X$ be an involutive BE algebra.
We remark that, taking z:=1 in $(QW_2)$ and $z:=0$ in $(QW_2^{'})$, both conditions imply: \\
$(*)$ $x\ra (y\Cap x)=x\ra y$, for all $x,y\in X$. \\
Assume that $X$ satisfies condition $(QW_2)$. Then we have: \\
$\hspace*{1.00cm}$ $(x\ra y)\Cap (x\ra (x\ra z)^*)=x\ra (y\Cap ((x\ra z)^*\Cap x))$ (by $(QW_2)$) \\
$\hspace*{3.00cm}$ $=x\ra (y\Cap (x\ra (x\ra (x\ra z)^*)^*)^*)$ (Prop. \ref{qbe-30}$(8)$, for $z:=(x\ra z)^*$) \\
$\hspace*{3.00cm}$ $=x\ra (y\Cap (x\ra (z\Cap x))^*)$ (Prop. \ref{qbe-30}$(8)$) \\
$\hspace*{3.00cm}$ $=x\ra (y\Cap (x\ra z)^*)$  (by $(*))$, \\ 
hence condition $(QW_2^{'})$ is satisfied. 
Conversely, if $X$ satisfies $(QW_2^{'})$, we get: \\
$\hspace*{1.00cm}$ $x\ra (y\Cap (z\Cap x))=x\ra (y\Cap (x\ra (x\ra z)^*)^*)$ (Prop. \ref{qbe-30}$(8)$) \\ 
$\hspace*{4.00cm}$ $=(x\ra y)\Cap (x\ra (x\ra (x\ra z)^*)^*)$ (by $(QW_2^{'})$, for $z:=(x\ra z)^*$) \\
$\hspace*{4.00cm}$ $=(x\ra y)\Cap (x\ra (z\Cap x))$ (Prop. \ref{qbe-30}$(8)$) \\ 
$\hspace*{4.00cm}$ $=(x\ra y)\Cap(x\ra z)$ (by $(*)$). \\
Thus $X$ satisfies condition $(QW_2)$.  
\end{proof}

\begin{theorem} \label{gqw-90} An implicative-orthomodular algebra is a quantum-Wajsberg algebra if and only if 
it is a meta-Wajsberg algebra.  
\end{theorem}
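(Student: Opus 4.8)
\textbf{This is a plan, not the final write-up.} The plan is to split into the two implications and to lean on the bookkeeping fact that, by definition, a quantum-Wajsberg algebra is precisely an involutive BE algebra satisfying \emph{both} $(QW_1)$ and $(QW_2)$, whereas an implicative-orthomodular algebra is exactly one satisfying $(QW_2)$. Hence the real content of the theorem is: \emph{in the presence of} $(QW_2)$, \emph{condition} $(QW_1)$ \emph{is equivalent to} $(QW_3)$.

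The forward implication is essentially free: if $X$ is a quantum-Wajsberg algebra then it is a meta-Wajsberg algebra by Corollary~\ref{gqw-20-10}, and in fact the implicative-orthomodular hypothesis is not even needed here. For the converse I would assume that $X$ is implicative-orthomodular and satisfies $(QW_3)$, and aim to derive $(QW_1)$, that is, $x\ra(x\Cap y)=x\ra y$. The crucial first step is to establish the auxiliary identity
\[
(x\Cap y)\Cap x = y\Cap x \qquad\text{for all } x,y\in X .
\]
I expect this to come out formally from the material already available: applying $(QW_3)$ with $x$ and $y$ interchanged gives $y\Cap x\le x\Cap y$, hence $(x\Cap y)\Cap(y\Cap x)=y\Cap x$ by Proposition~\ref{gqw-50}$(6)$; on the other hand $(x\Cap y)\Cap(y\Cap x)=(x\Cap y)\Cap x$ by Proposition~\ref{gqw-50-10}$(5)$; comparing the two yields the identity. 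Once it is in hand, $(QW_1)$ follows by a two-step rewrite using the implicative-orthomodular identity $x\ra(w\Cap x)=x\ra w$ of Proposition~\ref{gqw-50}$(3)$: taking $w:=x\Cap y$ gives $x\ra\big((x\Cap y)\Cap x\big)=x\ra(x\Cap y)$; substituting the auxiliary identity turns the left-hand side into $x\ra(y\Cap x)$, and one more application of Proposition~\ref{gqw-50}$(3)$ (now with $w:=y$) rewrites that as $x\ra y$. Thus $x\ra(x\Cap y)=x\ra y$, i.e.\ $(QW_1)$ holds; since $(QW_2)$ holds by hypothesis, $X$ satisfies $(QW)$ and is a quantum-Wajsberg algebra.

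The one place that needs care is the auxiliary identity $(x\Cap y)\Cap x=y\Cap x$: it depends on having simultaneously the ``half-commutativity'' $y\Cap x\le x\Cap y$ coming from $(QW_3)$, the weak associativity $(x\Cap y)\Cap(y\Cap z)=(x\Cap y)\Cap z$, and the characterization $a\le b\iff b\Cap a=a$ valid in implicative-orthomodular algebras, all of which are recorded in Propositions~\ref{gqw-50} and~\ref{gqw-50-10} — so the only genuine risk is that one of them is required in a slightly different instantiation than the stated one, which would then have to be handled on the side. Everything beyond this identity is routine rewriting with the listed identities, so I expect no further obstacle.
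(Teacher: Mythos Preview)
Your proposal is correct but follows a genuinely different route from the paper for the substantive direction (IOM + $(QW_3)\Rightarrow(QW_1)$). The paper proceeds order-theoretically: from $(QW_3)$ it computes $(y\Cap x)\ra(x\Cap y)=(x\ra y)\ra(x\ra(x\Cap y))=1$, so $x\ra y\le x\ra(x\Cap y)$; in the other direction $x\Cap y\le_Q y$ gives $x\ra(x\Cap y)\le_Q x\ra y$ via the $\le_Q$-monotonicity of Proposition~\ref{gqw-30-10}$(3)$, and then the mixed antisymmetry of Proposition~\ref{gqw-50}$(7)$ (``$a\le_Q b$ and $b\le a$ imply $a=b$'') forces equality. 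Your argument instead stays purely equational: you first extract the auxiliary identity $(x\Cap y)\Cap x=y\Cap x$ from $(QW_3)$ together with Propositions~\ref{gqw-50}$(6)$ and~\ref{gqw-50-10}$(5)$, and then reduce $(QW_1)$ to two applications of Proposition~\ref{gqw-50}$(3)$. The advantage of your route is that it avoids the interplay between $\le$ and $\le_Q$ and keeps the proof as a short chain of rewrites; the paper's route, on the other hand, makes the underlying order structure visible and uses less of the identity toolbox (it does not need the weak associativity $(5)$ of Proposition~\ref{gqw-50-10}). Both rely essentially on the same IOM facts, just packaged differently. Your caution about the auxiliary identity is well placed but unnecessary: the instantiations you cite are exactly the stated ones, so no side argument is needed.
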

\begin{proof} 
Let $X$ be an implicative-orthomodular algebra satisfying condition $(QW_3)$. It follows that: \\
$\hspace*{2.00cm}$ $1=(y\Cap x)\ra (x\Cap y)=(x\Cap y)^*\ra (y\Cap x)^*$ \\
$\hspace*{2.30cm}$ $=(x\Cap y)^*\ra ((y^*\ra x^*)\ra x^*)=(y^*\ra x^*)\ra ((x\Cap y)^*\ra x^*)$ \\
$\hspace*{2.30cm}$ $=(x\ra y)\ra (x\ra (x\Cap y))$, \\ 
hence $x\ra y\le x\ra (x\Cap y)$. 
Since $x\Cap y\le_Q y$ implies $x\ra (x\Cap y)\le_Q x\ra y$, by Proposition \ref{gqw-50}$(7)$ 
we get $x\ra (x\Cap y)=x\ra y$, hence $X$ satisfies condition $(QW_1)$. 
By definition, condition $(QW_2)$ is also satisfied, so that $X$ is a quantum-Wajsberg algebra. \\
Conversely, if $X$ is a quantum-Wajsberg algebra, then it is an implicative-orthomodular algebra, and by 
condition $(QW_1)$ we get: \\
$\hspace*{1.00cm}$ $(x\Cap y)\ra (y\Cap x)=(y\Cap x)^*\ra (x\Cap y)^*=(y\Cap x)^*\ra ((x^*\ra y^*)\ra y^*)$ \\ 
$\hspace*{4.00cm}$ $=(x^*\ra y^*)\ra ((y\Cap x)^*\ra y^*)=(y\ra x)\ra (y\ra (y\Cap x))$ \\
$\hspace*{4.00cm}$ $=(y\ra x)\ra (y\ra x)=1$, \\
hence condition $(QW_3)$ is verified. It follows that $X$ is a meta-Wajsberg algebra. 
\end{proof}

\begin{definition} \label{gqw-100} 
\emph{
A BE algebra is called \emph{implicative} if it satisfies the following condition: for all $x,y\in X$, \\
$(Pimpl)$ $(x\ra y)\ra x=x$. 
}
\end{definition}

\begin{lemma} \label{gqw-110} Let $(X,\ra,^*,1)$ be an involutive implicative BE algebra. 
The following hold for all $x,y\in X$. \\
$(1)$ 
$x^*\ra x=x$, or equivalently, $x\ra x^*=x^*;$ \\
$(2)$ 
$x\ra (x\ra y)=x\ra y;$ \\
$(3)$ $x\ra (y\ra x)^*=x^*;$ \\
$(4)$ $x\ra (y\ra x^*)=y\ra x^*;$ \\
$(5)$ $(y\ra x^*)\ra x=x$.  
\end{lemma}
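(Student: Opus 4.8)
The engine of the proof is the implicativity axiom $(Pimpl)$, used together with the involutive identities of Lemma~\ref{qbe-10} (chiefly parts $(3)$ and $(6)$) and axiom $(BE_4)$. The plan is to establish $(1)$ first, and then reduce each of the remaining items either to $(1)$ by a $(BE_4)$-rewriting, or directly to an instance of $(Pimpl)$ after a change of variable via the involution.

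For $(1)$ I would simply put $y:=0$ in $(Pimpl)$: since $x\ra 0=x^*$, this reads $x^*\ra x=x$. Replacing $x$ by $x^*$ and using $x^{**}=x$ yields the equivalent form $x\ra x^*=x^*$ (and conversely), so the two displayed equalities in $(1)$ are interderivable.

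For $(4)$, $(BE_4)$ moves the outer $x$ past $y$, giving $x\ra(y\ra x^*)=y\ra(x\ra x^*)$, and then $(1)$ collapses $x\ra x^*$ to $x^*$. For $(2)$ the same idea works once the inner implication is put in the right form: by Lemma~\ref{qbe-10}$(6)$ we have $x\ra y=y^*\ra x^*$, so $x\ra(x\ra y)=x\ra(y^*\ra x^*)=y^*\ra(x\ra x^*)=y^*\ra x^*=x\ra y$, using $(BE_4)$, then $(1)$, then Lemma~\ref{qbe-10}$(6)$ again. It is worth stressing that $(2)$ must be obtained as a genuine equality rather than from the two inequalities $x\ra y\le x\ra(x\ra y)$ and its converse, because $\le$ need not be antisymmetric in a BE algebra; the $(BE_4)$-plus-Lemma~$(6)$ rewriting is precisely what delivers the equality cleanly.

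For $(3)$ and $(5)$ the strategy is to massage the left-hand side into the shape $(u\ra v)\ra u$ so that $(Pimpl)$ applies verbatim. In $(3)$, Lemma~\ref{qbe-10}$(3)$ turns $x\ra(y\ra x)^*$ into $(y\ra x)\ra x^*$, and Lemma~\ref{qbe-10}$(6)$ rewrites $y\ra x$ as $x^*\ra y^*$; the result $(x^*\ra y^*)\ra x^*$ is an instance of $(Pimpl)$ and so equals $x^*$. In $(5)$, Lemma~\ref{qbe-10}$(3)$ rewrites $y\ra x^*$ as $x\ra y^*$, whence $(y\ra x^*)\ra x=(x\ra y^*)\ra x$, again an instance of $(Pimpl)$. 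The only step carrying real content is $(1)$; the rest is bookkeeping, and the sole point requiring care is the antisymmetry remark for $(2)$ noted above.
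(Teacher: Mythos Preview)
Your proof is correct and follows essentially the same approach as the paper: identical arguments for $(1)$, $(2)$, $(4)$, and for $(3)$ and $(5)$ you rewrite the left-hand side into the form $(u\ra v)\ra u$ via Lemma~\ref{qbe-10}$(3)$,$(6)$ and apply $(Pimpl)$ directly, whereas the paper reaches the same instances of $(Pimpl)$ by an iff-chain and a variable substitution (and derives $(5)$ from $(3)$ rather than from $(Pimpl)$ directly). These are cosmetic differences only.
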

\begin{proof} 
$(1)$ Taking $y:=0$ in $(Pimpl)$ we have $x^*\ra x=x$, and replacing $x$ by $x^*$ in this identity, 
we get $x\ra x^*=x^*$. \\
$(2)$ Using $(1)$, we have: $x\ra (x\ra y)=x\ra (y^*\ra x^*)=y^*\ra (x\ra x^*)=y^*\ra x^*=x\ra y$. \\
$(3)$ By $(Pimpl)$, we get: \\
$\hspace*{1cm}$ $(x\ra y)\ra x=x$ iff $x^*\ra (x\ra y)^*=x$ iff $x^*\ra (y^*\ra x^*)^*=x$, \\
and replacing $x$ by $x^*$ and $y$ by $y^*$, it follows that $x\ra (y\ra x)^*=x^*$. \\
$(4)$ Using $(1)$, we get $x\ra (y\ra x^*)=y\ra (x\ra x^*)=y\ra x^*;$ \\
$(5)$ By $(3)$, we have $(y\ra x^*)\ra x=x^*\ra (y\ra x^*)^*=x$. 
\end{proof}

\begin{theorem} \label{gqw-120} Let $(X,\ra,^*,1)$ be an involutive implicative BE algebra. 
Then the implicative-orthomodular algebra $(X,\ra,^*,1)$ is a quantum-Wajsberg algebra.  
\end{theorem}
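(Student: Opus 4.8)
The plan is as follows. Since $X$ is an implicative-orthomodular algebra it satisfies axiom $(QW_2)$, and since $(QW)$ is equivalent to the conjunction of $(QW_1)$ and $(QW_2)$, it suffices to prove that $X$ also satisfies $(QW_1)$, that is, $x\ra(x\Cap y)=x\ra y$ for all $x,y\in X$. (Equivalently, by Theorem \ref{gqw-90} it would be enough to show that $X$ is a meta-Wajsberg algebra, i.e. that $(QW_3)$ holds; the two routes are interchangeable, and I would carry out the one through $(QW_1)$, which keeps the newly-established implicative identities of Lemma \ref{gqw-110} in play.)

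First I would record the ``easy half'' of $(QW_1)$, namely $x\ra(x\Cap y)\le_Q x\ra y$. By Proposition \ref{gqw-50}$(10)$ we have $x\Cap y\le_Q x\ra y$, hence by the monotonicity statement in Proposition \ref{gqw-30-10}$(3)$ we get $x\ra(x\Cap y)\le_Q x\ra(x\ra y)$, and by the implicative contraction law of Lemma \ref{gqw-110}$(2)$ the right-hand side is just $x\ra y$. Thus $x\ra(x\Cap y)\le_Q x\ra y$, and in particular $x\ra(x\Cap y)\le x\ra y$.

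For the reverse inequality I would prove $x\ra y\le x\ra(x\Cap y)$, which by $(BE_4)$ is the same as $x\le(x\ra y)\ra(x\Cap y)$. The idea is to transport the identity $(x\ra y)\ra(y\Cap x)=x$ of Proposition \ref{gqw-50}$(2)$ across the swap of $x\Cap y$ and $y\Cap x$. Using $x\Cap y=(y\ra(y\ra x)^*)^*$ and $y\Cap x=(x\ra(x\ra y)^*)^*$ from Proposition \ref{qbe-30}$(8)$ together with the $\ra$/$^*$-symmetries of Lemma \ref{qbe-10}, one rewrites $(x\ra y)\ra(x\Cap y)$ and then simplifies the resulting inner implications by means of the implicative laws $x^*\ra x=x$, $x\ra(x\ra y)=x\ra y$ and $(y\ra x^*)\ra x=x$ (Lemma \ref{gqw-110}), in combination with the IOM-identities $(IOM)$, $(IOM')$, $(IOM'')$ of Lemma \ref{gqw-30} and the cancellation law of Proposition \ref{qbe-30}$(1)$; this should collapse $(x\ra y)\ra(x\Cap y)$ to $(x\ra y)\ra(y\Cap x)=x$, giving $x\le(x\ra y)\ra(x\Cap y)$ and hence $x\ra y\le x\ra(x\Cap y)$.

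Combining the two inequalities via Proposition \ref{gqw-50}$(7)$ (if $a\le_Q b$ and $b\le a$ then $a=b$), with $a:=x\ra(x\Cap y)$ and $b:=x\ra y$, yields $x\ra(x\Cap y)=x\ra y$, i.e. $(QW_1)$; together with $(QW_2)$ this is $(QW)$, so $X$ is a quantum-Wajsberg algebra. I expect the reverse inequality in the previous paragraph to be the main obstacle: the asymmetry between $x\Cap y$ and $y\Cap x$ cannot be dissolved by purely formal manipulation of $\Cap$ and $\Cup$, since every such attempt merely re-derives the mutual equivalence of $(QW_1)$, $(QW_3)$ and the meta-Wajsberg axiom. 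It is exactly here that the hypothesis ``$X$ is implicative'' must be used in an essential way, through the identities of Lemma \ref{gqw-110}.
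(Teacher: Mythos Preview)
Your overall strategy --- reduce to $(QW_1)$ and establish $x\ra(x\Cap y)=x\ra y$ --- is exactly right, and your ``easy half'' $x\ra(x\Cap y)\le_Q x\ra y$ is correct (in fact slightly more direct via $x\Cap y\le_Q y$ from Proposition~\ref{gqw-50}$(8)$ and monotonicity). The problem is the reverse inequality: you do not prove it. Saying that one ``rewrites $(x\ra y)\ra(x\Cap y)$ and then simplifies \ldots\ this should collapse to $(x\ra y)\ra(y\Cap x)=x$'' is a statement of hope, not a proof, and you explicitly flag it as the main obstacle. Listing Lemma~\ref{qbe-10}, Lemma~\ref{gqw-110}, $(IOM)$--$(IOM'')$ and the cancellation law does not constitute a derivation; none of those identities, taken individually, breaks the asymmetry between $x\Cap y$ and $y\Cap x$, and you give no indication of the order or combination in which they would do so. As written, the argument has a gap exactly at the point where the whole content of the theorem lies.

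The paper's proof avoids the two-inequality split altogether and computes $x\ra(x\Cap y)$ directly as $x\ra y$. The key move --- which your sketch does not identify --- is the substitution $x=(x\ra(x^*\ra y)^*)^*$, valid by $(Pimpl)$. Writing $x\ra(x\Cap y^*)=\bigl(x\ra(x^*\ra y)^*\bigr)^*\ra(x^*\Cup y)^*$ and applying Lemma~\ref{qbe-10}$(7)$ pulls the outer implication inside, after which standard involutive-BE rewrites turn the inner expression into $\bigl(y\Cap(x^*\ra y)\bigr)^*$, which collapses to $y^*$ by $(IOM')$. That single substitution is precisely the ``essential use of implicativity'' you say must occur; without it (or an equivalent trick) your reverse inequality cannot be closed.
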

\begin{proof} 
Let $X$ be an implicative-orthomodular algebra, that is $X$ satisfies conditions $(QW_2)$ and $(IOM^{'})$. 
By $(Pimpl)$, we get $(x^*\ra y)\ra x^*=x^*$, so that $(x\ra (x^*\ra y)^*)^*=x$. 
Taking into consideration this result, we get: \\
$\hspace*{1.80cm}$ $x\ra (x\Cap y^*)=(x\ra (x^*\ra y)^*)^*\ra (x^*\Cup y)^*$ \\
$\hspace*{4.00cm}$ $=x\ra ((x^*\ra y)\ra (x^*\Cup y)^*)$ (Lemma \ref{qbe-10}$(7)$) \\
$\hspace*{4.00cm}$ $=x\ra (((x^*\ra y)\ra y)\ra (x^*\ra y)^*)$ \\
$\hspace*{4.00cm}$ $=x\ra ((y^*\ra (x^*\ra y)^*)\ra (x^*\ra y)^*)$ \\
$\hspace*{4.00cm}$ $=x\ra (y^*\Cup (x^*\ra y)^*)$ \\
$\hspace*{4.00cm}$ $=x\ra (y\Cap (x^*\ra y))^*=x\ra y^*$ (by $(IOM^{'})$ \\
for all $x,y \in X$. Replacing $y$ by $y^*$ we get $x\ra (x\Cap y)=x\ra y$, that is $X$ satisfies condition $(QW_1)$. 
Hence $X$ is a quantum-Wajsberg algebra. 
\end{proof}

\begin{corollary} \label{gqw-130} If the involutive BE algebra $X$ is implicative, then the 
quantum-Wajsberg algebra coincides with the implicative-orthomodular algebra. 
\end{corollary}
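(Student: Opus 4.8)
The plan is to recognize that this corollary is simply the conjunction of two inclusions, only one of which uses the implicativity hypothesis, and that the substantive half has already been established. First, I would note that on \emph{any} involutive BE algebra every quantum-Wajsberg algebra is an implicative-orthomodular algebra: indeed, by Remarks~\ref{gqw-10-10}$(2)$ (equivalently, because $(QW)$ decomposes into the pair $(QW_1)$, $(QW_2)$ and in particular entails $(QW_2)$), the axiom defining IOM algebras holds in every QW algebra. This direction is purely formal and does not invoke $(Pimpl)$.

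For the reverse inclusion I would appeal directly to Theorem~\ref{gqw-120}: assume $(X,\ra,^*,1)$ is an involutive implicative BE algebra which is an implicative-orthomodular algebra, i.e.\ it satisfies $(QW_2)$ and hence, by Theorem~\ref{gqw-60}, also $(IOM^{'})$. The computation carried out in Theorem~\ref{gqw-120} — rewriting $x\ra(x\Cap y^*)$ step by step, using Lemma~\ref{gqw-110} (which packages the consequences of implicativity, e.g.\ $x^*\ra x=x$ and $x\ra(y\ra x)^*=x^*$) together with Lemma~\ref{qbe-10}$(7)$ and $(IOM^{'})$ — yields $x\ra(x\Cap y^*)=x\ra y^*$, i.e.\ condition $(QW_1)$. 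Since $(QW)$ is equivalent to $(QW_1)\,\&\,(QW_2)$, it follows that $X$ is a quantum-Wajsberg algebra. Combining the two inclusions, on the class of involutive implicative BE algebras the notions of quantum-Wajsberg algebra and of implicative-orthomodular algebra cut out exactly the same subclass, which is the assertion.

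I do not expect any genuine obstacle here: the only nontrivial content is the derivation of $(QW_1)$ from $(IOM^{'})$ under implicativity, and that is precisely Theorem~\ref{gqw-120}, which may be quoted. Thus the corollary reduces to a one-line argument ``$\Rightarrow$'' by Remarks~\ref{gqw-10-10}$(2)$ and ``$\Leftarrow$'' by Theorem~\ref{gqw-120}''. If a more uniform presentation were wanted, one could instead phrase it through the characterizations: IOM $\Leftrightarrow$ $(IOM^{'})$ (Theorem~\ref{gqw-60}), and IOM $+$ implicativity $\Rightarrow$ $(QW_1)$ $\Rightarrow$ QW, but this is cosmetic.
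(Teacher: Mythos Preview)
Your proposal is correct and matches the paper's own argument exactly: the paper's proof of Corollary~\ref{gqw-130} is the one-liner ``It follows by Remarks~\ref{gqw-10-10}$(2)$ and Theorem~\ref{gqw-120}'', which is precisely the two inclusions you isolate. Your additional remarks (unpacking Theorem~\ref{gqw-120} via Lemma~\ref{gqw-110} and $(IOM^{'})$) are accurate but superfluous for the corollary itself.
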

\begin{proof} It follows by Remarks \ref{gqw-10-10}$(2)$ and Theorem \ref{gqw-120}. 
\end{proof}

\begin{corollary} \label{gqw-130-10} If the involutive BE algebra $X$ is implicative, then the 
implicative-orthomodular algebra is a pre-Wajsberg algebra and a meta-Wajsberg algebra. 
\end{corollary}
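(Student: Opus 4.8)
The plan is to obtain this as a bookkeeping consequence of the results just established for implicative involutive BE algebras, with no new computation required. The engine is Theorem~\ref{gqw-120}: as soon as the underlying involutive BE algebra $X$ is implicative, any implicative-orthomodular structure on $X$ (which by definition satisfies $(QW_2)$, equivalently $(IOM)$, $(IOM^{'})$, $(IOM^{''})$) in fact also satisfies $(QW_1)$, and hence is a quantum-Wajsberg algebra. So the first step I would take is simply to invoke Theorem~\ref{gqw-120} and record that, under the hypothesis, the IOM algebra $X$ is a QW algebra.

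With that in hand, both assertions follow at once. For the pre-Wajsberg claim, recall that the defining condition of a pre-Wajsberg algebra is precisely $(QW_1)$; since a quantum-Wajsberg algebra satisfies $(QW_1)$ (this is exactly Remarks~\ref{gqw-10-10}$(2)$, that every QW algebra is a pre-W algebra), $X$ is a pre-Wajsberg algebra. For the meta-Wajsberg claim I would then appeal to Proposition~\ref{gqw-20}, which says every pre-Wajsberg algebra is a meta-Wajsberg algebra; alternatively one can quote Corollary~\ref{gqw-20-10} directly, that every quantum-Wajsberg algebra is a meta-Wajsberg algebra. Either route closes the argument.

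I do not expect any genuine obstacle: the real content — deriving $(QW_1)$ from $(QW_2)$ and $(IOM^{'})$ via the implicativity identities of Lemma~\ref{gqw-110} — has already been carried out in Theorem~\ref{gqw-120}, and the present statement only repackages it together with the implications ``QW $\Rightarrow$ pre-W'' (Remarks~\ref{gqw-10-10}$(2)$) and ``pre-W $\Rightarrow$ meta-W'' (Proposition~\ref{gqw-20}). The one thing to be slightly careful about is phrasing: the hypothesis is that $X$, as an involutive BE algebra, is implicative, and that $X$ carries an implicative-orthomodular structure; the conclusion is about that same structure, so no choice or coherence issue arises.
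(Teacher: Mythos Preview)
Your proposal is correct and follows essentially the same route as the paper: the paper's one-line proof cites Remarks~\ref{gqw-10-10}$(2)$ and Corollary~\ref{gqw-130} (the latter being just the packaging of Theorem~\ref{gqw-120} you invoke), obtaining IOM $\Rightarrow$ QW $\Rightarrow$ pre-W, with meta-W then implicit via Proposition~\ref{gqw-20}/Corollary~\ref{gqw-20-10} exactly as you spell out. The only difference is cosmetic---you cite Theorem~\ref{gqw-120} directly rather than through its corollary, and you make the meta-W step explicit.
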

\begin{proof} It follows by Remarks \ref{gqw-10-10}$(2)$ and Corollary \ref{gqw-130}. 
\end{proof}

\begin{theorem} \label{gqw-140} Let $(X,\ra,^*,1)$ be an involutive implicative BE algebra. 
Then the pre-Wajsberg algebra $(X,\ra,^*,1)$ is an implicative-orthomodular algebra.  
\end{theorem}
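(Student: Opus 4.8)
The plan is to prove that the pre-Wajsberg algebra $X$ satisfies one of the conditions $(IOM)$, $(IOM')$, $(IOM'')$, and then to invoke Theorem \ref{gqw-60} to conclude that $X$ is an implicative-orthomodular algebra. I would aim for $(IOM'')$, i.e. $x\Cup (x\ra y)^*=x$, because the target value $x$ is exactly what one extracts from $(Pimpl)$. Observe that $x\le x\Cup (x\ra y)^*$ comes for free from Lemma \ref{qbe-10}$(2)$, but the opposite inequality alone would not suffice, since $\le$ need not be antisymmetric; so the argument must produce the equality on the nose.

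The first ingredient is an auxiliary identity holding in every involutive implicative BE algebra, namely $x\Cap (x\ra z)=z\Cap x$. Indeed, $(Pimpl)$ gives $(x\ra z)\ra x=x$, whence $x^*\ra (x\ra z)^*=x$ by Lemma \ref{qbe-10}$(3)$; expanding the definition of $\Cap$ then yields $x\Cap (x\ra z)=(x\ra (x\ra z)^*)^*$, and Proposition \ref{qbe-30}$(8)$ recognizes the right-hand side as $z\Cap x$. The second ingredient is the ``right-handed'' companion of $(QW_1)$: substituting $y:=x\ra z$ in $(QW_1)$ gives $x\ra (x\Cap (x\ra z))=x\ra (x\ra z)$, and using the auxiliary identity on the left and Lemma \ref{gqw-110}$(2)$ on the right turns this into $x\ra (z\Cap x)=x\ra z$.

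Finally I would expand $x\Cup (x\ra y)^*=(x\ra (x\ra y)^*)\ra (x\ra y)^*$ and, via Lemma \ref{qbe-10}$(3)$,$(6)$ and Proposition \ref{qbe-20}$(3)$, rewrite $x\ra (x\ra y)^*=(y\Cap x)^*$, so that $x\Cup (x\ra y)^*=(x\ra y)\ra (y\Cap x)$. Replacing $y\Cap x$ by $x\Cap (x\ra y)$ (the auxiliary identity) and then applying the right-handed form of $(QW_1)$ (with $x\ra y$ in the role of $x$ and $x$ in the role of $z$) collapses this to $(x\ra y)\ra x$, which equals $x$ by $(Pimpl)$. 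Hence $(IOM'')$ holds and Theorem \ref{gqw-60} finishes the proof. I expect the main obstacle to be conceptual, not computational: direct manipulation of any $(IOM)$-type identity is circular (it always reduces to itself), so the real point is to see that $(QW_1)$ must be combined with implicativity to first manufacture the symmetric companion $x\ra (z\Cap x)=x\ra z$, which is precisely the identity that lets $(Pimpl)$ act.
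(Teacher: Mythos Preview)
Your argument is correct. You first extract the auxiliary identity $x\Cap (x\ra z)=z\Cap x$ from $(Pimpl)$ via Proposition~\ref{qbe-30}$(8)$, then feed this into $(QW_1)$ with $y:=x\ra z$ (using Lemma~\ref{gqw-110}$(2)$) to obtain the right-handed companion $x\ra(z\Cap x)=x\ra z$, and finally compute $x\Cup(x\ra y)^*=(x\ra y)\ra(y\Cap x)=(x\ra y)\ra(x\Cap(x\ra y))=(x\ra y)\ra x=x$. Every step checks.

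The paper's proof reaches the same target $(IOM'')$ but by a different route: it rewrites $(QW_1)$ as $x\ra(y\ra(y\ra x)^*)^*=x\ra y$, substitutes $y:=y\ra x^*$, simplifies with Lemma~\ref{gqw-110}$(4)$,$(5)$ to obtain $x\ra(x\ra(y\ra x^*)^*)^*=y\ra x^*$, and then performs a second substitution $x:=x\ra y^*$, $y:=x^*$ together with $(Pimpl)$ to land on $x\Cup(x\ra y)^*=x$. Your approach is more structural: isolating the intermediate identity $x\ra(z\Cap x)=x\ra z$ makes transparent \emph{why} the argument works (it is exactly Proposition~\ref{gqw-50}$(3)$, here obtained without assuming $(IOM)$), and the final computation then becomes a one-line application of $(Pimpl)$. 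The paper's chain of substitutions is shorter in raw length but less illuminating; your version explains the mechanism and would generalize more readily.
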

\begin{proof} 
By hypothesis, $X$ satisfies conditions $(QW_1)$ and $(Pimpl)$. \\
By $(QW_1)$, $x\ra (x\Cap y)=x\ra y$, so that: \\
$\hspace*{2cm}$ $x\ra ((x^*\ra y^*)\ra y^*)^*=x\ra y$. \\ 
It follows that: \\
$\hspace*{2cm}$ $x\ra (y\ra (y\ra x)^*)^*=x\ra y$, \\
and replacing $y$ by $y\ra x^*$ we have: \\
$\hspace*{2cm}$ $x\ra ((y\ra x^*)\ra ((y\ra x^*)\ra x)^*)^*=x\ra (y\ra x^*)$. \\ 
By Lemma \ref{gqw-110}$(5)$,$(4)$, $(y\ra x^*)\ra x=x$, and we get
$x\ra (x\ra (y\ra x^*)^*)^*=y\ra x^*$. \\
Replacing $x$ by $x\ra y^*$ and $y$ by $x^*$ in the last identity, we have: \\
$\hspace*{2cm}$ $(x\ra y^*)\ra ((x\ra y^*)\ra (x^*\ra (x\ra y^*)^*)^*)^*=x^*\ra (x\ra y^*)^*$. \\
Since by $(Pimpl)$, $x^*\ra (x\ra y^*)^*=(x\ra y^*)\ra x=x$, we get 
$(x\ra y^*)\ra ((x\ra y^*)\ra x^*)^*=x$, so that $((x\ra y^*)\ra x^*)\ra (x\ra y^*)^*=x$, thus 
$(x\ra (x\ra y^*)^*)\ra (x\ra y^*)^*=x$. \\
Hence $x\Cup (x\ra y^*)^*=x$, for all $x,y\in X$, and replacing $y$ by $y^*$, we get $x\Cup (x\ra y)^*=x$. 
It follows that $X$ satisfies condition $(IOM^{''})$, that is $X$ is an implicative-orthomodular algebra. 
\end{proof}

\begin{corollary} \label{gqw-160} If the involutive BE algebra $X$ is implicative, then the 
implicative-orthomodular algebra coincides to the pre-Wajsberg algebra. 
\end{corollary}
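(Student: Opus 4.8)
The plan is to prove the asserted equality of classes by establishing the two inclusions separately, each of which is already available from the results proved above. Throughout, $X$ denotes an involutive implicative BE algebra, so that $(Pimpl)$ and all the identities of Lemma \ref{gqw-110} are at our disposal, and "coincides" is to be read as equality of the two subclasses of this fixed ambient class.

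First I would record the inclusion that every pre-Wajsberg algebra over $X$ is an implicative-orthomodular algebra. This is exactly the content of Theorem \ref{gqw-140}: starting from $(QW_1)$ and $(Pimpl)$, the argument there derives $(IOM^{''})$, i.e. $x\Cup (x\ra y)^*=x$, and by Theorem \ref{gqw-60} (equivalently Remark \ref{gqw-60-10}) this means $X$ is an implicative-orthomodular algebra. So nothing new is needed in this direction.

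For the reverse inclusion, that every implicative-orthomodular algebra over $X$ is a pre-Wajsberg algebra, I would simply invoke Corollary \ref{gqw-130-10}, which states precisely this. If one prefers to unwind it, the chain is: Theorem \ref{gqw-120} shows that an implicative-orthomodular algebra over an involutive implicative BE algebra satisfies $(QW_1)$ and hence is a quantum-Wajsberg algebra, and then Remarks \ref{gqw-10-10}$(2)$ shows every quantum-Wajsberg algebra is a pre-Wajsberg algebra. Putting the two inclusions together yields that the pre-Wajsberg algebras and the implicative-orthomodular algebras coincide over the class of involutive implicative BE algebras, which is the claim.

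Since both directions reduce to theorems already in place, there is no genuine obstacle; the only matter requiring care is bookkeeping — checking that the implicativity hypothesis is genuinely used (it enters through $(Pimpl)$ in Theorems \ref{gqw-120} and \ref{gqw-140}, whereas the trivial inclusion quantum-Wajsberg $\subseteq$ pre-Wajsberg of Remarks \ref{gqw-10-10}$(2)$ needs no extra assumption), and that the references are chained consistently with Corollaries \ref{gqw-130} and \ref{gqw-130-10}.
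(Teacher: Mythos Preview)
Your proposal is correct and follows essentially the same approach as the paper: the paper's proof consists precisely of invoking Corollary \ref{gqw-130-10} for one inclusion and Theorem \ref{gqw-140} for the other, which is exactly what you do (with some additional unwinding of the references).
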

\begin{proof} It follows by Corollary \ref{gqw-130-10} and Theorem \ref{gqw-140}. 
\end{proof}

\begin{remark} \label{gqw-160-10}
A. Iorgulescu introduced in \cite[Def. 3.2]{Ior32} the notion of \emph{(left-)orthomodular lattices} as involutive 
m-BE algebras satisfying conditions: \\
$(Pom)$ $(x\odot y)\oplus ((x\odot y)^*\odot x)=x$, or, equivalently, $x\Cup (x\odot y)=x;$ \\
$(m$-$Pimpl)$ $((x\odot y^*)^*\odot x^*)^*=x$. \\
We proved that condition $(Pom)$ is equivalent to condition $(QW_2)$, and similarly, we can show that condition $(m$-$Pimpl)$ is equivalent to condition $(Pimpl)$. 
We define the notion of \emph{implicative-orthomodular lattices} as involutive BE algebras 
satisfying conditions $(QW_2)$ and $(Pimpl)$, or, equivalently, implicative-orthomodular algebras satisfying 
condition $(Pimpl)$. 
As a consequence, the statements of Theorems \ref{gqw-120}, \ref{gqw-140} and Corollaries \ref{gqw-130}, 
\ref{gqw-130-10}, \ref{gqw-160} can be reformulated using the notion of implicative-orthomodular lattices. 
\end{remark}

Based on \cite[Examples 6.1, 6.2, 6.3, 6.4]{Ior34}, we give examples of quantum-Wajsberg, pre-Wajsberg, implicative-orthomodular and meta-Wajsberg algebras.  

\begin{example} \label{gqw-170} Let $X=\{0,a,b,c,d,1\}$ and let $(X,\ra,^*,1)$ be the involutive BE algebra 
with $\ra$ and the corresponding operation $\Cap$ given in the following tables:  
\[
\begin{array}{c|ccccccc}
\ra & 0 & a & b & c & d & 1 \\ \hline
0   & 1 & 1 & 1 & 1 & 1 & 1 \\ 
a   & d & 1 & c & 1 & 1 & 1 \\ 
b   & c & 1 & 1 & 1 & 1 & 1 \\ 
c   & b & 1 & a & 1 & c & 1 \\
d   & a & 1 & 1 & 1 & 1 & 1 \\
1   & 0 & a & b & c & d & 1
\end{array}
\hspace{10mm}
\begin{array}{c|ccccccc}
\Cap & 0 & a & b & c & d & 1 \\ \hline
0    & 0 & 0 & 0 & 0 & 0 & 0 \\ 
a    & 0 & a & b & c & d & a \\ 
b    & 0 & b & b & b & d & b \\ 
c    & 0 & a & b & c & d & c \\
d    & 0 & a & b & d & d & d \\
1    & 0 & a & b & c & d & 1
\end{array}
.
\]

We can see that $(X,\ra,^*,1)$ is a quantum-Wajsberg algebra. 
\end{example}

\begin{example} \label{gqw-180} Let $X=\{0,a,b,c,d,1\}$ and let $(X,\ra,^*,1)$ be the involutive BE algebra with $\ra$ and the corresponding operation $\Cap$ given in the following tables:  
\[
\begin{array}{c|ccccccc}
\ra & 0 & a & b & c & d & 1 \\ \hline
0   & 1 & 1 & 1 & 1 & 1 & 1 \\ 
a   & d & 1 & c & 1 & 1 & 1 \\ 
b   & c & 1 & 1 & 1 & 1 & 1 \\ 
c   & b & 1 & c & 1 & c & 1 \\
d   & a & 1 & 1 & 1 & 1 & 1 \\
1   & 0 & a & b & c & d & 1
\end{array}
\hspace{10mm}
\begin{array}{c|ccccccc}
\Cap & 0 & a & b & c & d & 1 \\ \hline
0    & 0 & 0 & 0 & 0 & 0 & 0 \\ 
a    & 0 & a & b & c & d & a \\ 
b    & 0 & b & b & b & d & b \\ 
c    & 0 & a & b & c & d & c \\
d    & 0 & a & b & b & d & d \\
1    & 0 & a & b & c & d & 1
\end{array}
.
\]

One can check that $(X,\ra,^*,1)$ is a pre-Wajsberg algebra. 
Since $a\ra (0\Cap(b\Cap a))=d\neq b=(a\ra 0)\Cap (a\ra b)$, condition $(QW_2)$ is not satisfied, hence 
$X$ is not an implicative-orthomodular algebra.   
\end{example}

\begin{example} \label{gqw-190} Let $X=\{0,a,b,c,d,1\}$ and let $(X,\ra,^*,1)$ be the involutive BE algebra with $\ra$ and the corresponding operation $\Cap$ given in the following tables:  
\[
\begin{array}{c|ccccccc}
\ra & 0 & a & b & c & d & 1 \\ \hline
0   & 1 & 1 & 1 & 1 & 1 & 1 \\ 
a   & d & 1 & 1 & 1 & d & 1 \\ 
b   & c & 1 & 1 & 1 & 1 & 1 \\ 
c   & b & 1 & 1 & 1 & 1 & 1 \\
d   & a & 1 & 1 & 1 & 1 & 1 \\
1   & 0 & a & b & c & d & 1
\end{array}
\hspace{10mm}
\begin{array}{c|ccccccc}
\Cap & 0 & a & b & c & d & 1 \\ \hline
0    & 0 & 0 & 0 & 0 & 0 & 0 \\ 
a    & 0 & a & b & c & d & a \\ 
b    & 0 & a & b & c & d & b \\ 
c    & 0 & a & b & c & d & c \\
d    & 0 & 0 & b & c & d & d \\
1    & 0 & a & b & c & d & 1
\end{array}
.
\]

Then $(X,\ra,^*,1)$ is an implicative-orthomodular algebra. 
Since $d\ra (d\Cap a)=a\neq 1=d\ra a$, condition $(QW_1)$ is not satisfied, so that $X$ is not a QW or 
a pre-W algebra.  
Moreover, $(a\Cap d)\ra (d\Cap a)=a\neq 1$, that is condition $(QW_3)$ is not verified, thus $X$ is not a 
meta-W algebra. 
\end{example}

\begin{example} \label{gqw-200} Let $X=\{0,a,b,c,d,1\}$ and let $(X,\ra,^*,1)$ be the involutive BE algebra with $\ra$ and the corresponding operation $\Cap$ given in the following tables:  
\[
\begin{array}{c|ccccccc}
\ra & 0 & a & b & c & d & 1 \\ \hline
0   & 1 & 1 & 1 & 1 & 1 & 1 \\ 
a   & d & 1 & 1 & 1 & d & 1 \\ 
b   & c & 1 & 1 & 1 & 1 & 1 \\ 
c   & b & 1 & c & 1 & 1 & 1 \\
d   & a & c & 1 & 1 & 1 & 1 \\
1   & 0 & a & b & c & d & 1
\end{array}
\hspace{10mm}
\begin{array}{c|ccccccc}
\Cap & 0 & a & b & c & d & 1 \\ \hline
0    & 0 & 0 & 0 & 0 & 0 & 0 \\ 
a    & 0 & a & b & c & 0 & a \\ 
b    & 0 & a & b & b & d & b \\ 
c    & 0 & a & b & c & d & c \\
d    & 0 & 0 & b & c & d & d \\
1    & 0 & a & b & c & d & 1
\end{array}
.
\]

One can show that $(X,\ra,^*,1)$ is a meta-Wajsberg algebra. 
We have $d\ra (d\Cap a)=a\neq c=d\ra a$, hence condition $(QW_1)$ is not satisfied, that is $X$ is not a QW or a 
pre-W algebra. 
Moreover, $d\ra (0\Cap (a\Cap d))=a\neq c=(d\ra 0)\Cap (d\ra a)$, so that condition $(QW_2)$ is not verified, thus  
$X$ is not an implicative-orthomodular algebra. 
\end{example}

$\vspace*{1mm}$

\section{Commutative generalizations of quantum-Wajsberg algebras} 

In this section, we define and study the commutativity property for the generalizations of quantum-Wajsberg algebras. 
We show that the Wajsberg algebras are implicative-orthomodular, pre-Wajsberg and meta-Wajsberg algebras, and 
that the commutative quantum-Wajsberg (implicative-orthomodular, pre-Wajsberg, meta-Wajsberg) algebras are Wajsberg algebras. 
We give conditions for the implicative-orthomodular algebras to be Wajsberg algebras, and we show that 
the implicative-orthomodular, the pre-MV and the meta-MV algebras are Wajsberg algebras if and only if 
the relation $\le$ is antisymmetric. \\
A BE algebra $X$ is called \emph{commutative} if $(x\ra y)\ra y=(y\ra x)\ra x$, for all $x,y\in X$. 
Obviously, any BCK algebra is a BE algebra, but the exact connection between BE algebras and 
BCK algebras is made in the papers \cite{Ior16, Ior17}: a BCK algebra is a BE algebra satisfying $(BCK_4)$ (antisymmetry) and $(BCK_1)$. \\
Since: \\
- commutative BE algebras are commutative BCK algebras (\cite{Walend1}]),  \\
- bounded commutative BCK are term-equivalent to MV algebras (\cite{Mund1}) and \\
- Wajsberg algebras are term-equivalent to MV algebras (\cite{Font1}), \\
it follows that bounded commutative BE algebras are bounded commutative BCK algebras. 
Hence the bounded commutative BE algebras are term-equivalent with MV algebras, thus with 
Wajsberg algebras. As a consequence, the commutative quantum-Wajsberg (implicative-orthomodular algebras, 
pre-Wajsberg algebras, meta-Wajsberg) algebras are Wajsberg algebras. 

\begin{lemma} \label{cgqw-10} If $(X,\ra,^*,1)$ is a Wajsberg algebra, then $\le_Q=\le$. 
\end{lemma}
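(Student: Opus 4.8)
The plan is to establish $\le_Q\,=\,\le$ by proving the two inclusions separately. One of them, $\le_Q\,\subseteq\,\le$, comes for free: it holds in every involutive BE algebra by Proposition \ref{qbe-20}$(4)$, so no Wajsberg-specific input is needed there. (Implicitly we use that a Wajsberg algebra is an involutive BE algebra, so that $\Cap$, $\Cup$ and $\le_Q$ are indeed defined.)

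For the reverse inclusion, suppose $x\le y$, i.e.\ $x\ra y=1$. First I would pass to complements: by Lemma \ref{qbe-10}$(3)$ together with involutivity one has $x\ra y=y^*\ra x^*$, hence $y^*\ra x^*=1$. Next I would rewrite $x\Cap y$ so that this hypothesis becomes usable. By Proposition \ref{qbe-20}$(3)$, $x\Cap y=(x^*\Cup y^*)^*$, and by the definition of $\Cup$, $x^*\Cup y^*=(x^*\ra y^*)\ra y^*$. The decisive step is to invoke the commutativity axiom $(W_3)$, namely $(a\ra b)\ra b=(b\ra a)\ra a$, with $a:=x^*$ and $b:=y^*$: this turns $(x^*\ra y^*)\ra y^*$ into $(y^*\ra x^*)\ra x^*$, which collapses to $1\ra x^*=x^*$ by $(W_1)$. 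Therefore $x\Cap y=(x^*)^*=x$ by involutivity, i.e.\ $x\le_Q y$. Combining the two inclusions yields $\le_Q\,=\,\le$.

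I do not expect a genuine obstacle; the only real concern is bookkeeping of what is available at this point — that Lemma \ref{qbe-10}$(3)$, Proposition \ref{qbe-20}$(3)$,$(4)$ and the Wajsberg axioms $(W_1)$, $(W_3)$ are exactly the ingredients used, and nothing circular. If one prefers to stay inside the $\Cap$-expression, an alternative is to observe that $x\le_Q y$ is equivalent to $x^*=(x^*\ra y^*)\ra y^*$ and then apply $(W_3)$ followed by $(W_1)$ to the right-hand side as above; this bypasses Proposition \ref{qbe-20}$(3)$ but is essentially the same short computation.
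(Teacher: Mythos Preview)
Your proof is correct and follows essentially the same route as the paper: one inclusion via Proposition~\ref{qbe-20}$(4)$, and for the other, unfold $x\Cap y$, apply the commutativity axiom $(W_3)$ to swap the roles of $x^*$ and $y^*$, and then collapse using $x\ra y=1$ (equivalently $y^*\ra x^*=1$) together with $(W_1)$ and involutivity. The only cosmetic difference is that you pass through $x\Cap y=(x^*\Cup y^*)^*$ via Proposition~\ref{qbe-20}$(3)$, whereas the paper expands $x\Cap y$ directly from its definition; the computations are otherwise identical.
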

\begin{proof} 
Let $x,y\in X$. If $x\le_Q y$, by Proposition \ref{qbe-20}$(4)$ we have $x\le y$. 
Conversely, if $x\le y$, then $x\ra y=1$, and we get: \\
$\hspace*{2.00cm}$ $x\Cap y=((x^*\ra y^*)\ra y^*)^*=((y^*\ra x^*)\ra x^*)^*=(x\ra (y^*\ra x^*)^*)^*$ \\
$\hspace*{3.00cm}$ $=(x\ra (x\ra y)^*)^*=(x\ra 1^*)^*=x^{**}=x$. \\
It follows that $x\le_Q y$, hence $\le_Q=\le$. 
\end{proof} 

\begin{proposition} \label{cgqw-20} $\rm($\cite{Ciu78}$\rm)$ Let $(X,\ra,0,1)$ be a Wajsberg algebra. 
The following hold for all $x,y,z\in X$: \\
$(1)$ $x\ra (x\Cap y)=x\ra y$ and $x\ra ((x\Cap y)\Cap x)=x\ra y;$ \\
$(2)$ $x\Cap (x^*\ra y)=x;$ \\
$(3)$ $(x\ra y)\ra (x\Cap y)=x;$ \\
$(4)$ $(z\Cap x)\ra (y\Cap x)=(z\Cap x)\ra y;$ \\
$(5)$ $(x\ra y)^*\Cap x=(x\ra y)^*;$ \\
$(6)$ $(x\Cap y)\Cap y=x\Cap y;$ \\
$(7)$ $x\Cap y\le_Q y \le_Q x\Cup y;$ \\
$(8)$ $(x\Cap y)\Cap (y\Cap z)=(x\Cap y)\Cap z;$ \\
$(9)$ $(x\Cap y)\Cap z=y\Cap (x\Cap z);$ \\
$(10)$ $x\ra (y\Cap z)=(x\ra y)\Cap (x\ra z)$.   
\end{proposition}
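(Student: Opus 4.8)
The plan is to derive all ten identities from a few structural facts about the Wajsberg algebra $X$: that $(QW_1)$, $(QW_2)$ and $(IOM)$ hold on $X$, that $\Cap$ is the meet of the poset $(X,\le)$, and that $\le_Q$ coincides with $\le$. First I would collect the preliminaries. Since $X$ is in particular a quantum-Wajsberg algebra (proved in \cite{Ciu78}) and every quantum-Wajsberg algebra is implicative-orthomodular by Remarks \ref{gqw-10-10}(2), the algebra $X$ satisfies $(QW_1)$, $(QW_2)$ and $(IOM)$, and Propositions \ref{gqw-50} and \ref{gqw-50-10} apply to $X$. Moreover, axiom $(W_3)$ states exactly that $x\Cup y=y\Cup x$, so $\Cup$ is commutative, whence $\Cap$ is commutative by the De Morgan laws of Proposition \ref{qbe-20}(3). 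Finally, $\le_Q$ is a partial order by Propositions \ref{qbe-20}(2) and \ref{gqw-50-10}(7), and $\le_Q=\le$ by Lemma \ref{cgqw-10}.

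Next I would show that $x\Cap y$ is the infimum of $\{x,y\}$ in $(X,\le)$. By Proposition \ref{qbe-20}(9) we have $x\Cap y\le x$ and $x\Cap y\le y$. If $w\le x$ and $w\le y$, then $w=w\Cap x=w\Cap y$ by the definition of $\le_Q$; applying Proposition \ref{gqw-50-10}(5) to the triple $(w,x,y)$ gives $(w\Cap x)\Cap(x\Cap y)=(w\Cap x)\Cap y$, that is $w\Cap(x\Cap y)=w\Cap y=w$, so $w\le_Q x\Cap y$, i.e. $w\le x\Cap y$. Hence $\Cap$ is the (unique, by antisymmetry) meet of $(X,\le)$; in particular it is associative, and together with its commutativity this yields $(9)$ at once, since $(x\Cap y)\Cap z=x\Cap y\Cap z=y\Cap(x\Cap z)$.

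The remaining items are then essentially bookkeeping. Identity $(2)$ is precisely $(IOM)$. The first equality in $(1)$ is $(QW_1)$, and the second follows from it because $(x\Cap y)\Cap x=x\Cap(x\Cap y)=x\Cap y$ by commutativity and Proposition \ref{qbe-20}(10). Identities $(4)$, $(5)$, $(6)$, $(7)$, $(8)$ are verbatim instances of Proposition \ref{gqw-50-10}(10), Proposition \ref{gqw-50}(5), Proposition \ref{gqw-50-10}(1), Proposition \ref{gqw-50}(8), and Proposition \ref{gqw-50-10}(5), respectively, and $(3)$ is Proposition \ref{gqw-50}(2) after replacing $y\Cap x$ by $x\Cap y$. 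For $(10)$ I would start from $(QW_2)$: since $\Cap$ is the meet we have $y\Cap(z\Cap x)=x\Cap(y\Cap z)$, hence $x\ra(y\Cap(z\Cap x))=x\ra(x\Cap(y\Cap z))=x\ra(y\Cap z)$ by $(QW_1)$, so the left-hand side of $(QW_2)$ equals $x\ra(y\Cap z)$ while the right-hand side is $(x\ra y)\Cap(x\ra z)$ by definition. The only step using more than formal manipulation is the identification of $\Cap$ with the infimum (and hence its associativity); I expect this to be the main point, and if one prefers, the associativity of $\Cap$ — equivalently, that $(X,\Cap,\Cup)$ is the underlying lattice — can instead be quoted from the term-equivalence of Wajsberg algebras with MV algebras (\cite{Font1}).
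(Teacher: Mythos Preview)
Your argument is correct. The paper does not give its own proof of Proposition~\ref{cgqw-20}; it simply records the statement with a citation to \cite{Ciu78}. Your route---importing the IOM-algebra machinery of Propositions~\ref{gqw-50} and~\ref{gqw-50-10} via the inclusion Wajsberg $\subseteq$ QW, showing that $\Cap$ is the lattice meet of $(X,\le)$ (using $\le=\le_Q$ from Lemma~\ref{cgqw-10}), and then reading off the ten items---is sound and each step checks.

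One point worth flagging explicitly: within the present paper the inclusion Wajsberg $\subseteq$ QW appears only as Proposition~\ref{cgqw-30}, whose proof \emph{uses} Proposition~\ref{cgqw-20}. So your appeal to \cite{Ciu78} for this inclusion (or, alternatively, the MV-algebra route you mention at the end) is not cosmetic but essential to avoid circularity in the logical order of this paper. You have handled this correctly by citing the external source rather than Proposition~\ref{cgqw-30}.
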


\begin{proposition} \label{cgqw-30} Wajsberg algebras are implicative-orthomodular algebras, pre-Wajsberg algebras 
and meta-Wajsberg algebras.  
\end{proposition}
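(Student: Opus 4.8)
The plan is to verify the three defining identities $(QW_1)$, $(QW_2)$ and $(QW_3)$ for a Wajsberg algebra $(X,\ra,^*,1)$, regarded as an involutive BE algebra, using the properties of the operation $\Cap$ in Wajsberg algebras collected in Proposition \ref{cgqw-20} together with Proposition \ref{gqw-20}.

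First I would record that a Wajsberg algebra is in particular a commutative BE algebra, since axiom $(W_3)$ is exactly the commutativity identity $(x\ra y)\ra y=(y\ra x)\ra x$; consequently the operation $\Cap$ is commutative, because $x\Cap y=((x^*\ra y^*)\ra y^*)^*=((y^*\ra x^*)\ra x^*)^*=y\Cap x$ by $(W_3)$ applied to $x^*$ and $y^*$. This commutativity is the only point where the argument genuinely uses that $X$ is a Wajsberg algebra rather than an arbitrary involutive BE algebra, so I regard it as the (mild) crux.

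For the pre-Wajsberg condition $(QW_1)$, namely $x\ra(x\Cap y)=x\ra y$, nothing is needed beyond the first identity of Proposition \ref{cgqw-20}$(1)$; hence $X$ is a pre-Wajsberg algebra, and then by Proposition \ref{gqw-20} it is a meta-Wajsberg algebra as well. For the implicative-orthomodular condition $(QW_2)$, namely $x\ra(y\Cap(z\Cap x))=(x\ra y)\Cap(x\ra z)$, I would apply the distributivity law of Proposition \ref{cgqw-20}$(10)$ with $z$ replaced by $z\Cap x$, obtaining $x\ra(y\Cap(z\Cap x))=(x\ra y)\Cap(x\ra(z\Cap x))$, and then reduce the second factor using commutativity of $\Cap$ together with the first identity of Proposition \ref{cgqw-20}$(1)$: $x\ra(z\Cap x)=x\ra(x\Cap z)=x\ra z$. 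This gives $(QW_2)$, so $X$ is an implicative-orthomodular algebra, and the proof is complete.

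No serious obstacle is expected, since every ingredient is already available. Alternatively, one could bypass the computation of $(QW_2)$ altogether by recalling from \cite{Ciu78} that every Wajsberg algebra is a quantum-Wajsberg algebra, hence satisfies both $(QW_1)$ and $(QW_2)$ by the equivalence of $(QW)$ with their conjunction, and then invoking Proposition \ref{gqw-20}; but the short self-contained verification above seems preferable.
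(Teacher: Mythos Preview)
Your proof is correct and follows essentially the same route as the paper: both arguments use commutativity of $\Cap$ (from $(W_3)$) together with Proposition~\ref{cgqw-20}$(1)$ and $(10)$ to show that a Wajsberg algebra is a quantum-Wajsberg algebra, and then deduce the remaining properties via Remarks~\ref{gqw-10-10} and Proposition~\ref{gqw-20}. The only cosmetic difference is that the paper verifies the single axiom $(QW)$ directly (using also Proposition~\ref{cgqw-20}$(8)$,$(9)$ for the associativity rearrangements), whereas you verify the equivalent pair $(QW_1)$, $(QW_2)$ separately, which lets you avoid $(8)$ and $(9)$.
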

\begin{proof} 
Let $(X,\ra,^*,1)$ be a Wajsberg algebra and let $x,y,z\in X$. 
By commutativity and applying Proposition \ref{cgqw-20}$(8)$,$(9)$,$(1)$,$(10)$, we get: \\
$\hspace*{2.10cm}$ $x\ra ((x\Cap y)\Cap (z\Cap x))=x\ra ((y\Cap x)\Cap (x\Cap z))=x\ra ((y\Cap x)\Cap z)$ \\
$\hspace*{6.00cm}$ $=x\ra (x\Cap (y\Cap z))=x\ra (y\Cap z)$ \\
$\hspace*{6.00cm}$ $=(x\ra y)\Cap (x\ra z)$. \\
Hence $X$ satisfies axiom $(QW)$, that is it is a quantum-Wajsberg algebra. 
By Remarks \ref{gqw-10-10} and Proposition \ref{gqw-20}, it follows that $X$ is an implicative-orthomodular algebra, 
as well as a pre-Wajsberg algebra and a meta-Wajsberg algebra.  
\end{proof}

\begin{theorem} \label{cgqw-40} A pre-Wajsberg algebra $(X,\ra,^*,1)$ is a Wajsberg algebra if and only if, 
for all $x,y\in X$, $x\le y$ implies $x\le_Q y$. 
\end{theorem}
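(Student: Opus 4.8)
The plan is to handle the two implications separately: the forward direction is immediate, and the converse reduces to establishing commutativity of $\Cup$ and then invoking the known term-equivalences.

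For the forward direction, suppose $X$ is a Wajsberg algebra. Then Lemma~\ref{cgqw-10} gives $\le_Q\,=\,\le$, so in particular $x\le y$ implies $x\le_Q y$ for all $x,y\in X$; nothing else is needed.

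For the converse, assume $X$ is a pre-Wajsberg algebra in which $x\le y$ implies $x\le_Q y$. First I would invoke Proposition~\ref{gqw-20}, so that $X$ is a meta-Wajsberg algebra and hence, by Remarks~\ref{gqw-10-10}$(1)$, satisfies $(QW_3^{'})$, namely $(x\Cup y)\ra(y\Cup x)=1$ for all $x,y$; equivalently $x\Cup y\le y\Cup x$, and interchanging $x$ and $y$ gives $y\Cup x\le x\Cup y$ as well. Applying the hypothesis to these two inequalities yields $x\Cup y\le_Q y\Cup x$ and $y\Cup x\le_Q x\Cup y$, and then the antisymmetry of $\le_Q$ (Proposition~\ref{qbe-20}$(2)$) forces $x\Cup y=y\Cup x$, i.e. $(x\ra y)\ra y=(y\ra x)\ra x$. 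Thus $X$ is a commutative BE algebra, and being involutive it is bounded, so $X$ is a bounded commutative BE algebra. By the term-equivalences recalled just before Lemma~\ref{cgqw-10} (bounded commutative BE algebras are bounded commutative BCK algebras, which are term-equivalent to MV algebras, which are term-equivalent to Wajsberg algebras), $(X,\ra,^*,1)$ is a Wajsberg algebra, the negation matching because $x^*=x\ra 0$ holds in both presentations.

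I do not expect a genuine obstacle: the converse is a short chain of facts already available in the excerpt, the only slightly delicate point being the final identification of the Wajsberg signature with that of the bounded commutative BE algebra. Conceptually, the crux is that $(QW_3^{'})$ supplies the two opposite $\le$-inequalities between $x\Cup y$ and $y\Cup x$, which the hypothesis upgrades to $\le_Q$ so that antisymmetry of $\le_Q$ collapses them into commutativity of $\Cup$.
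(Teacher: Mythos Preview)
Your proof is correct and follows essentially the same approach as the paper. The only cosmetic differences are: for the forward direction you cite Lemma~\ref{cgqw-10} while the paper does the short computation explicitly; for the converse you work with $\Cup$ and invoke Proposition~\ref{gqw-20} together with Remarks~\ref{gqw-10-10}(1) to obtain $(QW_3^{'})$, whereas the paper works with $\Cap$ and recomputes the analogous identity $(x\Cap y)\ra(y\Cap x)=1$ directly from $(QW_1)$---but this is precisely the calculation already packaged in Proposition~\ref{gqw-20}, so the substance is identical.
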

\begin{proof} 
Let $X$ be a Wajsberg algebra, so that, by Proposition \ref{cgqw-30}, it follows that $X$ is a pre-Wajsberg algebra. 
Consider $x,y\in X$ such that $x\le y$, that is $x\ra y=1$. 
By commutativity, we have: 
$x=(x^*)^*=(1\ra x^*)^*=((x\ra y)\ra x^*)^*=((y^*\ra x^*)\ra x^*)^*=y\Cap x=x\Cap y$. 
Hence $x\le_Q y$. 
Conversely, suppose that $X$ is a pre-Wajsberg algebra such that $x\le y$ implies $x\le_Q y$ for all $x,y\in X$. 
Using $(QW_1)$, we get: \\
$\hspace*{2.00cm}$ $(x\Cap y)\ra (y\Cap x)=(y\Cap x)^*\ra (x\Cap y)^*=(y\Cap x)^*\ra ((x^*\ra y^*)\ra y^*)$ \\
$\hspace*{5.00cm}$ $=(x^*\ra y^*)\ra ((y\Cap x)^*\ra y^*)=(y\ra x)\ra (y\ra (y\Cap x))$ \\
$\hspace*{5.00cm}$ $=(y\ra x)\ra (y\ra x)=1$. \\
It follows that $x\Cap y\le y\Cap x$, hence $x\Cap y\le_Q y\Cap x$, and similarly $y\Cap x\le_Q x\Cap y$. 
Since $\le_Q$ is antisymmetric (Proposition \ref{qbe-20}$(2)$), we get $x\Cap y=y\Cap x$, that is $X$ is a 
Wajsberg algebra. 
\end{proof} 

\begin{corollary} \label{cgqw-40-10} A pre-Wajsberg algebra is a Wajsberg algebra if and only if the relation 
$\le$ is antisymmetric. 
\end{corollary}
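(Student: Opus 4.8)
The plan is to piggyback entirely on Theorem \ref{cgqw-40}, which already characterizes when a pre-Wajsberg algebra is a Wajsberg algebra: namely, iff $x\le y$ implies $x\le_Q y$ for all $x,y\in X$. So it suffices to show that, in a pre-Wajsberg algebra $X$, the relation $\le$ is antisymmetric if and only if this implication holds.

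For the "only if" part, I would assume $X$ is a Wajsberg algebra. By Lemma \ref{cgqw-10} we have $\le_Q\,=\,\le$ on $X$, and by Proposition \ref{qbe-20}$(2)$ the relation $\le_Q$ is antisymmetric; therefore $\le$ is antisymmetric as well. (Alternatively, one could invoke that Wajsberg algebras are dual BCK algebras, hence satisfy $(BCK_4)$, but the route through Lemma \ref{cgqw-10} is shorter and uses only material already in the section.)

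For the "if" part, assume $\le$ is antisymmetric and let $x,y\in X$ with $x\le y$, i.e. $x\ra y=1$. Applying axiom $(QW_1)$ gives $x\ra(x\Cap y)=x\ra y=1$, so $x\le x\Cap y$. On the other hand, Proposition \ref{qbe-20}$(9)$ yields $x\Cap y\le x$. By antisymmetry of $\le$ we conclude $x=x\Cap y$, that is $x\le_Q y$. Hence $x\le y$ implies $x\le_Q y$ for all $x,y$, and Theorem \ref{cgqw-40} tells us that $X$ is a Wajsberg algebra.

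I do not expect any genuine obstacle: the argument is a two-line deduction in each direction once Theorem \ref{cgqw-40} is in place. The only step worth isolating is the remark that $(QW_1)$ collapses $x\ra(x\Cap y)$ to $x\ra y$, which is exactly what converts the hypothesis $x\le y$ into the inequality $x\le x\Cap y$; everything else is a direct citation of Lemma \ref{cgqw-10}, Proposition \ref{qbe-20}, and Theorem \ref{cgqw-40}.
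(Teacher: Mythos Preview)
Your proposal is correct and is exactly the argument the paper intends: the corollary is stated without proof immediately after Theorem~\ref{cgqw-40}, and your two-line deduction in each direction (using $(QW_1)$ together with Proposition~\ref{qbe-20}$(9)$ and antisymmetry for one implication, and Lemma~\ref{cgqw-10} with Proposition~\ref{qbe-20}$(2)$ for the other) is precisely how one fills in the gap from that theorem.
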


\begin{corollary} \label{cgqw-40-20} A quantum-Wajsberg algebra is a Wajsberg algebra if and only if the relation 
$\le$ is antisymmetric. 
\end{corollary}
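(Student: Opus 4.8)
The plan is to obtain this as an immediate consequence of Corollary \ref{cgqw-40-10}, using the observation in Remarks \ref{gqw-10-10}$(2)$ that every quantum-Wajsberg algebra is in particular a pre-Wajsberg algebra. In other words, the hypothesis ``$X$ is a quantum-Wajsberg algebra'' will be used only through the inclusion of quantum-Wajsberg algebras into pre-Wajsberg algebras, after which the statement is inherited verbatim from the pre-Wajsberg case already established in Theorem \ref{cgqw-40} and Corollary \ref{cgqw-40-10}.

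For the forward implication I would assume that the quantum-Wajsberg algebra $X$ is a Wajsberg algebra and simply recall that Wajsberg algebras satisfy $(BCK_4)$: the axioms $(W_1)$--$(W_4)$ make $\le$ a partial order, so in particular $\le$ is antisymmetric. For the converse, suppose $\le$ is antisymmetric on the quantum-Wajsberg algebra $X$; by Remarks \ref{gqw-10-10}$(2)$, $X$ is a pre-Wajsberg algebra, hence Corollary \ref{cgqw-40-10} applies and yields that $X$ is a Wajsberg algebra. This closes both directions.

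I do not expect any genuine obstacle here; the only point deserving a word of care is making explicit that the quantum-Wajsberg hypothesis enters solely via $\mathrm{QW}\subseteq\mathrm{pre}\text{-}\mathrm{W}$, so that no separate analysis of $(QW_2)$ (or of the axiom $(QW)$) is needed and the corollary is purely a specialization of the pre-Wajsberg result. Alternatively, one could reprove it from scratch by repeating the computation in the proof of Theorem \ref{cgqw-40} inside $X$ — using $(QW_1)$, Lemma \ref{qbe-10}$(6)$ and antisymmetry of $\le_Q$ (Proposition \ref{qbe-20}$(2)$) to force $x\Cap y=y\Cap x$ — but the reduction to Corollary \ref{cgqw-40-10} is shorter and is the approach I would present.
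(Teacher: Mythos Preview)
Your proposal is correct and matches the paper's intended approach: the corollary is stated without proof immediately after Corollary~\ref{cgqw-40-10}, so the paper clearly means it as a direct specialization via the inclusion $\mathbb{QW}\subseteq\mathbb{PW}$ from Remarks~\ref{gqw-10-10}$(2)$, exactly as you outline.
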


\begin{theorem} \label{cgqw-50} An implicative-orthomodular algebra $(X,\ra,^*,1)$ is a Wajsberg algebra 
if and only if it satisfies the following conditions, for all $x,y\in X$: \\
$(1)$ $x\le_Q x\Cup y;$ \\
$(2)$ $(x\Cup y)\ra x=y\ra x$.   
\end{theorem}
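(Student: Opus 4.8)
The plan is to establish both implications by leveraging the characterization of Wajsberg algebras as bounded commutative BE algebras (equivalently, by Corollary \ref{cgqw-40-10} applied to the pre-Wajsberg case, via antisymmetry of $\le$). For the forward direction, assume $(X,\ra,^*,1)$ is a Wajsberg algebra. Then by Proposition \ref{cgqw-30} it is in particular an implicative-orthomodular algebra, so the hypothesis "IOM" is consistent. By Lemma \ref{cgqw-10} we have $\le_Q\,=\,\le$, so condition $(1)$ follows immediately from Proposition \ref{qbe-20}$(9)$ (which gives $x\le x\Cup y$) together with $\le_Q\,=\,\le$. For condition $(2)$, I would compute $(x\Cup y)\ra x$ directly: by definition $x\Cup y=(x\ra y)\ra y$, and using commutativity together with Lemma \ref{qbe-10}$(6)$ (or $(5)$) one rewrites $((x\ra y)\ra y)\ra x$. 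The cleanest route is: $(x\Cup y)\ra x = x^*\ra(x\Cup y)^* = x^*\ra(x^*\Cap y^*)$, and then expand $x^*\Cap y^*=((x\ra y)\ra y^*)^*$ (using Lemma \ref{qbe-10}$(6)$ to identify $(x^*)^*\ra(y^*)^*$), so that $(x\Cup y)\ra x = x^*\ra((x\ra y)\ra y^*) = (x\ra y)\ra(x^*\ra y^*) = (x\ra y)\ra(y\ra x)$. Finally, in a Wajsberg algebra $(x\ra y)\ra(y\ra x)=y\ra x$ — this is the key identity, provable from $(W_3)$, namely $(y\ra x)\ra x = (x\ra y)\ra y$ applied suitably, or directly since $y\le(x\ra y)\ra y=x\Cup y$ implies $(x\Cup y)\ra x\le y\ra x$, while $y\ra x\le(x\Cup y)\ra x$ is always true because $y\le x\Cup y$; combining with antisymmetry of $\le$ in the Wajsberg algebra gives equality. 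This last remark is in fact the slick way to get $(2)$ without any computation.

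For the converse, assume $X$ is an implicative-orthomodular algebra satisfying $(1)$ and $(2)$. By Corollary \ref{cgqw-40-10} (and Theorem \ref{gqw-140}/Corollary \ref{gqw-160}, or more directly Corollary \ref{cgqw-40-20}) it suffices to show that $\le$ is antisymmetric, or equivalently — since we want a Wajsberg algebra and pre-Wajsberg suffices given antisymmetry — to show $X$ is a pre-Wajsberg algebra (verifies $(QW_1)$) and then invoke Theorem \ref{cgqw-40}. Actually the most economical path is: show that $(1)$ and $(2)$ together force $X$ to be commutative, i.e. $x\Cup y=y\Cup x$ for all $x,y$. From $(1)$, $x\le_Q x\Cup y$, so by Proposition \ref{qbe-20}$(1)$, $x\Cup y = x\Cup(x\Cup y)$ — not quite what is needed; instead I would argue symmetrically that $y\le_Q x\Cup y$ (this should follow from $(1)$ applied with roles of $x,y$ swapped, giving $y\le_Q y\Cup x$, combined with $(QW_3^{'})$ which holds by Proposition \ref{gqw-20} since IOM implies... wait, IOM does not imply pre-W in general). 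Here lies the subtlety: I must use $(2)$ to pin down $x\Cup y$. From $(2)$, $(x\Cup y)\ra x = y\ra x$, and symmetrically $(y\Cup x)\ra y = x\ra y$; I would combine these with the cancellation law (Proposition \ref{qbe-30}$(1)$) and Proposition \ref{gqw-50}$(2)$, $(x\ra y)\ra(y\Cap x)=x$, to deduce $x\Cup y = y\Cup x$. Once commutativity of $\Cup$ is in hand, $X$ is a commutative BE algebra, hence (being bounded and involutive) term-equivalent to an MV algebra, i.e. a Wajsberg algebra.

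The main obstacle I anticipate is the converse direction: getting from the two somewhat weak-looking hypotheses $(1)$ and $(2)$ to full commutativity. Condition $(1)$ alone only says $x=x\Cap(x\Cup y)$, and an IOM algebra already satisfies $x\Cap(y\Cup x)=x$ by Proposition \ref{gqw-30-10}$(1)$, so $(1)$ is really asserting $x\Cap(x\Cup y)=x$ with the arguments of $\Cup$ in the "wrong" order — it is exactly the statement $x\le_Q x\Cup y$ as opposed to $x\le_Q y\Cup x$. The real work is showing that $(2)$ upgrades this asymmetric information into symmetry. I expect to need Proposition \ref{gqw-50}$(7)$ ($x\le_Q y$ and $y\le x$ imply $x=y$) as the final antisymmetry-type step, applied to $x\Cup y$ and $y\Cup x$: condition $(1)$ feeds the $\le_Q$ comparisons and condition $(2)$ together with $(QW_3^{'})$-style manipulations (Remarks \ref{gqw-10-10}$(1)$) feeds the $\le$ comparisons in the reverse direction. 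If a direct argument proves elusive, the fallback is to show $(1)$ and $(2)$ imply $(QW_1)$ directly — then $X$ is pre-Wajsberg, and one checks $x\le y\Rightarrow x\le_Q y$ to invoke Theorem \ref{cgqw-40}; condition $(2)$ with $y:=x^*\ra y$ or similar substitutions, combined with Proposition \ref{gqw-50}$(9)$ ($(x\Cup y)\ra y=x\ra y$), looks promising for extracting $(QW_1)$.
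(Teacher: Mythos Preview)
Your forward direction is correct but more laboured than necessary. The paper simply observes that a Wajsberg algebra is commutative, so $x\Cup y=y\Cup x$; then condition $(1)$ is just Proposition~\ref{gqw-50}$(8)$ (namely $x\le_Q y\Cup x$) and condition $(2)$ is just Proposition~\ref{gqw-50}$(9)$ (namely $(y\Cup x)\ra x=y\ra x$), both read through the equality $x\Cup y=y\Cup x$. Your detour via $\le_Q=\le$ and the computation $(x\Cup y)\ra x=(x\ra y)\ra(y\ra x)$ works in the end but is avoidable.

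For the converse you have correctly identified the target (commutativity of $\Cup$) and the final tool (antisymmetry of $\le_Q$), but you have not found the bridge, and the routes you sketch (cancellation law with Proposition~\ref{gqw-50}$(2)$, or deriving $(QW_1)$ by substitution) do not obviously close. The paper's missing step is a substitution trick: from $(2)$ in the form $(y\Cup x)\ra y=x\ra y$, apply $(\cdot)\ra y$ to both sides to obtain
\[
x\Cup y=(x\ra y)\ra y=((y\Cup x)\ra y)\ra y=(y\Cup x)\Cup y,
\]
and symmetrically $y\Cup x=(x\Cup y)\Cup x$. Now substitute $x\Cup y$ for $x$ and $x$ for $y$ in condition $(1)$ to get $x\Cup y\le_Q (x\Cup y)\Cup x=y\Cup x$; by symmetry also $y\Cup x\le_Q x\Cup y$, and antisymmetry of $\le_Q$ (Proposition~\ref{qbe-20}$(2)$) finishes. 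This is the idea you were circling but did not land on; without it the converse in your proposal remains a plan rather than a proof.
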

\begin{proof} 
If $X$ is a Wajsberg algebra, then by Proposition \ref{cgqw-30}, it is an implicative-orthomodular algebra. 
Since $X$ is commutative, applying Proposition \ref{gqw-50}$(8)$,$(9)$, $X$ satisfies conditions $(1)$ and $(2)$.  
Conversely, suppose $X$ is an implicative-orthomodular algebra satisfying conditions $(1)$ and $(2)$. 
By $(2)$, $x\ra y=(y\Cup x)\ra y$, hence $(x\ra y)\ra y=((y\Cup x)\ra y)\ra y$, that is $x\Cup y=(y\Cup x)\Cup y$. 
Similarly we get $y\Cup x=(x\Cup y)\Cup x$. 
Replacing $x$ by $x\Cup y$ and $y$ by $x$ in $(1)$, we get $x\Cup y\le_Q (x\Cup y)\Cup x=y\Cup x$, 
and similarly $y\Cup x\le_Q (y\Cup x)\Cup y=x\Cup y$. 
Since $\le_Q$ is antisymmetric, we get $x\Cup y=y\Cup x$, that is $X$ is commutative.
Hence $X$ is a Wajsberg algebra. 
\end{proof} 

\begin{proposition} \label{cgqw-60} A meta-Wajsberg algebra is a Wajsberg algebra if and only if the relation 
$\le$ is antisymmetric. 
\end{proposition}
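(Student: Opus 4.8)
The plan is to route everything through commutativity of the underlying BE algebra, exploiting the $\Cap$--$\Cup$ duality. For the forward implication, suppose $(X,\ra,^*,1)$ is a Wajsberg algebra. Then Proposition~\ref{cgqw-30} already gives that $X$ is a meta-Wajsberg algebra, so it only remains to check that $\le$ is antisymmetric. If $x\le y$ and $y\le x$, then $x\ra y=1=y\ra x$, and the Wajsberg axioms yield
\[
x=1\ra x=(y\ra x)\ra x=(x\ra y)\ra y=1\ra y=y
\]
(the middle equality is $(W_3)$, the outer ones are $(W_1)$); equivalently, Wajsberg algebras are term-equivalent to bounded commutative BCK algebras, which satisfy $(BCK_4)$.

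For the converse, suppose $X$ is a meta-Wajsberg algebra in which $\le$ is antisymmetric. Instantiating $(QW_3)$ at the pair $(x,y)$ and at the pair $(y,x)$ gives $(x\Cap y)\ra(y\Cap x)=1$ and $(y\Cap x)\ra(x\Cap y)=1$, i.e. $x\Cap y\le y\Cap x$ and $y\Cap x\le x\Cap y$; antisymmetry then forces $x\Cap y=y\Cap x$ for all $x,y\in X$. Since $X$ is involutive, Proposition~\ref{qbe-20}(3) gives $x\Cup y=(x^*\Cap y^*)^*$, hence
\[
(x\ra y)\ra y=x\Cup y=(x^*\Cap y^*)^*=(y^*\Cap x^*)^*=y\Cup x=(y\ra x)\ra x
\]
for all $x,y\in X$, so $X$ is a commutative BE algebra; being involutive it is bounded, hence a bounded commutative BE algebra. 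By the chain of term-equivalences recalled just before Lemma~\ref{cgqw-10} (bounded commutative BE algebras $\equiv$ bounded commutative BCK algebras $\equiv$ MV algebras $\equiv$ Wajsberg algebras), $X$ is a Wajsberg algebra.

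I do not anticipate a genuine obstacle here: the argument hinges on the single observation that $(QW_3)$ literally asserts $x\Cap y\le y\Cap x$, so under antisymmetry the two sides collapse, and the $\Cap$--$\Cup$ duality of Proposition~\ref{qbe-20}(3) then converts this into the commutativity identity $(x\ra y)\ra y=(y\ra x)\ra x$. The only points requiring care are reading the displayed equalities in the correct direction and invoking the term-equivalence with Wajsberg algebras rather than reproving it; this is also the shortest of the three ``$\le$ antisymmetric'' corollaries, since for meta-Wajsberg algebras the hypothesis $(QW_3)$ is already in inequality form.
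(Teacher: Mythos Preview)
Your proof is correct and follows essentially the same approach as the paper: both directions hinge on showing that antisymmetry of $\le$ together with $(QW_3)$ (or its equivalent form $(QW_3^{'})$) forces $x\Cup y=y\Cup x$, whence commutativity and the term-equivalence with Wajsberg algebras conclude. The only cosmetic difference is that the paper works directly with $(QW_3^{'})$ to get $x\Cup y=y\Cup x$, whereas you first obtain $x\Cap y=y\Cap x$ from $(QW_3)$ and then dualize via Proposition~\ref{qbe-20}(3); you are also more explicit than the paper in verifying antisymmetry for the Wajsberg side.
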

\begin{proof}
Let $X$ be a meta-Wajsberg algebra such that $\le$ is antisymmetric, and let $x,y\in X$. 
Since by $(QW_3^{'})$, $(x\Cup y)\ra (y\Cup x)=1$ and $(y\Cup x)\ra (x\Cup y)=1$, we have $x\Cup y\le y\Cup x$ and 
$y\Cup x\le x\Cup y$, hence $x\Cup y=y\Cup x$. It follows that $X$ is commutative, hence it is a Wajsberg algebra. 
Conversely, if $X$ is a Wajsberg algebra, then by commutativity, $(x\Cup y)\ra (y\Cup x)=(x\Cup y)\ra (x\Cup y)=1$, 
for all $x,y\in X$. Thus $X$ satisfies condition $(QW_3^{'})$, so that it is a meta-Wajsberg algebra. 
\end{proof}

$\vspace*{1mm}$

\section{Commutative center of implicative-orthomodular algebras}

The commutative center of an implicative-orthomodular algebra $X$ is defined as the set of those elements of $X$ that commute with all other elements of $X$. 
We study certain properties of the commutative center and we prove that it is a Wajsberg subalgebra of $X$. 
In what follows, $(X,\ra,^*,1)$ will be an implicative-orthomodular algebra, unless otherwise stated.

\begin{definition} \label{ciom-10} 
\emph{
We say that the elements $x,y\in X$ \emph{commute}, denoted by $x \mathcal{C} y$, if $x\Cup y=y\Cup x$. 
The \emph{commutative center} of $X$ is the set $\mathcal{Z}(X)=\{x\in X\mid x \mathcal{C} y$, for all $y\in X\}$.  
}
\end{definition}

\noindent
Obviously $0,1\in \mathcal{Z}(X)$. 

\begin{proposition} \label{ciom-20} Let $x,y\in X$. The following are equivalent: \\
$(a)$ $x \mathcal{C} y;$ \\
$(b)$ $x\Cap y=y\Cap x;$ \\
$(c)$ $(x\ra y)\ra (x\Cap y)=x$.
\end{proposition}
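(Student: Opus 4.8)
The plan is to prove the cycle $(a)\Rightarrow(b)\Rightarrow(c)\Rightarrow(a)$, exploiting the duality between $\Cap$ and $\Cup$ (Proposition \ref{qbe-20}$(3)$) and the identities for IOM algebras established in Propositions \ref{gqw-30-10}, \ref{gqw-50}, \ref{gqw-50-10}. For $(a)\Rightarrow(b)$: assuming $x\Cup y=y\Cup x$, I would apply $^*$ to both sides and use $(x\Cup y)^*=x^*\Cap y^*$ together with $(y\Cup x)^*=y^*\Cap x^*$ to conclude $x^*\Cap y^*=y^*\Cap x^*$; then replacing $x,y$ by $x^*,y^*$ (legitimate by involutivity) yields $x\Cap y=y\Cap x$. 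The same computation run backwards gives $(b)\Rightarrow(a)$, so in fact $(a)\Leftrightarrow(b)$ is immediate and essentially free.

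The substantive part is linking $(b)$ (equivalently $(a)$) with $(c)$. For $(b)\Rightarrow(c)$: starting from $x\Cap y=y\Cap x$ and using the definition $x\Cup y=(x\ra y)\ra y$ together with Proposition \ref{gqw-50}$(2)$, which states $(x\ra y)\ra (y\Cap x)=x$ in any IOM algebra, I would substitute $y\Cap x=x\Cap y$ to obtain $(x\ra y)\ra (x\Cap y)=x$, which is exactly $(c)$. So this direction is also short, provided Proposition \ref{gqw-50}$(2)$ is read in the right variable order.

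For $(c)\Rightarrow(b)$ (or $(c)\Rightarrow(a)$): from $(x\ra y)\ra (x\Cap y)=x$, I want to recover symmetry. The idea is to compare with the unconditional identity $(x\ra y)\ra (y\Cap x)=x$ from Proposition \ref{gqw-50}$(2)$; this gives $(x\ra y)\ra (x\Cap y)=(x\ra y)\ra (y\Cap x)$. Now I would invoke the cancellation law, Proposition \ref{qbe-30}$(1)$: if $u,v\le_Q z$ and $z\ra u=z\ra v$ then $u=v$. Here $z:=x\ra y$, and by Proposition \ref{gqw-50}$(10)$ both $x\Cap y$ and $y\Cap x$ are $\le_Q x\ra y$, so cancellation yields $x\Cap y=y\Cap x$, i.e. $(b)$, hence $(a)$ by the first paragraph. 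The main obstacle is bookkeeping: making sure the variable orders in Proposition \ref{gqw-50}$(2)$ and $(10)$ match what is needed (in particular that \emph{both} $x\Cap y$ and $y\Cap x$ lie below $x\ra y$ with respect to $\le_Q$, which Proposition \ref{gqw-50}$(10)$ supplies), and checking that the hypotheses of the cancellation law are met. No genuinely hard computation is expected; everything reduces to citing the right earlier identity.
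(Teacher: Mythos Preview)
Your treatment of $(b)\Leftrightarrow(c)$ is correct and matches the paper: $(b)\Rightarrow(c)$ is an immediate substitution into Proposition~\ref{gqw-50}$(2)$, and $(c)\Rightarrow(b)$ compares $(c)$ with Proposition~\ref{gqw-50}$(2)$ and applies the cancellation law (Proposition~\ref{qbe-30}$(1)$) with the bounds from Proposition~\ref{gqw-50}$(10)$.

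However, your argument for $(a)\Leftrightarrow(b)$ has a genuine gap. From the hypothesis $x\Cup y=y\Cup x$ (for the \emph{specific} pair $x,y$) you correctly deduce $x^*\Cap y^*=y^*\Cap x^*$. But then ``replacing $x,y$ by $x^*,y^*$'' is not a legitimate move: the equation $u^*\Cap v^*=v^*\Cap u^*$ has only been established for $u=x$, $v=y$, not universally. To substitute $u=x^*$, $v=y^*$ you would need the hypothesis $x^*\Cup y^*=y^*\Cup x^*$, which is exactly what you do not yet know. The implication ``$\Cup$ commutes for one pair $\Rightarrow$ $\Cap$ commutes for the same pair'' is not free from duality alone.

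The paper closes this gap with a short but essential computation using Proposition~\ref{gqw-50}$(2)$: from $x^*\Cap y^*=y^*\Cap x^*$ one writes
\[
x^*\Cup y^*=(x^*\ra y^*)\ra y^*=(x^*\ra y^*)\ra\bigl((y^*\ra x^*)\ra(x^*\Cap y^*)\bigr),
\]
swaps the two outer antecedents by $(BE_4)$, uses the hypothesis to replace $x^*\Cap y^*$ by $y^*\Cap x^*$, and applies Proposition~\ref{gqw-50}$(2)$ again to collapse the inner expression to $x^*$, obtaining $x^*\Cup y^*=y^*\Cup x^*$ and hence $x\Cap y=y\Cap x$. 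The direction $(b)\Rightarrow(a)$ is the same computation with $x,y$ in place of $x^*,y^*$. You should replace your ``essentially free'' step with this argument.
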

\begin{proof}
$(a)\Rightarrow (b)$ If $x \mathcal{C} y$, then $x^*\Cap y^*=(x\Cup y)^*=(y\Cup x)^*=y^*\Cap x^*$. 
Using twice Proposition \ref{gqw-50}$(2)$, we get: \\
$\hspace*{2.00cm}$ $x^*\Cup y^*=(x^*\ra y^*)\ra y^*=(x^*\ra y^*)\ra ((y^*\ra x^*)\ra (x^*\Cap y^*))$ \\
$\hspace*{3.30cm}$ $=(y^*\ra x^*)\ra ((x^*\ra y^*)\ra (x^*\Cap y^*))$ \\
$\hspace*{3.30cm}$ $=(y^*\ra x^*)\ra ((x^*\ra y^*)\ra (y^*\Cap x^*))$ \\
$\hspace*{3.30cm}$ $=(y^*\ra x^*)\ra x^*=y^*\Cup x^*$. \\
It follows that $x\Cap y=(x^*\Cup y^*)^*=(y^*\Cup x^*)^*=y\Cap x$. \\
$(b)\Rightarrow (a)$ Assume that $x\Cap y=y\Cap x$. By Proposition \ref{gqw-50}$(2)$, we have: \\
$\hspace*{2.00cm}$ $x\Cup y=(x\ra y)\ra y=(x\ra y)\ra ((y\ra x)\ra (x\Cap y))$ \\
$\hspace*{3.00cm}$ $=(y\ra x)\ra ((x\ra y)\ra (x\Cap y))=(y\ra x)\ra ((x\ra y)\ra (y\Cap x))$ \\
$\hspace*{3.00cm}$ $=(y\ra x)\ra x=y\Cup x$. \\
$(b)\Rightarrow (c)$ Since $x\Cap y=y\Cap x$, we have $(x\ra y)\ra (x\Cap y)=(x\ra y)\ra (y\Cap x)=x$. \\
$(b)\Rightarrow (c)$ Using $(b)$ and Proposition \ref{gqw-50}$(2)$, we have 
$(x\ra y)\ra (x\Cap y)=(x\ra y)\ra (y\Cap x)=x$. 
Taking into consideration that $x\Cap y, y\Cap x\le_Q x\ra y$ (Proposition \ref{gqw-50}$(10)$), by  
Proposition \ref{qbe-30}$(1)$ (cancellation law), we get $x\Cap y=y\Cap x$. 
\end{proof}

\begin{lemma} \label{ciom-30} Let $x,y\in X$. The following hold: \\
$(1)$ if $x \mathcal{C} y$, then $x^* \mathcal{C} y^*;$ \\
$(2)$ if $x\le_Q y$ or $y\le_Q x$, then $x \mathcal{C} y;$ \\
$(3)$ $(x\Cap y)^* \mathcal{C} (x\ra y)^*;$ \\
$(4)$ $\mathcal{Z}(X)$ is closed under $^*$. 
\end{lemma}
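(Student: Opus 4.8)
The plan is to prove the four items in order, with (3) and (4) reducing quickly to (1) and (2), so that only the first two require genuine work. For (1), recall that $x\,\mathcal{C}\,y$ means $x\Cup y=y\Cup x$; applying $^*$ to both sides and using Proposition \ref{qbe-20}(3), which gives $(x\Cup y)^*=x^*\Cap y^*$, the hypothesis becomes $x^*\Cap y^*=y^*\Cap x^*$. By the equivalence $(a)\Leftrightarrow(b)$ of Proposition \ref{ciom-20}, now applied to the pair $x^*,y^*$, this is precisely $x^*\,\mathcal{C}\,y^*$.

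For (2), note first that the relation $\mathcal{C}$ is symmetric directly from its definition, so it suffices to treat the case $x\le_Q y$. Since $X$ is an implicative-orthomodular algebra, it satisfies conditions $(IOM)$--$(IOM'')$ by Theorem \ref{gqw-60}, so Propositions \ref{qbe-20} and \ref{gqw-30-10} are available. Proposition \ref{qbe-20}(1) gives $y=x\Cup y$, while Proposition \ref{gqw-30-10}(2) gives $y\Cup x=y$; hence $x\Cup y=y=y\Cup x$, i.e. $x\,\mathcal{C}\,y$. The case $y\le_Q x$ then follows by symmetry of $\mathcal{C}$.

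Item (3) is then immediate: by Proposition \ref{gqw-50}(10) we have $x\Cap y\le_Q x\ra y$, so (2) yields $(x\Cap y)\,\mathcal{C}\,(x\ra y)$ and (1) yields $(x\Cap y)^*\,\mathcal{C}\,(x\ra y)^*$. For (4), take $x\in\mathcal{Z}(X)$ and an arbitrary $y\in X$; since $x\,\mathcal{C}\,y^*$ by definition of the commutative center, part (1) gives $x^*\,\mathcal{C}\,y^{**}=y$, and as $y$ was arbitrary we conclude $x^*\in\mathcal{Z}(X)$. I do not anticipate any real obstacle; the only point that needs care is the legitimacy of invoking the structural Propositions \ref{qbe-20}, \ref{gqw-30-10} and \ref{gqw-50}, which is guaranteed by the standing hypothesis that $X$ is an IOM algebra together with Theorem \ref{gqw-60}.
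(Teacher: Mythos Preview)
Your proof is correct and follows essentially the same approach as the paper. The only cosmetic differences are that in (2) you verify $x\Cup y=y\Cup x$ via Propositions \ref{qbe-20}(1) and \ref{gqw-30-10}(2), whereas the paper verifies the equivalent $x\Cap y=y\Cap x$; and in (3) and (4) you invoke part (1) directly, while the paper re-derives the needed instance of (1) inline (via $(x\ra y)^*\le_Q (x\Cap y)^*$ in (3), and via $x^*\Cap z=(x\Cup z^*)^*=(z^*\Cup x)^*=z\Cap x^*$ in (4)).
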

\begin{proof}
$(1)$ It follows from the proof of Proposition \ref{ciom-20}$((a)\Rightarrow (b))$. \\
$(2)$ If $x\le_Q y$, then $x=x\Cap y$ and, by Proposition \ref{qbe-20}$(1)$ we have $x=y\Cap x$. 
It follows that $x\Cap y=y\Cap x$, hence $x \mathcal{C} y$. 
Similarly $y\le_Q x$ implies $y \mathcal{C} x$, that is $x \mathcal{C} y$. \\
$(3)$ Since $x\Cap y\le_Q y\le_Q x\ra y$, we get $(x\ra y)^*\le_Q (x\Cap y)^*$. 
Applying $(2)$, it follows that $(x\Cap y)^* \mathcal{C} (x\ra y)^*$. \\
$(4)$ Let $x\in \mathcal{Z}(X)$, and let $z\in X$, so that $x \mathcal{C} z^*$. 
Then we have $x^*\Cap z=(x\Cup z^*)^*=(z^*\Cup x)^*=z\Cap x^*$, and applying Proposition \ref{ciom-20} we get 
$x^* \mathcal{C} z$, that is $x^*\in \mathcal{Z}(X)$. 
\end{proof}

\begin{proposition} \label{ciom-50} If $x,y,z\in X$ such that $x \mathcal{C} y$ and $x \mathcal{C} z$, then 
$(x\Cup y)\Cup z=y\Cup (x\Cup z)$. 
\end{proposition}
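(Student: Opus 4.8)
The plan is to derive the identity from a short sequence of already‑established identities about $\Cup$, invoking the two commuting hypotheses at exactly two points. First I would restate the hypotheses symmetrically via Proposition \ref{ciom-20}: $x\,\mathcal{C}\,y$ is the same as $x\Cup y=y\Cup x$, and $x\,\mathcal{C}\,z$ is the same as $x\Cup z=z\Cup x$. From Proposition \ref{gqw-30-10}(1) one always has $x\le_Q z\Cup x$; combining this with $x\Cup z=z\Cup x$ gives $x\le_Q x\Cup z$, and then Proposition \ref{gqw-30-10}(2) yields the absorption fact $(x\Cup z)\Cup x=x\Cup z$. This is the only place where the hypothesis $x\,\mathcal{C}\,z$ is used.

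Starting from the left‑hand side, I would then run the chain
\[
(x\Cup y)\Cup z \;=\; (y\Cup x)\Cup z \;=\; (y\Cup x)\Cup(x\Cup z),
\]
where the first step is the hypothesis $x\Cup y=y\Cup x$ and the second is Proposition \ref{gqw-50-10}(6) read from right to left (with $(x,y,z)$ replaced by $(y,x,z)$, so that it reads $(y\Cup x)\Cup(x\Cup z)=(y\Cup x)\Cup z$). It then remains to prove the single identity $(y\Cup x)\Cup(x\Cup z)=y\Cup(x\Cup z)$, i.e.\ that the inner ``$\Cup x$'' may be deleted.

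For this last identity I would expand $\Cup$ by definition, $(y\Cup x)\Cup(x\Cup z)=\bigl((y\Cup x)\ra(x\Cup z)\bigr)\ra(x\Cup z)$, then use the absorption fact from the first paragraph to rewrite the occurrence of $x\Cup z$ inside $(y\Cup x)\ra(x\Cup z)$ as $(x\Cup z)\Cup x$, and finally apply Proposition \ref{gqw-50-10}(13), $(z\Cup x)\ra(y\Cup x)=z\ra(y\Cup x)$, with the substitution $z\mapsto y$, $y\mapsto x\Cup z$ (and $x$ unchanged), to get $(y\Cup x)\ra(x\Cup z)=y\ra(x\Cup z)$. Substituting back gives $(y\Cup x)\Cup(x\Cup z)=\bigl(y\ra(x\Cup z)\bigr)\ra(x\Cup z)=y\Cup(x\Cup z)$, which completes the chain.

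The hard part is not computation but spotting the right rewriting: the statement looks like associativity of $\Cup$ on a commuting triple, yet $\Cup$ is not associative in general, so one has to notice that Proposition \ref{gqw-50-10}(6) already provides the needed pseudo‑associativity once the middle term has the shape $b\Cup c$, and that the hypothesis $x\,\mathcal{C}\,z$ is precisely what upgrades the always‑true $x\le_Q z\Cup x$ to $x\le_Q x\Cup z$, which in turn licenses the rewriting $x\Cup z=(x\Cup z)\Cup x$ needed to apply Proposition \ref{gqw-50-10}(13). No case analysis is required, and antisymmetry of $\le_Q$ is not needed for this particular statement.
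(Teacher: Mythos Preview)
Your proof is correct and follows essentially the same route as the paper: use $x\,\mathcal{C}\,y$ to swap $x\Cup y$ for $y\Cup x$, apply Proposition~\ref{gqw-50-10}(6) to insert the middle $x$, and then reduce $(y\Cup x)\Cup(x\Cup z)$ to $y\Cup(x\Cup z)$ via Proposition~\ref{gqw-50-10}(13). The only difference is in how you make (13) applicable: the paper simply uses $x\,\mathcal{C}\,z$ to rewrite $x\Cup z$ as $z\Cup x$ (so the target has the shape ``$\ldots\Cup x$'' directly), whereas you first derive the absorption $(x\Cup z)\Cup x=x\Cup z$ from $x\le_Q x\Cup z$ and insert that; both maneuvers serve the same purpose, the paper's being a shade shorter.
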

\begin{proof} 
Using the commutativity and Proposition \ref{gqw-50-10}$(6)$,$(13)$, we get: \\
$\hspace*{2.10cm}$ $(x\Cup y)\Cup z=(y\Cup x)\Cup z=(y\Cup x)\Cup (x\Cup z)$ \\
$\hspace*{4.00cm}$ $=((y\Cup x)\ra (x\Cup z))\ra (x\Cup z)=((y\Cup x)\ra (z\Cup x))\ra (z\Cup x)$ \\
$\hspace*{4.00cm}$ $=(y\ra (z\Cup x))\ra (z\Cup x)=y\Cup (z\Cup x)=y\Cup (x\Cup z)$. 
\end{proof}

\begin{corollary} \label{ciom-60} If $x,y,z\in X$ such that $x \mathcal{C} y$, $y \mathcal{C} z$, $x \mathcal{C} z$, then: \\
$(1)$ $(x\Cup y)\Cup z=z\Cup (x\Cup y);$ \\
$(2)$ $y\Cup z\le_Q y\Cup (z\Cup x)$.  
\end{corollary}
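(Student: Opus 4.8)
The plan is to use Proposition \ref{ciom-50}, which already gives a rebracketing of triple suprema whenever the first element commutes with the other two, together with Lemma \ref{ciom-30}(2) (which converts a $\le_Q$ relation into a commuting pair) and the monotonicity of $\Cup$ in the $\le_Q$ ordering from Proposition \ref{gqw-30-10}(4). For part $(1)$, I would first apply Proposition \ref{ciom-50} with the roles of the three elements permuted so that the element sitting in front is the one known to commute with the other two; concretely, since $x\mathcal{C}y$ and $x\mathcal{C}z$, Proposition \ref{ciom-50} gives $(x\Cup y)\Cup z = y\Cup (x\Cup z)$, and applying it again after a suitable cyclic relabeling (using $y\mathcal{C}z$, $y\mathcal{C}x$, i.e. the hypotheses plus symmetry of $\mathcal{C}$) should yield $y\Cup (x\Cup z) = z\Cup (x\Cup y)$ or an equivalent form, after which combining the two identities produces $(x\Cup y)\Cup z = z\Cup(x\Cup y)$. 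Some care is needed here because $\Cup$ is not assumed associative in an IOM algebra in general, so each rebracketing must be justified by an explicit appeal to Proposition \ref{ciom-50} rather than taken for granted.

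For part $(2)$, the idea is that $y\Cup z \le_Q (y\Cup z)\Cup x$ by Proposition \ref{gqw-50-10}(4) (with $x\Cap(\cdot)$ read off dually, or directly $a\le_Q a\Cup b$), so it suffices to identify $(y\Cup z)\Cup x$ with $y\Cup(z\Cup x)$. This last identity is exactly Proposition \ref{ciom-50} applied with $y$ in the leading position, using $y\mathcal{C}z$ and $y\mathcal{C}x$ — both of which are among the hypotheses (the latter by symmetry of $\mathcal{C}$). Thus $y\Cup z \le_Q (y\Cup z)\Cup x = y\Cup(z\Cup x)$, which is the claim.

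The main obstacle I anticipate is bookkeeping: making sure every bracket rearrangement of a triple supremum is backed by a valid instance of Proposition \ref{ciom-50}, since the element that must appear first in that proposition is the one commuting with the other two, and the hypotheses $x\mathcal{C}y$, $y\mathcal{C}z$, $x\mathcal{C}z$ (with $\mathcal{C}$ symmetric) must be invoked in exactly the right combination for each step. A secondary point is confirming $a\le_Q a\Cup b$ in an IOM algebra; this follows from Proposition \ref{gqw-50-10}(4), or more directly from Proposition \ref{gqw-50}(8)/\ref{gqw-50-10}(4) which state $x\Cap y\le_Q y\le_Q x\Cup y$. Once these small facts are lined up, the computation itself is a short chain of equalities and one inequality, entirely parallel to the proof of Proposition \ref{ciom-50}.
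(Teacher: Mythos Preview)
Your plan for part $(1)$ is correct and matches the paper's approach: repeatedly apply Proposition \ref{ciom-50} and the pairwise commutations to shuffle the brackets until $(x\Cup y)\Cup z = z\Cup (x\Cup y)$ drops out.

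Your argument for part $(2)$, however, contains two genuine slips. First, Propositions \ref{gqw-50}(8) and \ref{gqw-50-10}(4) state $x\Cap y\le_Q y\le_Q x\Cup y$, which gives $a\le_Q b\Cup a$, \emph{not} $a\le_Q a\Cup b$. In a non-commutative IOM algebra these differ: in Example \ref{gqw-190} one computes $d\Cup a=a$ and $d\Cap a=0$, so $d\not\le_Q d\Cup a$. Thus ``$y\Cup z\le_Q (y\Cup z)\Cup x$'' is not a direct citation of those propositions. Second, Proposition \ref{ciom-50} with $y$ in the leading position yields $(y\Cup z)\Cup x = z\Cup (y\Cup x)$, not $y\Cup (z\Cup x)$; getting from one to the other requires additional rebracketing steps, essentially the full chain from part $(1)$.

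Both issues are repairable, and the paper's argument shows the cleanest fix: start instead from $z\le_Q x\Cup z = z\Cup x$ (the first inequality \emph{is} Proposition \ref{gqw-50-10}(4), the equality is $x\mathcal{C}z$), apply the monotonicity of Proposition \ref{gqw-30-10}(4) in the first argument to obtain $z\Cup y\le_Q (z\Cup x)\Cup y$, and then use $y\mathcal{C}z$ on the left and part $(1)$ (relabeled) on the right to reach $y\Cup z\le_Q y\Cup (z\Cup x)$. Alternatively, you could salvage your own route by first invoking part $(1)$ to get $(y\Cup z)\mathcal{C}x$, which then legitimizes $(y\Cup z)\Cup x = x\Cup (y\Cup z)$ and lets you use $a\le_Q b\Cup a$ correctly.
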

\begin{proof}
$(1)$ By hypothesis and applying Proposition \ref{ciom-50}, we get: \\
$\hspace*{2.10cm}$ $(x\Cup y)\Cup z=y\Cup (x\Cup z)=y\Cup (z\Cup x)$ \\
$\hspace*{4.00cm}$ $=(z\Cup y)\Cup x=(y\Cup z)\Cup x=z\Cup (y\Cup x)=z\Cup (x\Cup y)$. \\
$(2)$ From $z\le_Q x\Cup z=z\Cup x$, we get $z\Cup y\le_Q (z\Cup x)\Cup y$, hence by $(1)$, 
$y\Cup z\le_Q y\Cup (z\Cup x)$. 
\end{proof}

\begin{corollary} \label{ciom-70} $\mathcal{Z}(X)$ is closed under $\Cup$ and $\Cap$. 
\end{corollary}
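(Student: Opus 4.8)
The plan is to show that if $x,y\in\mathcal{Z}(X)$ then $x\Cup y\in\mathcal{Z}(X)$, and then derive closure under $\Cap$ from closure under $\Cup$ and under $^*$ (Lemma \ref{ciom-30}$(4)$), using the De Morgan identity $x\Cap y=(x^*\Cup y^*)^*$ from Proposition \ref{qbe-20}$(3)$. So the real content is the $\Cup$-part: given $x,y\in\mathcal{Z}(X)$ and an arbitrary $z\in X$, I must prove $(x\Cup y)\Cup z=z\Cup(x\Cup y)$, i.e.\ $x\Cup y\ \mathcal{C}\ z$.

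First I would record that since $x,y\in\mathcal{Z}(X)$ we have $x\,\mathcal{C}\,y$, $x\,\mathcal{C}\,z$ and $y\,\mathcal{C}\,z$, so the hypotheses of Corollary \ref{ciom-60} are met for the triple $x,y,z$ (in any order). Applying Corollary \ref{ciom-60}$(1)$ directly gives $(x\Cup y)\Cup z=z\Cup(x\Cup y)$, which is exactly $x\Cup y\ \mathcal{C}\ z$. Since $z$ was arbitrary, $x\Cup y\in\mathcal{Z}(X)$. For the $\Cap$-part: given $x,y\in\mathcal{Z}(X)$, Lemma \ref{ciom-30}$(4)$ gives $x^*,y^*\in\mathcal{Z}(X)$, hence by the $\Cup$-part $x^*\Cup y^*\in\mathcal{Z}(X)$, and then Lemma \ref{ciom-30}$(4)$ again gives $(x^*\Cup y^*)^*\in\mathcal{Z}(X)$; by Proposition \ref{qbe-20}$(3)$ this element is $x\Cap y$.

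I do not expect a genuine obstacle here, since Corollary \ref{ciom-60}$(1)$ was stated precisely to make this step immediate; the only care needed is to confirm that $x\Cup y\ \mathcal{C}\ z$ is really the statement "$(x\Cup y)\Cup z = z\Cup(x\Cup y)$" unwound from Definition \ref{ciom-10}, and to invoke Corollary \ref{ciom-60}$(1)$ with the roles of the letters matched correctly. If one prefers a self-contained argument for the $\Cap$-case without passing through $^*$, one could instead run Corollary \ref{ciom-60}$(1)$ on $x^*,y^*,z^*$ and take complements, but the route through Lemma \ref{ciom-30}$(4)$ is cleaner. The whole proof is therefore a two-line citation of the preceding corollary and lemma.
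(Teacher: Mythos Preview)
Your proposal is correct and follows essentially the same approach as the paper: for the $\Cup$-part you invoke Corollary~\ref{ciom-60}$(1)$ exactly as the paper does, and for the $\Cap$-part you pass through closure under $^*$ (Lemma~\ref{ciom-30}$(4)$) and the De~Morgan identity $x\Cap y=(x^*\Cup y^*)^*$, which is precisely the paper's route. The only cosmetic difference is that the paper spells out the resulting equality $(x\Cap y)\Cap z=z\Cap(x\Cap y)$ explicitly, whereas you conclude $x\Cap y\in\mathcal{Z}(X)$ directly from the closure properties; both are the same argument.
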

\begin{proof} 
Let $x,y \in \mathcal{Z}(X)$ and let $z \in X$, so that $x \mathcal{C} y$, $y \mathcal{C} z$, 
$x \mathcal{C} z$, and by Corollary \ref{ciom-60} we get $(x\Cup y)\Cup z=z\Cup (x\Cup y)$. 
Thus  $x\Cup y \in \mathcal{Z}(X)$, that is $\mathcal{Z}(X)$ is closed under $\Cup$. 
Since by Lemma \ref{ciom-30}, $\mathcal{Z}(X)$ is closed under $^*$ and $x\Cap y=(x^*\Cup y^*)^*$, 
it follows that \\
$\hspace*{2.10cm}$ $(x\Cap y)\Cap z=((x^*\Cup y^*)\Cup z^*)^*=(z^*\Cup (x^*\Cup z^*))^*=z\Cap (x\Cap y)$, \\ 
that is $\mathcal{Z}(X)$ is also closed under $\Cap$.
\end{proof}

\begin{proposition} \label{ciom-80} The following holds for all $x,y,z\in X$ such that $y \mathcal{C} z$: \\
$\hspace*{4cm}$  $(y\Cup z)\ra x\le_Q (y\ra x)\Cap (z\ra x)$. 
\end{proposition}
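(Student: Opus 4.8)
The plan is to show that $(y\Cup z)\ra x$ lies below both $y\ra x$ and $z\ra x$ with respect to the relation $\le_Q$, and then to merge these two facts using the monotonicity of $\Cap$ in the $\le_Q$-order.

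First I would record that $y\le_Q y\Cup z$ and $z\le_Q y\Cup z$. The inequality $z\le_Q y\Cup z$ is immediate from Proposition \ref{gqw-50-10}$(4)$ (applied with $x\to y$, $y\to z$). For $y\le_Q y\Cup z$, the same proposition (applied with $x\to z$, $y\to y$) gives only $y\le_Q z\Cup y$; here I would invoke the hypothesis $y\,\mathcal{C}\,z$, that is $y\Cup z=z\Cup y$, to rewrite this as $y\le_Q y\Cup z$.

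Next, applying Proposition \ref{gqw-30-10}$(3)$ — in the form ``$a\le_Q b$ implies $b\ra c\le_Q a\ra c$'' — to $y\le_Q y\Cup z$ and to $z\le_Q y\Cup z$ yields $(y\Cup z)\ra x\le_Q y\ra x$ and $(y\Cup z)\ra x\le_Q z\ra x$. To combine these, set $a:=(y\Cup z)\ra x$. From $a\le_Q z\ra x$ we have $a=a\Cap(z\ra x)$, and from $a\le_Q y\ra x$ together with Proposition \ref{gqw-30-10}$(4)$ — in the form ``$u\le_Q v$ implies $u\Cap w\le_Q v\Cap w$'', taken with $w:=z\ra x$ — we get $a\Cap(z\ra x)\le_Q (y\ra x)\Cap(z\ra x)$. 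Hence $a\le_Q (y\ra x)\Cap(z\ra x)$, which is the asserted inequality.

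The only genuinely delicate point is the use of commutativity: without the assumption $y\,\mathcal{C}\,z$ one obtains only $y\le_Q z\Cup y$, and since $\Cup$ need not be commutative in an implicative-orthomodular algebra, the hypothesis is essential to pass to $y\Cup z$. Everything else is a straightforward chain through the monotonicity properties already established in Propositions \ref{gqw-30-10} and \ref{gqw-50-10}.
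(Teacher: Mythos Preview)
Your proof is correct and considerably more direct than the paper's. The paper does not combine the two inequalities $(y\Cup z)\ra x\le_Q y\ra x$ and $(y\Cup z)\ra x\le_Q z\ra x$ via the monotonicity of $\Cap$ as you do; instead it records the auxiliary identities
\[
(\alpha)\quad (((z\Cup y)\ra x)^*\ra (z\ra x)^*)\Cap((y\ra x)^*\ra (z\ra x)^*)=((z\Cup y)\ra x)^*\ra (z\ra x)^*,
\]
\[
(\beta)\quad ((y\Cup z)\ra x)\Cap(z\ra x)=(y\Cup z)\ra x,
\]
and then performs a lengthy rewriting chain that invokes the identity $(x\Cap(y\Cap z))^*=((z\ra x)\Cap(z\ra y))\ra z^*$ from Proposition~\ref{qbe-30}$(9)$, applying the hypothesis $y\,\mathcal{C}\,z$ twice inside the chain to pass between $y\Cup z$ and $z\Cup y$. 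Your route uses commutativity once, up front, to obtain $y\le_Q y\Cup z$, and thereafter only the monotonicity facts from Proposition~\ref{gqw-30-10}$(3)$,$(4)$ together with the definition of $\le_Q$; this avoids Proposition~\ref{qbe-30}$(9)$ entirely and makes the role of the commutativity hypothesis more transparent.
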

\begin{proof} 
Since $y\le_Q z\Cup y$, it follows that $(z\Cup y)\ra x\le_Q y\ra x$, hence $(y\ra x)^*\le_Q ((z\Cup y)\ra x)^*$. 
It follows that $((z\Cup y)\ra x)^*\ra (z\ra x)^*\le_Q (y\ra x)^*\ra (z\ra x)^*$, that is \\
$(\alpha)$ $(((z\Cup y)\ra x)^*\ra (z\ra x)^*)\Cap ((y\ra x)^*\ra (z\ra x)^*)=((z\Cup y)\ra x)^*\ra (z\ra x)^*$. \\
On the other hand, from $z\le_Q y\Cup z$, we get $(y\Cup z)\ra x\le_Q z\ra x$, so that \\ 
$(\beta)$ $((y\Cup z)\ra x)\Cap (z\ra x)=(y\Cup z)\ra x$. \\
Then we have: \\
$\hspace*{0.50cm}$ $(y\Cup z)\ra x=((y\Cup z)\ra x)\Cap (z\ra x)$ (by $\beta$) \\
$\hspace*{1.00cm}$ $=((((y\Cup z)\ra x)^*\ra (z\ra x)^*)\ra (z\ra x)^*)^*$ \\
$\hspace*{1.00cm}$ $=((((z\Cup y)\ra x)^*\ra (z\ra x)^*)\ra (z\ra x)^*)^*$ ($y \mathcal{C} z$) \\
$\hspace*{1.00cm}$ $=((((z\Cup y)\ra x)^*\ra (z\ra x)^*)\Cap ((y\ra x)^*\ra (z\ra x)^*))\ra (z\ra x)^*)^*$ 
                                                                                            (by $\alpha$) \\
$\hspace*{1.00cm}$ $=((((y\Cup z)\ra x)^*\ra (z\ra x)^*)\Cap ((y\ra x)^*\ra (z\ra x)^*))\ra (z\ra x)^*)^*$ 
                                                                                            ($y \mathcal{C} z$) \\
$\hspace*{1.00cm}$ $=(((z\ra x)\ra ((y\Cup z)\ra x))\Cap ((z\ra x)\ra (y\ra x))\ra (z\ra x)^*)^*$ \\
$\hspace*{1.00cm}$ $=((y\Cup z)\ra x)\Cap ((y\ra x)\Cap (z\ra x))$ (Prop. \ref{qbe-30}$(9)$). \\
Hence $(y\Cup z)\ra x\le_Q (y\ra x)\Cap (z\ra x)$. 
\end{proof}

\begin{proposition} \label{ciom-90} The following holds for all $x,y,z\in X$ such that $x \mathcal{C} y$, 
$y \mathcal{C} z$, $x \mathcal{C} z$: \\
$\hspace*{4cm}$  $(y\ra x)\Cap (z\ra x)\le (y\Cup z)\ra x$. 
\end{proposition}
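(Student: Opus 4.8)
The plan is to prove the reverse inequality to Proposition~\ref{ciom-80}, namely $(y\ra x)\Cap (z\ra x)\le (y\Cup z)\ra x$, and then (presumably in the surrounding text) combine it with Proposition~\ref{ciom-80} and the antisymmetry of $\le_Q$ to conclude equality on $\mathcal{Z}(X)$. The natural strategy is to set $u:=(y\ra x)\Cap(z\ra x)$ and show $u\ra((y\Cup z)\ra x)=1$, i.e. $u\le(y\Cup z)\ra x$; equivalently, by Lemma~\ref{qbe-10}$(5)$--$(7)$, it suffices to show $(y\Cup z)\ra u^*\le$ (something) or to manipulate $((y\Cup z)\ra x)^*\ra u^*$. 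Concretely, since $(y\Cup z)\ra x=(y^*\ra z^*)\ra\dots$ is awkward, I would instead start from the right-hand side: using $(QW_3')$-type symmetry one has $(y\Cup z)\ra x=(z\Cup y)\ra x$ by $y\,\mathcal{C}\,z$, and then by Proposition~\ref{gqw-50-10}$(13)$ (or $(9)$) rewrite $(z\Cup y)\ra x$ in a form that exposes $y\ra x$ and $z\ra x$ as the two meetands.

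The key computational steps, in order, are: (i) record that $y\le_Q z\Cup y=y\Cup z$ and $z\le_Q y\Cup z$ (using $x\,\mathcal C\,y$ etc.\ and Proposition~\ref{gqw-50-10}$(4)$), so that by Proposition~\ref{gqw-30-10}$(3)$ both $(y\Cup z)\ra x\le_Q y\ra x$ and $(y\Cup z)\ra x\le_Q z\ra x$, whence $(y\Cup z)\ra x\le_Q (y\ra x)\Cap(z\ra x)$ — wait, that is the wrong direction, it gives $\le_Q$ the other way, which is exactly Proposition~\ref{ciom-80}; (ii) therefore the content here is genuinely the $\le$ (not $\le_Q$) inequality in the opposite direction, so I must work with $\ra$ directly. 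I would compute $((y\ra x)\Cap(z\ra x))\ra((y\Cup z)\ra x)$ and aim to reduce it to $1$. Using Lemma~\ref{qbe-10}$(6)$ this equals $((y\Cup z)\ra x)^*\ra((y\ra x)\Cap(z\ra x))^*=((y\Cup z)\ra x)^*\ra((y\ra x)^*\Cup(z\ra x)^*)$ by Proposition~\ref{qbe-20}$(3)$. Now Proposition~\ref{qbe-30}$(3)$, $x\ra((y\ra x^*)^*\Cup z)=y\Cup(x\ra z)$, is the right tool: with appropriate substitutions this should transform the target into $(\text{something})\Cup(\text{something})\ra\dots$ where one factor collapses because $y\,\mathcal C\,z$ forces $(y\ra x)^*\,\mathcal C\,(z\ra x)^*$ or similar (via Lemma~\ref{ciom-30}$(1)$,$(3)$), and the remaining term is of the form $t\Cup t^*$-adjacent that reduces to $1$ by $(QW_3')$ applied to Corollary~\ref{ciom-60}$(2)$.

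I expect the main obstacle to be choosing the correct substitutions so that the three commutativity hypotheses $x\,\mathcal C\,y$, $y\,\mathcal C\,z$, $x\,\mathcal C\,z$ are all actually used — Proposition~\ref{ciom-80} needed only $y\,\mathcal C\,z$, so the extra two are doing real work here, most likely through Corollary~\ref{ciom-60}$(2)$, which states $y\Cup z\le_Q y\Cup(z\Cup x)$. That $\le_Q$-inequality, combined with Proposition~\ref{gqw-30-10}$(3)$, gives $(y\Cup(z\Cup x))\ra x\le_Q(y\Cup z)\ra x$, and by Proposition~\ref{gqw-50-10}$(9)$, $(y\Cup(z\Cup x))\ra x=y\ra x$ possibly after another rewrite; pushing this through should yield $y\ra x\le_Q(y\Cup z)\ra x$ — again too strong in the wrong direction, so the real path must instead extract from Corollary~\ref{ciom-60}$(2)$ an honest $\le$-statement of the shape $(y\ra x)\Cap(z\ra x)\le(y\Cup z)\ra x$ by an argument parallel to the displayed chain in the proof of Proposition~\ref{ciom-80} but run ``backwards'' using $(\beta)$ and $(\alpha)$ with the roles of the two meetands swapped. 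So I would mirror that proof: set up analogues of $(\alpha)$ and $(\beta)$ from the inclusions $z\le_Q y\Cup z$ and $y\le_Q y\Cup z$, invoke Proposition~\ref{qbe-30}$(9)$ at the end, and read off the inequality; the delicate point will be verifying that the algebraic identity closes with $\le$ rather than merely $\le_Q$, which is where the full strength of the three-fold commutativity (through Corollary~\ref{ciom-60}) and antisymmetry-free reasoning must be deployed.
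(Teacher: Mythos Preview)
Your plan correctly isolates the two essential endpoints: the target is to show $((y\ra x)\Cap(z\ra x))\ra((y\Cup z)\ra x)=1$, which by contraposition and Proposition~\ref{qbe-20}$(3)$ rewrites as $((y\Cup z)\ra x)^*\ra((y\ra x)^*\Cup(z\ra x)^*)$; and the three-fold commutativity enters through Corollary~\ref{ciom-60}$(2)$, giving $y\Cup z\le_Q y\Cup(z\Cup x)$, hence $(y\Cup z)\ra(y\Cup(z\Cup x))=1$. These are precisely the start and end of the paper's argument.

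Where your proposal drifts is in the bridge between them. The suggestion to ``mirror'' the $(\alpha)/(\beta)$ structure of Proposition~\ref{ciom-80} and invoke Proposition~\ref{qbe-30}$(3)$ or $(9)$ is a detour that you do not show closes, and the exploratory back-and-forth never settles on a concrete chain. The paper's route is much more direct: it applies Proposition~\ref{qbe-20}$(8)$, namely $z\ra(x\Cup y)=(x\ra y)\ra(z\ra y)$, \emph{twice} to the expression $(y\Cup z)\ra(y\Cup(z\Cup x))$ --- once to unfold the outer $y\Cup(\cdot)$ and once more to unfold each remaining $z\Cup x$. This produces $((z\ra x)\ra(y\ra x))\ra((z\ra x)\ra((y\Cup z)\ra x))$, after which contraposition (Lemma~\ref{qbe-10}$(6)$) and the exchange law $(BE_4)$ reshape it into exactly $((y\Cup z)\ra x)^*\ra((y\ra x)^*\Cup(z\ra x)^*)$. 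The whole proof is a single chain of equalities starting from $1$; no analogue of $(\alpha)$, $(\beta)$, or Proposition~\ref{qbe-30}$(9)$ is needed, and there is no $\le_Q$-squeeze argument inside this proposition at all.
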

\begin{proof} 
Since by Corollary \ref{ciom-60}, $y\Cup z\le_Q y\Cup (z\Cup x)$, using Proposition \ref{qbe-20}$(4)$ we get 
$y\Cup z\le y\Cup (z\Cup x)$. Then we have: \\
$\hspace*{2.20cm}$ $1=(y\Cup z)\ra (y\Cup (z\Cup x))$ \\
$\hspace*{2.50cm}$ $=(y\ra (z\Cup x))\ra ((y\Cup z)\ra (z\Cup x))$ (Prop. \ref{qbe-20}$(8)$) \\
$\hspace*{2.50cm}$ $=((z\ra x)\ra (y\ra x))\ra ((z\ra x)\ra ((y\Cup z)\ra x))$ (Prop. \ref{qbe-20}$(8)$) \\
$\hspace*{2.50cm}$ $=((y\ra x)^*\ra (z\ra x)^*)\ra (((y\Cup z)\ra x)^*\ra (z\ra x)^*)$ \\
$\hspace*{2.50cm}$ $=((y\Cup z)\ra x)^*\ra (((y\ra x)^*\ra (z\ra x)^*)\ra (z\ra x)^*)$ \\
$\hspace*{2.50cm}$ $=((y\Cup z)\ra x)^*\ra ((y\ra x)^*\Cup (z\ra x)^*)$ \\
$\hspace*{2.50cm}$ $=((y\Cup z)\ra x)^*\ra ((y\ra x)\Cap (z\ra x))^*$ \\
$\hspace*{2.50cm}$ $=((y\ra x)\Cap (z\ra x))\ra ((y\Cup z)\ra x)$. \\
Hence $(y\ra x)\Cap (z\ra x)\le (y\Cup z)\ra x$. 
\end{proof}

\begin{theorem} \label{ciom-100} The following hold for all $x,y,z\in X$ such that $x \mathcal{C} y$, 
$y \mathcal{C} z$, $x \mathcal{C} z$: \\
$(1)$ $(y\Cup z)\ra x=(y\ra x)\Cap (z\ra x);$ \\
$(2)$ $x\ra (y\Cap z)=(x\ra y)\Cap (x\ra z)$.  
\end{theorem}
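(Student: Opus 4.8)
The plan is to obtain both identities by combining the two one-sided estimates already proved in Propositions \ref{ciom-80} and \ref{ciom-90}, and then to get part $(2)$ from part $(1)$ by dualizing with the involution $^*$.

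For $(1)$, put $a:=(y\Cup z)\ra x$ and $b:=(y\ra x)\Cap (z\ra x)$. Proposition \ref{ciom-80}, which only needs the hypothesis $y\,\mathcal{C}\,z$, gives $a\le_Q b$, while Proposition \ref{ciom-90}, using all three commuting hypotheses $x\,\mathcal{C}\,y$, $y\,\mathcal{C}\,z$, $x\,\mathcal{C}\,z$, gives $b\le a$. By Proposition \ref{gqw-50}$(7)$ ($u\le_Q v$ and $v\le u$ imply $u=v$), applied with $u:=a$ and $v:=b$, we conclude $a=b$, i.e. $(y\Cup z)\ra x=(y\ra x)\Cap (z\ra x)$.

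For $(2)$, first note that by Lemma \ref{ciom-30}$(1)$ the hypotheses $x\,\mathcal{C}\,y$, $y\,\mathcal{C}\,z$, $x\,\mathcal{C}\,z$ transfer to $x^*\,\mathcal{C}\,y^*$, $y^*\,\mathcal{C}\,z^*$, $x^*\,\mathcal{C}\,z^*$. Hence part $(1)$ applies to the triple $x^*,y^*,z^*$, giving $(y^*\Cup z^*)\ra x^*=(y^*\ra x^*)\Cap (z^*\ra x^*)$. Now use Proposition \ref{qbe-20}$(3)$ in the form $(y^*\Cup z^*)^*=y\Cap z$ together with Lemma \ref{qbe-10}$(6)$, $u^*\ra v^*=v\ra u$: the left-hand side becomes $(y\Cap z)^*\ra x^*=x\ra (y\Cap z)$, and on the right $y^*\ra x^*=x\ra y$ and $z^*\ra x^*=x\ra z$. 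This yields $x\ra (y\Cap z)=(x\ra y)\Cap (x\ra z)$.

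The argument is essentially bookkeeping, since the real work is hidden in Propositions \ref{ciom-80} and \ref{ciom-90}. The only two points that require a little care are: in $(1)$, that one of the two estimates is in terms of $\le_Q$ and the other in terms of $\le$, so equality must be extracted via Proposition \ref{gqw-50}$(7)$ rather than from antisymmetry of $\le_Q$ alone; and in $(2)$, verifying via Lemma \ref{ciom-30}$(1)$ that the commuting hypotheses are preserved under $^*$, so that part $(1)$ is legitimately applicable to the dualized triple.
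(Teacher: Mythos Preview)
Your proof is correct and follows essentially the same approach as the paper: part $(1)$ is obtained by combining Propositions \ref{ciom-80} and \ref{ciom-90} via Proposition \ref{gqw-50}$(7)$, and part $(2)$ is derived from $(1)$ by passing to $x^*,y^*,z^*$ and using the involution identities. You are in fact slightly more careful than the paper, which applies $(1)$ to the starred triple without explicitly invoking Lemma \ref{ciom-30}$(1)$ to justify that the commuting hypotheses are preserved under $^*$.
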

\begin{proof} 
$(1)$ It follows by Propositions \ref{ciom-80}, \ref{ciom-90}, \ref{gqw-50}$(7)$. \\
$(2)$ Using $(1)$, we get: \\
$\hspace*{2.00cm}$ $x\ra (y\Cap z)=x\ra (y^*\Cup z^*)^*=(y^*\Cup z^*)\ra x^*$ \\
$\hspace*{4.00cm}$ $=(y^*\ra x^*)\Cap (z^*\ra x^*)=(x\ra y)\Cap (x\ra z)$. 
\end{proof}

\begin{lemma} \label{ciom-110} The following hold: \\
$(1)$ if $y\in \mathcal{Z}(X)$ and $x,z\in X$, then \\  
$\hspace*{2cm}$ $(z\Cap x)^*\ra ((z\ra x)^*\Cap y)=((z\Cap x)^*\ra (z\ra x)^*)\Cap ((z\Cap x)^*\ra y);$ \\
$(2)$ if $x,y,z\in \mathcal{Z}(X)$, then \\ 
$\hspace*{2cm}$ $(z\Cap x)^*\ra y=(y^*\ra z)\Cap (x^*\ra y);$ \\
$(3)$ if $x,y\in \mathcal{Z}(X)$ and $z\in X$, then \\
$\hspace*{2.00cm}$ $z=(z\ra x)\ra (z\Cap x)$ and \\
$\hspace*{2.00cm}$ $(z\ra x)^*=((z\ra x)^*\ra y)\ra ((z\ra x)^*\Cap y)$.  
\end{lemma}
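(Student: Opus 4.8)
The plan is to obtain all three identities as direct consequences of Theorem~\ref{ciom-100} and Proposition~\ref{ciom-20}, using only the closure properties of $\mathcal{Z}(X)$ from Lemma~\ref{ciom-30}, the symmetry of the commuting relation $\mathcal{C}$ (immediate from its definition, or from Proposition~\ref{ciom-20}$((a)\Leftrightarrow(b))$), and the standard involutive identities $x^*\ra y=y^*\ra x$ (Lemma~\ref{qbe-10}$(5)$) and $(x\Cap y)^*=x^*\Cup y^*$ (Proposition~\ref{qbe-20}$(3)$). Essentially no new computation is needed; the work is in picking the right substitutions and checking the commutativity hypotheses.

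For $(1)$, I would apply Theorem~\ref{ciom-100}$(2)$ to the triple $(z\Cap x)^*$, $(z\ra x)^*$, $y$ (in that order). The three required commutations are: $(z\Cap x)^*\,\mathcal{C}\,(z\ra x)^*$, which is exactly Lemma~\ref{ciom-30}$(3)$; and $(z\ra x)^*\,\mathcal{C}\,y$ together with $(z\Cap x)^*\,\mathcal{C}\,y$, both of which hold because $y\in\mathcal{Z}(X)$ commutes with every element of $X$. Theorem~\ref{ciom-100}$(2)$ then yields $(z\Cap x)^*\ra((z\ra x)^*\Cap y)=((z\Cap x)^*\ra(z\ra x)^*)\Cap((z\Cap x)^*\ra y)$, which is the claim.

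For $(2)$, I would first rewrite $(z\Cap x)^*=z^*\Cup x^*$ by Proposition~\ref{qbe-20}$(3)$. Since $x,y,z\in\mathcal{Z}(X)$ and $\mathcal{Z}(X)$ is closed under $^*$ by Lemma~\ref{ciom-30}$(4)$, the elements $z^*,x^*,y$ pairwise commute, so Theorem~\ref{ciom-100}$(1)$ applies to the triple $z^*,x^*,y$ and gives $(z^*\Cup x^*)\ra y=(z^*\ra y)\Cap(x^*\ra y)$. Finally Lemma~\ref{qbe-10}$(5)$ rewrites $z^*\ra y=y^*\ra z$, producing $(y^*\ra z)\Cap(x^*\ra y)$ on the right, as desired. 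For $(3)$, both equalities are instances of the equivalence $(a)\Leftrightarrow(c)$ of Proposition~\ref{ciom-20}: the first, $z=(z\ra x)\ra(z\Cap x)$, holds because $x\in\mathcal{Z}(X)$ gives $z\,\mathcal{C}\,x$; the second, $(z\ra x)^*=((z\ra x)^*\ra y)\ra((z\ra x)^*\Cap y)$, holds because $y\in\mathcal{Z}(X)$ gives $(z\ra x)^*\,\mathcal{C}\,y$.

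The only place that requires care — the ``main obstacle,'' such as it is — is verifying in each part that the commutativity premises of the cited results genuinely hold after the substitution. In particular, part $(1)$ depends on recognizing the non-obvious commutation $(z\Cap x)^*\,\mathcal{C}\,(z\ra x)^*$ supplied by Lemma~\ref{ciom-30}$(3)$, and part $(2)$ depends essentially on the closure of $\mathcal{Z}(X)$ under $^*$; once these are in place, all three identities follow immediately.
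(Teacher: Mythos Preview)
Your proposal is correct and follows essentially the same approach as the paper: for $(1)$ you invoke Theorem~\ref{ciom-100}$(2)$ on the triple $(z\Cap x)^*,(z\ra x)^*,y$ after checking the commutations via Lemma~\ref{ciom-30}$(3)$ and $y\in\mathcal{Z}(X)$, and for $(3)$ you read both identities off Proposition~\ref{ciom-20}$(c)$, exactly as the paper does. The only cosmetic difference is in $(2)$: the paper rewrites $(z\Cap x)^*\ra y=y^*\ra(z\Cap x)$ and applies Theorem~\ref{ciom-100}$(2)$, whereas you rewrite $(z\Cap x)^*=z^*\Cup x^*$ and apply Theorem~\ref{ciom-100}$(1)$; these are dual computations and amount to the same argument.
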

\begin{proof} 
$(1)$ Since $y\in \mathcal{Z}(X)$, we have $y\mathcal{C} (z\Cap x)^*$ and $y\mathcal{C} (z\ra x)^*$. 
By Lemma \ref{ciom-30}$(3)$, $(z\Cap x)^* \mathcal{C} (z\ra x)$, and we apply Theorem \ref{ciom-100}$(2)$ 
for $(z\Cap x)^*$, $(z\ra x)^*$ and $y$. \\
Theorem \ref{ciom-100}$(2)$, since 
$y\mathcal{C} (z\Cap x)^*$ and $y\mathcal{C} (z\ra x)^*$. \\
$(2)$ By Lemma \ref{ciom-30}$(4)$, $y\in \mathcal{Z}(X)$ implies $y^*\in \mathcal{Z}(X)$, and applying 
Theorem \ref{ciom-100}$(2)$, we get: 
$(z\Cap x)^*\ra y=y^*\ra (z\Cap x)=(y^*\ra z)\Cap (y^*\ra x)=(y^*\ra z)\Cap (x^*\ra y)$. \\
$(3)$ Since $x\mathcal{C} z$ and $y\mathcal{C} (z\ra x)^*$, appying Proposition \ref{ciom-20}$(3)$, we get: 
$z=(z\ra x)\ra (z\Cap x)$ and $(z\ra x)^*=((z\ra x)^*\ra y)\ra ((z\ra x)^*\Cap y)$.  
\end{proof}

\begin{proposition} \label{ciom-120} If $x,y\in \mathcal{Z}(X)$, then $x\ra y\in \mathcal{Z}(X)$. 
\end{proposition}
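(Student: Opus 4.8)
The plan is to show that $x\ra y$ commutes with every element of $X$. Fix $z\in X$; by Proposition \ref{ciom-20} it is enough to establish the single identity $(x\ra y)\Cap z=z\Cap(x\ra y)$ (equivalently $((x\ra y)\ra z)\ra((x\ra y)\Cap z)=x\ra y$). Before computing, I would list the resources: by Lemma \ref{ciom-30}$(4)$ and Corollary \ref{ciom-70} the elements $x$, $y$, $x^*$, $y^*$, $x\Cup y$, $x\Cap y$ all belong to $\mathcal{Z}(X)$, so the ``central'' distributive laws of Theorem \ref{ciom-100} and Lemma \ref{ciom-110} are applicable whenever the non-central slot is occupied by $z$, or by a compound term such as $(z\Cap x)^*$ or $(z\ra x)^*$ (whose commutativity with the remaining factors is guaranteed by Lemma \ref{ciom-30}$(3)$); moreover, since $x$ and $y$ commute, the identity $x\ra(x\Cap y)=x\ra y$ already holds by Proposition \ref{gqw-50}$(3)$, and by Proposition \ref{gqw-50}$(8)$,$(9)$ one may replace $x\ra y$ by $(x\Cup y)\ra y$ with $y\le_Q x\Cup y$.

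For the computation itself I would proceed as follows. Put $a:=x\Cup y$ and $b:=y$, both in $\mathcal{Z}(X)$ with $b\le_Q a$, so that it suffices to prove $(a\ra b)\Cap z=z\Cap(a\ra b)$. Unfold $(a\ra b)\Cap z$ and $z\Cap(a\ra b)$ using Proposition \ref{qbe-30}$(8)$ together with the involutive identities of Lemma \ref{qbe-10}$(3)$,$(6)$,$(7)$,$(8)$ and $(BE_4)$; wherever an $\odot$-term arises (for instance via $(a\ra b)\odot a=b$, Proposition \ref{gqw-30-10}$(5)$) rewrite it back in terms of $\ra$. Then distribute $\ra$ over the occurring $\Cap$'s by Theorem \ref{ciom-100}$(2)$ / Lemma \ref{ciom-110} — legitimate precisely because $a$, $b$, $a^*$, $b^*$ lie in $\mathcal{Z}(X)$ — and simplify using the IOM identities of Propositions \ref{gqw-50}, \ref{gqw-50-10} and \ref{gqw-30-10} (monotonicity of $\ra$ and $\Cap$ with respect to $\le_Q$, $(x\Cup y)\ra y=x\ra y$, $x\Cap(y\ra x)=x$, and so on). This yields the two inequalities $(a\ra b)\Cap z\le_Q z\Cap(a\ra b)$ and $z\Cap(a\ra b)\le_Q(a\ra b)\Cap z$; antisymmetry of $\le_Q$ (Proposition \ref{qbe-20}$(2)$) then gives equality, hence $(x\ra y)\,\mathcal{C}\,z$, and since $z$ was arbitrary, $x\ra y\in\mathcal{Z}(X)$.

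The main obstacle will be this middle computation. Because $\Cap$ is not commutative and $\le$ is merely a preorder, every estimate has to be lifted to a $\le_Q$-estimate before antisymmetry can be invoked, and every use of a distributive law must be preceded by a short but non-automatic check that the three elements involved pairwise commute; this is exactly why the preliminary reductions to the central elements $a$, $b$, $a^*$, $b^*$ and the auxiliary commutations of Lemma \ref{ciom-30}$(3)$ are needed. Once Proposition \ref{ciom-120} is available, combining it with Lemma \ref{ciom-30}$(4)$ shows that $\mathcal{Z}(X)$ is closed under $\ra$ and $^*$, hence is a subalgebra of $X$ (as an involutive BE algebra) which, being commutative by the very definition of $\mathcal{Z}(X)$, is a Wajsberg subalgebra of $X$.
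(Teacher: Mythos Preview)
Your proposal correctly identifies the framework --- reduce to a commutativity statement via Proposition~\ref{ciom-20}, then exploit the central distributive laws of Theorem~\ref{ciom-100} and Lemma~\ref{ciom-110} --- but the ``middle computation'' that you flag as the main obstacle is in fact the entire content of the proof, and you have not carried out any of it. Saying ``unfold, distribute, simplify, obtain two $\le_Q$-inequalities'' is not a proof: the actual argument requires a specific chain of non-obvious identities (for instance Proposition~\ref{gqw-50-10}$(11)$, Proposition~\ref{gqw-50-10}$(12)$, Proposition~\ref{qbe-30}$(10)$) applied in a particular order, none of which appear in your outline. Without these, there is no reason to believe that the generic simplifications you describe terminate in the desired inequalities.

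Two further points. First, your reduction to $a=x\Cup y$, $b=y$ with $b\le_Q a$ is valid but does not visibly simplify the task; the paper instead works with $x^*\ra y$ and only at the very end replaces $x^*$ by $x$. Second, the paper does not proceed via two $\le_Q$-inequalities at all: it uses criterion $(c)$ of Proposition~\ref{ciom-20} directly, computing $(z\ra(x^*\ra y))\ra(z\Cap(x^*\ra y))$ step by step and showing it equals $z$. This is a single chain of equalities, each justified by one of the lemmas above, and it avoids the difficulty you anticipate of having to lift $\le$-estimates to $\le_Q$-estimates. Your plan is not wrong in spirit, but as written it is a description of the \emph{shape} of a proof rather than a proof, and the shape you describe (two inequalities) is not the one that actually succeeds most cleanly.
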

\begin{proof} 
Based on the previous results, we have: \\
$\hspace*{0.50cm}$ $(z\ra (x^*\ra y))\ra (z\Cap (x^*\ra y))$ \\
$\hspace*{2.00cm}$ $=((z\ra x)^*\ra y)\ra (z\Cap (x^*\ra y))$ (Lemma \ref{qbe-10}$(7)$) \\
$\hspace*{2.00cm}$ $=((z\ra x)^*\ra y)\ra (z\Cap ((y^*\ra z)\Cap (x^*\ra y))$ (Prop. \ref{gqw-50-10}$(11)$) \\
$\hspace*{2.00cm}$ $=((z\ra x)^*\ra y)\ra (z\Cap ((z\Cap x)^*\ra y))$ (Lemma \ref{ciom-110}$(2)$) \\
$\hspace*{2.00cm}$ $=((z\ra x)^*\ra y)\ra ((z\Cup (z\ra x)^*)\Cap ((z\Cap x)^*\ra y))$ 
                                                                           (Prop. \ref{gqw-50-10}$(12)$) \\
$\hspace*{2.00cm}$ $=((z\ra x)^*\ra y)\ra (((x\Cap z)^*\ra (z\ra x)^*)\Cap ((z\Cap x)^*\ra y))$ 
                                                                          (Prop. \ref{qbe-30}$(10)$) \\
$\hspace*{2.00cm}$ $=((z\ra x)^*\ra y)\ra (((z\Cap x)^*\ra (z\ra x)^*)\Cap ((z\Cap x)^*\ra y))$ 
                                                                          (since $x\mathcal{C} z$) \\
$\hspace*{2.00cm}$ $=((z\ra x)^*\ra y)\ra ((z\Cap x)^*\ra ((z\ra x)^*\Cap y))$ (Lemma \ref{ciom-110}$(1)$) \\
$\hspace*{2.00cm}$ $=((z\ra x)^*\ra y)\ra (((z\ra x)^*\Cap y)^*\ra (z\Cap x))$ \\
$\hspace*{2.00cm}$ $=(((z\ra x)^*\ra y)\ra ((z\ra x)^*\Cap y))^*\ra (z\Cap x)$ (Lemma \ref{qbe-10}$(7)$) \\
$\hspace*{2.00cm}$ $=((z\ra x)^*)^*\ra (z\Cap x)$ (Lemma \ref{ciom-110}$(3)$) \\
$\hspace*{2.00cm}$ $=(z\ra x)\ra (z\Cap x)$ \\ 
$\hspace*{2.00cm}$ $=z$ (Lemma \ref{ciom-110}$(3)$). \\
From $(z\ra (x^*\ra y))\ra (z\Cap (x^*\ra y))=z$, applying Proposition \ref{ciom-20}$(3)$, it follows that 
$(x^*\ra y))\mathcal{C} z$, for any $z\in X$, hence $x^*\ra y\in \mathcal{Z}(X)$. \\
Since $x\in \mathcal{Z}(X)$ implies $x^*\in \mathcal{Z}(X)$, we have $x\ra y=(x^*)^*\ra y\in \mathcal{Z}(X)$. 
\end{proof}

\begin{theorem} \label{ciom-130} $(\mathcal{Z}(X),\ra,^*,1)$ is a Wajsberg subalgebra of $X$.    
\end{theorem}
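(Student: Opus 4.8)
The plan is to verify the three defining data of a Wajsberg algebra on $\mathcal{Z}(X)$: that $\mathcal{Z}(X)$ is a subalgebra (closed under $\ra$, $^*$, and contains $1$), and that the restricted operations satisfy axioms $(W_1)$–$(W_4)$. Most of the hard work is already done: Lemma~\ref{ciom-30}$(4)$ gives closure under $^*$, Proposition~\ref{ciom-120} gives closure under $\ra$, and $0,1\in\mathcal{Z}(X)$ was noted right after Definition~\ref{ciom-10}. So $(\mathcal{Z}(X),\ra,^*,1)$ is a well-defined substructure of the ambient involutive BE algebra.

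Next I would check the axioms. Axiom $(W_1)$, $1\ra x=x$, is just $(BE_3)$ inherited from $X$. Axiom $(W_4)$, $(x^*\ra y^*)\ra (y\ra x)=1$, holds in any involutive BE algebra since $x^*\ra y^*=y\ra x$ by Lemma~\ref{qbe-10}$(6)$, so the left side is $(y\ra x)\ra(y\ra x)=1$. The two substantive axioms are $(W_3)$, $(x\ra y)\ra y=(y\ra x)\ra x$, i.e. $x\Cup y=y\Cup x$, and $(W_2)$, the transitivity-type identity $(y\ra z)\ra((z\ra x)\ra(y\ra x))=1$. For $(W_3)$: if $x,y\in\mathcal{Z}(X)$ then $x\,\mathcal{C}\,y$ by definition of the commutative center, which is exactly $x\Cup y=y\Cup x$; done.

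The remaining point is $(W_2)$ for elements of $\mathcal{Z}(X)$. Here the key is Theorem~\ref{ciom-100}: for $x,y,z\in\mathcal{Z}(X)$ all three commute pairwise, so $(1)$ gives $(y\Cup z)\ra x=(y\ra x)\Cap(z\ra x)$ and $(2)$ gives $x\ra(y\Cap z)=(x\ra y)\Cap(x\ra z)$ — the same distribution laws that hold in a Wajsberg algebra (cf. Proposition~\ref{cgqw-20}$(10)$). Using these together with Proposition~\ref{gqw-50}, Proposition~\ref{gqw-50-10} and the antisymmetry of $\le_Q$ on $\mathcal{Z}(X)$, one shows $\le$ restricted to $\mathcal{Z}(X)$ coincides with $\le_Q$ and is a genuine partial order, so that the computation proving $(W_2)$ goes through exactly as it does for quantum-Wajsberg algebras that happen to be commutative; alternatively, having $(W_1)$, $(W_3)$, $(W_4)$ and the distributivity of $\ra$ over $\Cap$ on $\mathcal{Z}(X)$, one invokes that a commutative quantum-Wajsberg algebra is a Wajsberg algebra (the discussion preceding Lemma~\ref{cgqw-10}, together with Corollary~\ref{cgqw-40-20}) applied to the subalgebra $\mathcal{Z}(X)$: since $x\,\mathcal{C}\,y$ for all $x,y\in\mathcal{Z}(X)$, the subalgebra is commutative as a BE algebra, hence a Wajsberg algebra.

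The main obstacle I anticipate is not any single identity but the bookkeeping of commutativity hypotheses: every application of Theorem~\ref{ciom-100} and Proposition~\ref{ciom-20} requires knowing that the specific compound terms appearing commute, and one must repeatedly invoke Lemma~\ref{ciom-30}$(1)$ (closure of $\mathcal{C}$ under $^*$ on pairs) and Corollaries~\ref{ciom-70}, the fact that $\mathcal{Z}(X)$ is closed under $\Cup,\Cap,^*,\ra$, to stay inside $\mathcal{Z}(X)$. Once one observes that $\mathcal{Z}(X)$ is closed under all the operations in sight and that any two of its elements commute, the cleanest route is simply: $\mathcal{Z}(X)$ is a commutative BE algebra (bounded, since $0,1\in\mathcal{Z}(X)$), hence by the term-equivalence of bounded commutative BE algebras with Wajsberg algebras recalled at the start of Section~4, $(\mathcal{Z}(X),\ra,^*,1)$ is a Wajsberg algebra, and it is a subalgebra of $X$ by the closure properties. $\hfill\square$
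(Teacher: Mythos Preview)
Your proposal is correct and, in its final ``cleanest route,'' coincides with the paper's argument: closure of $\mathcal{Z}(X)$ under $\ra$ (Proposition~\ref{ciom-120}) and $^*$ (Lemma~\ref{ciom-30}$(4)$), together with $0,1\in\mathcal{Z}(X)$ and the built-in commutativity $x\Cup y=y\Cup x$, make $\mathcal{Z}(X)$ a bounded commutative BE subalgebra, hence a Wajsberg algebra by the discussion opening Section~4. The direct verification of $(W_1)$--$(W_4)$ and the appeal to Theorem~\ref{ciom-100} that you sketch first are unnecessary detours; the paper goes straight to the commutativity-plus-closure argument.
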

\begin{proof} 
By Proposition \ref{ciom-120}, $\mathcal{Z}(X)$ is closed under $\ra$. 
Moreover $0,1\in \mathcal{Z}(X)$, hence it is a quantum-Wajsberg subalgebra of $X$. 
Since by Corollary \ref{ciom-70}, $x,y\in \mathcal{Z}(X)$ implies $x\Cup y=y\Cup x$, 
$(\mathcal{Z}(X),\ra,^*,1)$ is a commutative quantum-Wajsberg algebra, that is a Wajsberg subalgebra of $X$.  
\end{proof}

Taking into consideration the above results, the commutative center $\mathcal{Z}(X)$ will be also called the 
\emph{Wajsberg-center} of $X$. 

\begin{corollary} \label{ciom-140} A quantum-Wajsberg algebra $X$ is a Wajsberg algebra if and only if  $\mathcal{Z}(X)=X$.    
\end{corollary}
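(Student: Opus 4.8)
The plan is to unwind both directions to the level of the defining identity $x\Cup y=(x\ra y)\ra y$ and then invoke the results of Section 4.

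For the forward implication, suppose $X$ is a Wajsberg algebra. Then $X$ is commutative, so $(x\ra y)\ra y=(y\ra x)\ra x$ for all $x,y\in X$; equivalently $x\Cup y=y\Cup x$, i.e. $x\,\mathcal{C}\,y$ for every pair $x,y\in X$. Hence every element of $X$ lies in $\mathcal{Z}(X)$, giving $\mathcal{Z}(X)=X$. (One may equally cite Proposition \ref{cgqw-30} together with Definition \ref{ciom-10}.)

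For the converse, assume $\mathcal{Z}(X)=X$. Then $x\,\mathcal{C}\,y$ holds for all $x,y\in X$, that is $x\Cup y=y\Cup x$, which by the definition of $\Cup$ reads $(x\ra y)\ra y=(y\ra x)\ra x$. Thus $X$ is a commutative involutive BE algebra; in particular $\le$ is antisymmetric, since $x\le y$ and $y\le x$ give $x\ra y=y\ra x=1$ and hence $x=1\ra x=(y\ra x)\ra x=(x\ra y)\ra y=1\ra y=y$. Since a quantum-Wajsberg algebra whose order $\le$ is antisymmetric is a Wajsberg algebra by Corollary \ref{cgqw-40-20}, we conclude that $X$ is a Wajsberg algebra. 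Alternatively, one invokes directly that bounded commutative BE algebras are term-equivalent to Wajsberg algebras (Section 4), so the commutative quantum-Wajsberg algebra $X$ is a Wajsberg algebra.

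There is no genuine obstacle here; the corollary is essentially a restatement of the fact, already recorded in Section 4 and underlying Theorem \ref{ciom-130}, that commutativity of a quantum-Wajsberg algebra is precisely the condition $\mathcal{Z}(X)=X$. The only point that needs a moment's care is the translation of ``$x\,\mathcal{C}\,y$ for all $x,y$'' into the BE-algebra commutativity identity via $x\Cup y=(x\ra y)\ra y$, after which the conclusion follows from the cited results.
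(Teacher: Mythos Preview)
Your proof is correct. The paper leaves this corollary without an explicit proof; the intended argument for the forward direction is exactly yours, while for the converse the paper presumably expects the reader to invoke Theorem~\ref{ciom-130} directly: since $\mathcal{Z}(X)$ is always a Wajsberg subalgebra of $X$, the hypothesis $\mathcal{Z}(X)=X$ immediately makes $X$ itself a Wajsberg algebra. Your detour through antisymmetry of $\le$ and Corollary~\ref{cgqw-40-20} (or, alternatively, the term-equivalence remarks opening Section~4) is equally valid and entirely self-contained, just marginally less direct than citing the theorem that immediately precedes the corollary.
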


$\vspace*{5mm}$

\section{Logical aspects}
In this section we investigate certain properties of logics based on the new algebras introduced in the paper. 

\S{\bf 1.} The behavior of physical systems which describe the events concerning elementary particles is different 
from that of physical systems observed in classical physics.
The quantum effects for a physical system are usually described by the set $\mathcal{E}(H)$ of all self-adjoint 
operators between null operator $\mathbb{O}$ and identity operator $\mathbb{I}$ on $H$. 
The set $\mathcal{E}(H)$ of Hilbert space effects is the prototype of the abstract definition of effect algebras 
which were introduced by R. Giuntini and H. Greuling under the name of weak orthoalgebras (\cite{Giunt8}), as an algebraic axiomatisation of the logic of quantum mechanics (see also \cite{Foulis}). 

\begin{definition} \label{liom-10} $\rm($\cite[Def. 1.2.1]{DvPu}$\rm)$ \emph{
An \emph{effect algebra} is a system $(E,\oplus,0,1)$ consisting of a set $E$ with two special elements 
$0,1\in E$, called the \emph{zero} and \emph{unit}, and with a partially defined operation $\oplus$ satisfying the following conditions for all $x,y,z\in E$: \\
$(E_1)$ (Commutative law) If $x\oplus y$ is defined, then $y\oplus x$ is defined and $x\oplus y=y\oplus x;$ \\
$(E_2)$ (Associative law) If $y\oplus z$ and $x\oplus (y\oplus z)$ are defined, then $x\oplus y$ and 
$(x\oplus y)\oplus z$ are defined, and $(x\oplus y)\oplus z=x\oplus (y\oplus z);$ \\
$(E_3)$ (Orthosupplement law) For each $x\in E$ there exists a unique $y\in E$ such that $x\oplus y$ is defined 
and $x\oplus y=1;$ \\
$(E_4)$ (Zero-one law) If $x\oplus 1$ is defined, then $x=0$. 
}
\end{definition}

\begin{proposition} \label{liom-20} Let $(X,\ra,^*,0,1)$ be an implicative-orthomodular algebra. 
Define the operation $\oplus$ on $X$ by $x\oplus y=x^*\ra y$, whenever $x\le_Q y^*$. 
Then the structure $(X,\oplus,0,1)$ is an effect algebra. 
\end{proposition}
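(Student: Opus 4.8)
The plan is to verify directly the four effect-algebra axioms $(E_1)$--$(E_4)$ for the partial operation $x\oplus y=x^*\ra y$ with domain $\{(x,y):x\le_Q y^*\}$, using throughout that $\le_Q$ is a partial order on $X$ (reflexive and antisymmetric by Proposition \ref{qbe-20}$(2)$, transitive by Proposition \ref{gqw-50-10}$(7)$), that $x\le_Q y^*$ is equivalent to $y\le_Q x^*$ (Proposition \ref{gqw-30-10}$(2)$), and the basic identities of Lemma \ref{qbe-10}$(5),(7)$.

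For $(E_1)$ I would argue: if $x\le_Q y^*$ then $y\le_Q x^*$ by Proposition \ref{gqw-30-10}$(2)$, so $y\oplus x$ is defined, and $x^*\ra y=y^*\ra x$ (Lemma \ref{qbe-10}$(5)$) gives $x\oplus y=y\oplus x$. For $(E_3)$, the element $x^*$ witnesses existence, since $x\le_Q x^{**}=x$ and $x^*\ra x^*=1$; for uniqueness, if $x\le_Q y^*$ and $x^*\ra y=1$ then $x^*\le y$, hence $y\Cap x^*=x^*$ by Proposition \ref{gqw-50}$(6)$, while $y\le_Q x^*$ gives $y=y\Cap x^*$, so $y=x^*$. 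For $(E_4)$: $x\oplus 1$ is defined iff $x\le_Q 0$, and since $0\le_Q x$ by Proposition \ref{qbe-20}$(5)$, antisymmetry forces $x=0$.

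The heart of the proof is the associativity axiom $(E_2)$. Assume $y\oplus z$ and $x\oplus(y\oplus z)$ are defined, i.e. $y\le_Q z^*$ and $x\le_Q(y^*\ra z)^*$. First, $(y^*\ra z)^*\le_Q y^*$ by Proposition \ref{gqw-50}$(5)$, so $x\le_Q y^*$ by transitivity and $x\oplus y$ is defined. Next I must show $(x\oplus y)\oplus z$ is defined, i.e. $x^*\ra y\le_Q z^*$: from $x\le_Q(y^*\ra z)^*$ and Proposition \ref{gqw-30-10}$(2)$ we get $y^*\ra z\le_Q x^*$, whence Proposition \ref{gqw-30-10}$(3)$ yields $x^*\ra y\le_Q(y^*\ra z)\ra y$; but $(y^*\ra z)\ra y=(z^*\ra y)\ra y=z^*\Cup y=z^*$, the last equality by Proposition \ref{gqw-30-10}$(2)$ applied to $y\le_Q z^*$. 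Finally the two values agree: $(x\oplus y)\oplus z=(x^*\ra y)^*\ra z=x^*\ra(y^*\ra z)=x\oplus(y\oplus z)$ by Lemma \ref{qbe-10}$(7)$.

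I expect the delicate point to be exactly this definedness of $(x\oplus y)\oplus z$. It does not follow from $x\le_Q y^*$, $y\le_Q z^*$, $x\le_Q z^*$ alone (one can see this already in the standard $[0,1]$ example), and one genuinely needs the monotonicity of the implication with respect to $\le_Q$ from Proposition \ref{gqw-30-10}$(3)$ to convert the "triple orthogonality" encoded in $x\le_Q(y^*\ra z)^*$ into the required relation $x^*\ra y\le_Q z^*$; everything else is bookkeeping with the order $\le_Q$ and the implication identities of Lemma \ref{qbe-10}.
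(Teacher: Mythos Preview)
Your proof is correct and follows essentially the same route as the paper's: the verification of $(E_1)$, $(E_2)$, $(E_4)$ is identical in substance (you cite Proposition~\ref{gqw-50}$(5)$ and Proposition~\ref{gqw-30-10}$(2)$ where the paper uses equivalent facts from Proposition~\ref{gqw-50-10}$(2)$), and the delicate definedness step in $(E_2)$ is handled exactly as in the paper via monotonicity of $\ra$ with respect to $\le_Q$. The only genuine variation is in the uniqueness part of $(E_3)$: the paper invokes the cancellation law (Proposition~\ref{qbe-30}$(1)$) for arbitrary sums, whereas you give a direct argument specific to the value $1$ using Proposition~\ref{gqw-50}$(6)$; both are valid.
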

\begin{proof} We check axioms $(E_1)$-$(E_4)$ from the definition of an effect algebra. \\
$(E_1)$ Since $x\oplus y$ is defined, then $x\le_Q y^*$ and $y\le_Q x^*$, that is $y\oplus y$ is also defined. 
Obviously, $x\oplus y=x^*\ra y=y^*\ra x=y\oplus x$. \\
$(E_2)$ Assume $y\oplus z$ and $x\oplus (y\oplus z)$ are defined, so that $y\le_Q z^*$ and 
$x\le (y\oplus z)^*=(y^*\ra z)^*$. 
From $y\le_Q y^*\ra z$, we get $(y^*\ra z)^*\le_Q y^*$, hence $x\le_Q y^*$, that is $x\oplus y$ is defined. 
Moreover, $x\le_Q (y^*\ra z)^*$ implies $y^*\ra z\le_Q x^*$, so that $x^*\ra y\le_Q(y^*\ra z)\ra y=z^*\Cup y$. 
Since $y\le z^*$, we have $y=y\Cap z^*$, and applying Proposition \ref{gqw-50-10}$(2)$, we get  
$x^*\ra y\le_Q z^*\Cup y=z^*\Cup (y\Cap z^*)=z^*$. Hence $x^*\ra y\le z^*$, that is $(x\oplus y)\oplus z$ is 
defined. 
Finally, using Lemma\ref{qbe-10}$(7)$, we have 
$(x\oplus y)\oplus z=(x^*\ra y)^*\ra z=x^*\ra (y^*\ra z)=x\oplus (y\oplus z)$. \\
$(E_3)$ Taking $y:=x^*$, we have $x\le_Q x=(x^*)^*=y^*$, that is $x\oplus y$ is defined, and 
$x\oplus y=x^*\ra x^*=1$. 
Assume that there exists another $y_1\in E$ such that $x\oplus y_1$ is defined and $x\oplus y=x\oplus y_1$. 
From $x\le_Q y^*$ and $x\le_Q y_1^*$, we have $y,y_1\le_Q x^*$, while $x\oplus y=x\oplus y_1$ implies 
$x^*\ra y=x^*\ra y_1$. Applying the cancellation law, it follows that $y=y_1$. \\
$(E_4)$ If $x\oplus 1$ is defined, then $x\le_Q 1^*=0$, that is $x=0$. 
\end{proof}

\begin{remark} \label{liom-30} The meta-Wajsberg algebras and the pre-Wajsberg algebras do not have corresponding  effect algebras. \\
Indeed, let $X$ be the meta-Wajsberg algebra from Example \ref{gqw-180}. 
We have $b\le_Q d^*$, so that $b\oplus d$ exists, while $d\nleq_Q b^*$, that is $d\oplus b$ does not exist. 
It follows that $(X,\oplus,0,1)$ is not an effect algebra. 
Moreover, by Proposition \ref{gqw-20} any pre-Wajsberg algebra is a meta-Wajsberg algebra, so that, in general, 
the meta-Wajsberg algebras do not have corresponding  effect algebras. 
\end{remark}

\begin{corollary} \label{liom-40} 
$(1)$ An implicative-orthomodular algebra can be an algebraic model for the logic of quantum mechanics  
(within the framework of effects in a Hilbert space); \\
$(2)$ The pre-Wajsberg and meta-Wajsberg algebras are not algebraic models for the logic of quantum mechanics. 
\end{corollary}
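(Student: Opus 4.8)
The plan is to read off both statements as immediate consequences of the effect-algebra correspondence established just above. For part $(1)$, recall from the discussion preceding Definition \ref{liom-10} that effect algebras were introduced precisely as the abstract algebraic axiomatization of the set $\mathcal{E}(H)$ of Hilbert space effects, i.e.\ as the logic of quantum mechanics in the framework of effects. Hence it suffices to invoke Proposition \ref{liom-20}: every implicative-orthomodular algebra $(X,\ra,^*,0,1)$ carries an effect-algebra structure $(X,\oplus,0,1)$ via $x\oplus y:=x^*\ra y$, defined whenever $x\le_Q y^*$. Therefore an IOM algebra can serve as an algebraic model for the logic of quantum mechanics, which is exactly statement $(1)$.

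For part $(2)$, the plan is to exhibit the failure of the very same construction. By Remark \ref{liom-30}, the partial operation $x\oplus y=x^*\ra y$ is not symmetrically well defined on a meta-Wajsberg algebra: in the algebra of Example \ref{gqw-180} one has $b\le_Q d^*$ but $d\nleq_Q b^*$, so $b\oplus d$ exists while $d\oplus b$ does not, which contradicts the commutativity axiom $(E_1)$; hence the associated $(X,\oplus,0,1)$ is not an effect algebra. Since by Proposition \ref{gqw-20} every pre-Wajsberg algebra is a meta-Wajsberg algebra, the same obstruction applies to the pre-Wajsberg case as well. Consequently neither the pre-Wajsberg nor the meta-Wajsberg algebras admit the effect-algebra structure, and so they are not algebraic models for the logic of quantum mechanics within the Hilbert-space-effects framework, which is statement $(2)$.

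In effect, the corollary is bookkeeping: all the real content sits in Proposition \ref{liom-20} and in Remark \ref{liom-30}, both of which are already in hand. I expect the main obstacle — such as it is — to be conceptual rather than computational, namely the step of identifying ``effect algebra'' with the intended meaning of ``algebraic model for the logic of quantum mechanics''; this is justified precisely by the historical/definitional remarks opening this section, together with the reference to \cite{Giunt8, Foulis}. The one technically nontrivial ingredient, the verification of the associativity axiom $(E_2)$ in Proposition \ref{liom-20} (ensuring that definedness of $x\oplus(y\oplus z)$ forces definedness of $(x\oplus y)\oplus z$ with equality, via the $\le_Q$-manipulations, Proposition \ref{gqw-50-10}$(2)$, and Lemma \ref{qbe-10}$(7)$), has already been carried out, so nothing further is required for the corollary.
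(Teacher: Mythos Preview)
Your proposal is correct and matches the paper's approach: the corollary carries no proof in the paper and is meant to be read off directly from Proposition~\ref{liom-20} and Remark~\ref{liom-30}, exactly as you do.

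One small logical slip is worth fixing. In part~$(2)$ you write that ``since every pre-Wajsberg algebra is a meta-Wajsberg algebra, the same obstruction applies to the pre-Wajsberg case as well.'' That inference runs the wrong way: from $\mathrm{pre\text{-}W}\subseteq\mathrm{meta\text{-}W}$ and a counterexample in $\mathrm{meta\text{-}W}$ you cannot conclude there is one in $\mathrm{pre\text{-}W}$, since the witness might lie in $\mathrm{meta\text{-}W}\setminus\mathrm{pre\text{-}W}$. The repair is immediate and already implicit in what you cite: Example~\ref{gqw-180} is in fact a \emph{pre}-Wajsberg algebra (that is how it is introduced), so it is itself the pre-Wajsberg counterexample; the inclusion of Proposition~\ref{gqw-20} then shows it is simultaneously a meta-Wajsberg counterexample. (The paper's Remark~\ref{liom-30} phrases this step just as loosely, so you are in good company, but the clean direction is the one just stated.)
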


\S{\bf 2.} According to Birkhoff's HSP theorem (\cite{Birk}), a class of algebraic structures wih the same signature 
is a variety if and only if it is closed under \emph{HSP operations} ({\bf H}omomorphism, {\bf S}ubalgebra, 
{\bf P}roduct). 
If $\mathbb{K}$ is a class of algebraic structures of the same type, the variety generated by $\mathbb{K}$ will be denoted by $HSP(\mathbb{K})$ (\cite{Giunt3}).  
Denote by $\mathbb{QW}$, $\mathbb{PW}$, $\mathbb{MW}$ and $\mathbb{IOM}$ the classes of all quantum-Wajsberg, pre-Wajsberg, meta-Wajsberg and implicative-orthomodular algebras, respectively. 
The logics based on $\mathbb{QW}$, $\mathbb{PW}$, $\mathbb{MW}$, $\mathbb{IOM}$ are clearly axiomatizable, since 
$HSP(\mathbb{QW})=\mathbb{QW}$, $HSP(\mathbb{PW})=\mathbb{PW}$, $HSP(\mathbb{MW})=\mathbb{MW}$, $HSP(\mathbb{IOM})=\mathbb{IOM}$. \\

\S{\bf 3.} According to \cite[Th. 4.7]{Blok}, a logic  $\mathcal{S}$ over the language $\mathcal{L}$ is \emph{algebraizable} if it verifies the following properties, for all formulas 
$\varphi, \phi, \vartheta, \varphi_1, \phi_1$ in $\mathcal{L}$: \\
$(i)$ $\vdash\varphi\Delta\varphi$, \\
$(ii)$ $\varphi\Delta\phi\vdash\phi\Delta\varphi$, \\
$(iii)$ $\varphi\Delta\phi$,$\phi\Delta\vartheta\vdash\varphi\Delta\vartheta$, \\
$(iv)$ $\varphi\Delta\phi,\varphi_1\Delta\phi_1\vdash(\varphi\ra \varphi_1)\Delta(\phi\ra \phi_1)$, \\
$(v)$ $\varphi\dashv\vdash\varphi\Delta(\varphi\ra \varphi)$, \\
where $\varphi\Delta\phi$ is an abbreviation of $\{\varphi\ra \phi, \phi\ra \varphi\}$ (see also \cite{Kuhr5}). \\
It was proved in \cite{Ciu78} that the class of bounded commutative BCK algebras is a subvariety of the variety of 
quantum-Wajsberg algebras, that is a subvariety of the varieties of pre-Wajsberg, meta-Wajsberg and 
implicative-orthomodular algebras. 
J. $\rm K \ddot{u}hr$ proved in \cite[Prop. 1.3.2]{Kuhr5} that the BCK logic is algebraizable in the sense of \cite{Blok}. 
As a consequence, the logic based on bounded commutative BCK algebras (i.e., Wajsberg algebras) is algebraizable. 
For the logics based on quantum-Wajsberg, pre-Wajsberg, meta-Wajsberg and implicative-orthomodular algebras, we leave as an open problem whether they are algebraizable or not.

$\vspace*{5mm}$

\section{Concluding remarks}

In this paper, we redefined the notions of pre-MV and meta-MV algebras, and we introduced and studied the pre-Wajsberg and meta-Wajsberg algebras. 
We established certain connections between these structures: every pre-Wajsberg algebra is a meta-Wajsberg algebra, 
and an implicative-orthomodular algebra is a quantum-Wajsberg algebra if and only if it is a meta-Wajsberg algebra. 
Other connections were proved based on the implicative condition of BE algebras. \\ 
A. Iorgulescu clarified in \cite{Ior32} some aspects concerning the quantum-MV algebras as non-idempotent generalizations of orthomodular lattices: she studied the orthomodular lattices and she introduced and studied two generalizations of them, the orthomodular softlattices and the orthomodular widelattices. 
More precisely, an involutive m-BE algebra $(X,\odot,^*,1)$ is called (\cite[Def. 3.2, 4.2, 4.19]{Ior32}): \\
$\hspace*{0.50cm}$ $-$ an \emph{orthomodular lattice} if it satisfies: \\
$\hspace*{0.85cm}$ $(Pom)$ $(x\odot y)\oplus ((x\odot y)^*\odot x)=x$, or, equivalently, $x\Cup (x\odot y)=x$, \\
$\hspace*{0.85cm}$ $(m$-$Pimpl)$ $((x\odot y^*)^*\odot x^*)^*=x;$ \\
$\hspace*{0.50cm}$ $-$ an \emph{orthomodular softlattice} if it satisfies $(Pom)$ and: \\
$\hspace*{0.85cm}$ $(G)$ $x\odot x=x;$ \\
$\hspace*{0.50cm}$ $-$ an \emph{orthomodular widelattice} if it satisfies $(Pom)$ and: \\
$\hspace*{0.85cm}$ $(m$-$Pabs$-$i)$ $x\odot (x\oplus x\oplus x\oplus y)=x$. \\ 
We redefined the orthomodular lattices, by introducing the notion of \emph{implicative-orthomodular lattices} as involutive BE algebras satisfying conditions $(QW_2)$ and $(Pimpl)$, or, equivalently, implicative-orthomodular algebras satisfying condition $(Pimpl)$. \\
In a future work, we will define the notions of \emph{implicative-orthomodular softlattices} and 
\emph{implicative-orthomodular widelattices}, and we will investigate their connections to quantum-Wajsberg 
algebras. \\ 
{\bf Open problem.} Investigate the algebraizability and protoalgebraizability (in the sense of \cite{Blok, Font2}) of logics based on quantum-Wajsberg, pre-Wajsberg, meta-Wajsberg and implicative-orthomodular algebras. \\

$\vspace*{1mm}$
          
\begin{center}
\sc Acknowledgement 
\end{center}
The author is very grateful to the anonymous referees for their useful remarks and suggestions on the subject that helped improving the presentation.

$\vspace*{1mm}$


\begin{thebibliography}{99}

\bibitem{Beran} L. Beran, \emph{Orthomodular Lattices: Algebraic Approach. Mathematics and its Applications}, 
Springer, Netherland, 1985.

\bibitem{Birk} G. Birkhoff, \emph{On the structure of abstract algebras}, Proc. Camb. Philos. Soc. {bf 31}(1935), 433--454. 

\bibitem{Blok} W. Blok, D. Pigozzi, \emph{Algebraizable logics}, Memoirs of the Am. Math. Soc., no. 396, 
Providence, 1989.

\bibitem{Chang} C.C. Chang, \emph{Algebraic analysis of many valued logics}, Trans. Amer. Math. Soc. {\bf 88}(1958), 467--490.

\bibitem{Cig1} R. Cignoli, I.M.L. D'Ottaviano, D. Mundici, \emph{Algebraic Foundations of Many-Valued Reasoning},  Kluwer Acad. Publ., Dordrecht, 2000.

\bibitem{Ciu78} L.C. Ciungu, \emph{Quantum-Wajsberg algebras}, arxiv.org:2303.16481v2. 

\bibitem{Ciu81} L.C. Ciungu, \emph{Implicative-orthomodular algebras}, Bull. Belg. Math. Soc. - Simon Stevin  {\bf 30}(2023), 510--531.  

\bibitem{DvPu} A. Dvure\v censkij, S. Pulmannov\'a, \emph{New trends in Quantum Structures}, Kluwer Academic Publishers, Dordrecht, Ister Science, Bratislava, 2000.

\bibitem{Font1} J.M. Font, A.J. Rodriguez, A. Torrens, \emph{Wajsberg algebras}, Stochastica {\bf 8(1)}(1984), 
5--31.

\bibitem{Font2} J.M. Font, R. Jansana, D. Pigozzi, \emph{A survey of abstract algebraic logic}, 
Studia logica {\bf 74(1--2)}(2003), 13--97.

\bibitem{Foulis} D.J. Foulis, M.K. Bennett, \emph{Effect algebras and unsharp quantum logic}, Found. Phys. 
{\bf 24}(1994), 1331--1352.

\bibitem{Giunt8} R. Giuntini, H. Greuling, \emph{Toward a formal language for unsharp properties}, Found. Phys. 
{\bf 19}(1989), 931--945.

\bibitem{Giunt1} R. Giuntini, \emph{Quasilinear QMV algebras}, Inter. J. Theor. Phys. {\bf 34}(1995), 1397--1407.

\bibitem{Giunt2} R. Giuntini, \emph{Quantum MV-algebras}, Studia Logica {\bf 56}(1996), 393--417. 

\bibitem{Giunt3} R. Giuntini, \emph{Axiomatizing Quantum MV-algebras}, Mathware and Soft Comput. {\bf 4}(1997), 23--39. 

\bibitem{Giunt4} R. Giuntini, \emph{Quantum MV-algebras and commutativity}, Inter. J. Theor. Phys. {\bf 37}(1998), 65--74. 

\bibitem{Giunt5} R. Giuntini, \emph{An independent axiomatization of quantum MV-algebras}. In: C. Carola, 
A. Rossi (eds.), The Foundations of Quantum Mechanics, World Scientific, Singapore, 2000, pp. 233--249.

\bibitem{Giunt6} R. Giuntini, \emph{Weakly linear quantum MV-algebras}, Algebra Universalis {\bf 53}(2005), 45--72.

\bibitem{Giunt7} R. Giuntini, S. Pulmannov\'a, \emph{Ideals and congruences in effect algebras and QMV-algebras}, 
Comm. Algebra {\bf 28}(2000), 1567--1592. 

\bibitem{Imai} Y. Imai, K. Is$\rm\acute{e}$ki, On axiom systems of propositional calculi. XIV. 
Proc. Japan Acad. \textbf {42}, 19--22, (1966)

\bibitem{Ior16} A. Iorgulescu, \emph{New generalizations of BCI, BCK and Hilbert algebras - Parts I} (Dedicated
to Drago\c s Vaida), J. of Multiple-Valued Logic and Soft Computing {\bf 27(4)}(2016), 353--406. 

\bibitem{Ior17} A. Iorgulescu, \emph{New generalizations of BCI, BCK and Hilbert algebras - Parts II} (Dedicated
to Drago\c s Vaida), J. of Multiple-Valued Logic and Soft Computing {\bf 27(4)}(2016), 407--456. 

\bibitem{Ior30} A. Iorgulescu, \emph{Algebras of logic vs. algebras}, In Adrian Rezus, editor, Contemporary Logic and Computing, Vol. 1 of Landscapes in Logic, pages 15--258, College Publications, 2020.

\bibitem{Ior31} A. Iorgulescu, \emph{On quantum-MV algebras - Part I: The orthomodular algebras}, Sci. Ann. Comput. Sci. {\bf 31(2)}(2021), 163--221.

\bibitem{Ior32} A. Iorgulescu, \emph{On quantum-MV algebras - Part II: Orthomodular lattices, softlattices and widelattices}, Trans. Fuzzy Sets Syst. {\bf 1(1)}(2022), 1--41.

\bibitem{Ior33} A. Iorgulescu, \emph{On quantum-MV algebras - Part III: The properties (m-Pabs-i) and (WNMm)}, 
Sci. Math. Jpn. {\bf 35}(e-2022), Article 4 - Vol. 86, No. 1, 2023, 49--81. 

\bibitem{Ior34} A. Iorgulescu, M. Kinyon, \emph{Putting quantum-MV algebras on the ``map"}, Sci. Math. Jpn. 
{\bf 34}(e-2021), Article 10 - Vol. 85, No. 2, 2022, 89--115.

\bibitem{Ior35} A. Iorgulescu, \emph{BCK algebras versus m-BCK algebras}. Foundations, Studies in Logic, Vol. 96, 2022. 

\bibitem{Kalm1} G. Kalmbach, \emph{Orthomodular Lattices}, Academic Press, London, New York, 1983.

\bibitem{Kim1} H.S. Kim, Y.H. Kim, \emph{On BE-algebras}, Sci. Math. Jpn. {\bf 66}(2007), 113-116.

\bibitem{Kim2} K.H. Kim, Y.H. Yon, \emph{Dual BCK-algebra and MV-algebra}, Sci. Math. Jpn. {\bf 66}(2007), 247--254.

\bibitem{Kuhr5} J. $\rm K \ddot{u}hr$, \emph{Pseudo-BCK algebras and related structures}, Univerzita 
$\rm Palack\acute{e}ho$ v Olomouci, 2007.

\bibitem{Mund1} D. Mundici, \emph{MV-algebras are categorically equivalent to bounded commutative BCK-algebras}, 
Math. Japonica {\bf 6}(1986), 889--894.

\bibitem{Ptak} P. Pt\'ak, S. Pulmannov\'a, \emph{Orthomodular Structures as Quantum Logics}, 
Veda and Kluwer Acad. Publ., Bratislava and Dordrecht, 1991. 

\bibitem{Walend1} A. Walendziak, \emph{On commutative BE-algebras}, Sci. Math. Jpn. {\bf 69}(2009), 281-284.

\end{thebibliography}
\end{document}